\renewcommand\section{\@startsection {section}{1}{\z@}%
                                   {-3.5ex \@plus -1ex \@minus -.2ex}%
                                   {2.3ex \@plus.2ex}%
                                   {\centering\normalfont\bf}}
 \numberwithin{equation}{section}
\numberwithin{equation}{section}
\numberwithin{equation}{section}
\theoremstyle{plain}
\newtheorem{thm}{Theorem}[section]
\newtheorem{lemma}[thm]{Lemma}
\newtheorem{pro}[thm]{Proposition}
\newtheorem{cor}[thm]{Corollary}
\newtheorem{ex}[thm]{Example}
\newtheorem{de}[thm]{Definition}
\newtheorem{re}[thm]{Remark}
\newtheorem*{co*}{Conjecture}
\newtheorem*{thm*}{Theorem}
\begin{document}
\title{The spectrality of self-affine measure under the similarity transformation of  $GL_n(p)$}
\author{Jing-Cheng Liu, Zhi-Yong Wang$^*$}

\address{Key Laboratory of Computing and Stochastic Mathematics (Ministry of Education), School of Mathematics and Statistics, Hunan Normal University, Changsha, Hunan 410081, P. R. China}

\email{jcliu@hunnu.edu.cn}

\address{College of Mathematics and Computational Science, Hunan First Normal University, Changsha, Hunan 410205, P. R. China }

\email{wzyzzql@163.com}

\date{\today}
\keywords { Orthonormal,  Spectral measure, Sierpinski measure, Similarity transformation.}
\subjclass[2010]{Primary 28A80; Secondary 42C05, 46C05.}
\thanks{ The research is supported in part by the NNSF of China (No. 11831007), the Hunan Provincial NSF (No. 2019JJ20012,  2020JJ5097), the SRF of Hunan Provincial Education Department (Nos. 17B158, 19B117).\\
$^*$Corresponding author.}

\begin{abstract}
Let $\mu_{M,D}$ be the self-affine measure generated by an expanding integer matrix $M\in M_n(\mathbb{Z})$ and  a finite digit set $D\subset\mathbb{Z}^n$. It is well  known that the two measures  $\mu_{M,D}$ and  $\mu_{\tilde{M},\tilde{D}}$ have the same spectrality if $\tilde{M}=B^{-1}MB$ and $\tilde{D}=B^{-1}D$, where $B\in M_n(\mathbb{R})$ is a nonsingular matrix. This fact is usually used to simplify the digit set $D$ or the expanding matrix $M$. However, it often transforms integer digit set $D$ or expanding matrix $M$ into real, which brings many difficulties to study the spectrality of $\mu_{\tilde{M},\tilde{D}}$.  In this paper, we introduce a similarity transformation of  general linear group  $GL_n(p)$ for some self-affine measures, and discuss their spectrality. This kind of similarity transformation can keep the integer properties of  $D$ and $M$ simultaneously, which leads to many advantages in discussing the spectrality of self-affine measures. As an application, we extend some well-known spectral self-affine measures to  more general forms.
\end{abstract}

\maketitle

\section{\bf Introduction\label{sect.1}}

Let $\mu$ be a Borel probability measure with compact support in $\mathbb{R}^n$ and let $\langle\cdot,\cdot\rangle$ denote the standard inner product on $\mathbb{R}^n$. We call $\mu$ a \emph{spectral
measure} if there exists a countable set $\Lambda$ such that
\begin{equation}\label {def:eq_E(A)}
E(\Lambda):=\{e^{2\pi i\langle\lambda,x\rangle} :  \lambda\in\Lambda\}
\end{equation}
forms an orthonormal basis
for $L^2(\mu)$. In this case, the set $\Lambda$ is called a \emph{spectrum} of $\mu$. We also say that a Borel subset $\Omega$ of $\Bbb R^n$ is a \emph{spectral set} if the normalized Lebesgue measure supported on $\Omega$ is a spectral measure. In fact, spectral measure is a generalization of the spectral set, which introduced by Fuglede \cite{Fuglede_1974} in his famous spectrum-tiling conjecture: \emph{$\Omega$ is a spectral set if and only if  $\Omega$ is a translational tile}.  In 2004, Tao \cite{Tao_2004} showed that ``spectral set implies tile'' is false. Subesequently, the conjecture was proved to be false in both directions on $\mathbb{R}^n$ for $n\geq3$ \cite{Kolountzakis-Matolcsi_2006-1, Kolountzakis-Matolcsi_2006-2}. Little is known about the situation in dimension one and two, and the conjecture remains open until now. However,  Greenfeld and Lev proved that Fuglede's conjecture is true for convex polytopes in $\mathbb{R}^3$\cite{Rachel-Lev_2017}. Especially,  Lev and Matolcsi get an amazing progress on the study of Fuglede's conjecture recently, they showed that Fuglede's conjecture for convex domains is true in all dimensions \cite{Lev-Matolcsi_2019}.

The study of spectral measure has entered the fractal field after the pioneer work of Jorgensen and Pedersen \cite{Jorgensen-Pedersen_1996, Jorgensen-Pedersen_1998}, who constructed the first example of singular, non-atomic spectral self-similar measure \cite{Jorgensen-Pedersen_1998}. Their construction is based on a scale $4-$Cantor set, where the first and third intervals are kept and the others are discarded. The results of Jorgensen and Pedersen \cite{Jorgensen-Pedersen_1998}
were further extended to other self-similar/self-affine/moran  fractal measures  (see e.g.\cite{An_Fu_Lai_2019,Dutkay-Han-Sun-2009,Dutkay-Lai_2017,An-He_2014,
An-He-Lau_2015,An-He-Tao_2015,Dai_2012,Dai-He-Lai_2013,Dai-He-Lau_2014,
Deng_2014,Deng_2016,Deng-Lau_2015,Dutkay-Haussermann-Lai_2015,
Dutkay-Jorgensen_2007,Dutkay-Jorgensen_2007_1,Fu-Lai_2018,Hu-Lau_2008,
Laba-Wang_2002,Lagarias-Wang_1997,Lai-Wang_2017,Li_2011,Li_2012,
Li_2008,Li_2015,Liu-Dong-Li_2017,Chen-Liu_2019,Liu-Luo_2017,Wang_2002} and the references therein), and some surprising convergence properties of the associated mock Fourier series were discovered \cite{Dutkay_Han_Sun_2014,Strichartz_2006}.

Let $M\in M_n(\mathbb{R})$ be an expanding matrix (that is, all the eigenvalues of $M$ have moduli $>1$), and $D\subset\mathbb{R}^n$ be a finite digit set of cardinality $\left| D\right|$. It is well known that the \emph{self-affine measure} $\mu:=\mu_{M,D}$ is the unique probability measure satisfying
\begin{equation}\label {eq(1.1)}
\mu=\frac{1}{|D|}\sum_{d\in D}\mu\circ\phi_d^{-1},
\end{equation}
and is supported on the nonempty compact set
$$
T(M,D)=\left\{\sum_{k=1}^\infty M^{-k}d: d\in D\right\},$$
where $T(M,D)$ is the \emph{attractor} (or \emph{self-affine} set) of
the affine IFS $\{\phi_d(x):\phi_d(x)=M^{-1}(x+d)\}_{d\in D}$\cite{Hutchinson_1981}.

The following theorem implies that the spectral properties of $\mu_{M,D}$ are invariant under a similarity transformation, and it is usually used to simplify the digit set $D$ or the  expanding matrix $M$.

\begin{thm}\label{th(1.1)} \cite{Dutkay-Jorgensen_2007}
Let $D,\tilde{D}\subset \Bbb R^n$ be two finite digit sets, and let $M,\tilde{M}\in M_n(\Bbb R)$ be two expanding matrices. If there exists a matrix  $B\in M_n(\mathbb{R})$  such that $\tilde{D}=B^{-1}D$ and $\tilde{M}=B^{-1}MB$, then

{\rm(}i{\rm)}\; $E(\Lambda)$ is an orthogonal set of $L^2(\mu_{M,D})$ if and only if $E(B^*\Lambda)$ is an orthogonal set of $L^2(\mu_{\tilde{M},\tilde{D}})$, where $B^*$ denotes the transposed  conjugate of $B$.

{\rm (}ii{\rm)}\; $\mu_{M,D}$ is a spectral measure with spectrum $\Lambda$ if and only if  $\mu_{\tilde{M},\tilde{D}}$ is a spectral measure with spectrum $B^*\Lambda$.
\end{thm}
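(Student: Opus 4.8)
The plan is to reduce everything to a single change of variables governed by $B$. First I would observe that the hypotheses $\tilde M=B^{-1}MB$ and $\tilde D=B^{-1}D$ make the IFS $\{\tilde\phi_{\tilde d}\}$ for $(\tilde M,\tilde D)$ conjugate to $\{\phi_d\}$ for $(M,D)$ through the linear map $B^{-1}$: since $\tilde M^{-k}=B^{-1}M^{-k}B$ and $\tilde d=B^{-1}d$, one gets $\tilde M^{-k}\tilde d=B^{-1}M^{-k}d$, so that $T(\tilde M,\tilde D)=B^{-1}T(M,D)$. By the uniqueness of the self-affine measure solving the invariance equation, this upgrades to the measure identity $\mu_{\tilde M,\tilde D}=(B^{-1})_{*}\mu_{M,D}$, i.e. $\int g\,d\mu_{\tilde M,\tilde D}=\int g\circ B^{-1}\,d\mu_{M,D}$ for all bounded Borel $g$. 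I expect verifying this pushforward identity --- checking that $(B^{-1})_{*}\mu_{M,D}$ again satisfies the invariance equation for $(\tilde M,\tilde D)$ and then invoking uniqueness --- to be the one step that needs genuine care; everything afterwards is formal.

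Next I would translate this into a relation between Fourier transforms. Writing $\hat\mu(\xi)=\int e^{-2\pi i\langle\xi,x\rangle}\,d\mu(x)$ and applying the pushforward identity with $g(x)=e^{-2\pi i\langle\xi,x\rangle}$, the change of variables produces $\langle\xi,B^{-1}x\rangle=\langle (B^{*})^{-1}\xi,x\rangle$, whence $\hat\mu_{\tilde M,\tilde D}(\xi)=\hat\mu_{M,D}\bigl((B^{*})^{-1}\xi\bigr)$, equivalently $\hat\mu_{M,D}(\eta)=\hat\mu_{\tilde M,\tilde D}(B^{*}\eta)$. For part (i), recall that $E(\Lambda)$ is orthogonal in $L^2(\mu_{M,D})$ precisely when $\hat\mu_{M,D}(\lambda-\lambda')=0$ for all distinct $\lambda,\lambda'\in\Lambda$, while $E(B^{*}\Lambda)$ is orthogonal in $L^2(\mu_{\tilde M,\tilde D})$ precisely when $\hat\mu_{\tilde M,\tilde D}\bigl(B^{*}(\lambda-\lambda')\bigr)=0$ for such pairs (using that $B^{*}$ is injective, so distinct $\lambda$ give distinct $B^{*}\lambda$). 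Since $\hat\mu_{\tilde M,\tilde D}\bigl(B^{*}(\lambda-\lambda')\bigr)=\hat\mu_{M,D}(\lambda-\lambda')$, the two orthogonality conditions coincide term by term, giving (i).

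For part (ii) it remains to pass from orthonormal sets to orthonormal bases, i.e. to match completeness. Here I would make the change-of-variables operator explicit: define $U\colon L^2(\mu_{M,D})\to L^2(\mu_{\tilde M,\tilde D})$ by $Uf=f\circ B$. The same pushforward identity shows $\|f\circ B\|_{L^2(\mu_{\tilde M,\tilde D})}=\|f\|_{L^2(\mu_{M,D})}$, so $U$ is unitary with inverse $g\mapsto g\circ B^{-1}$, and writing $e_\lambda(x)=e^{2\pi i\langle\lambda,x\rangle}$ a direct computation gives $U e_\lambda=e_{B^{*}\lambda}$ because $\langle\lambda,Bx\rangle=\langle B^{*}\lambda,x\rangle$. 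Thus $U$ carries $E(\Lambda)$ bijectively onto $E(B^{*}\Lambda)$. Since a unitary operator preserves both orthonormality and completeness, $E(\Lambda)$ is an orthonormal basis of $L^2(\mu_{M,D})$ if and only if $E(B^{*}\Lambda)$ is an orthonormal basis of $L^2(\mu_{\tilde M,\tilde D})$, which is exactly (ii); normalization is automatic, as each $\|e_\lambda\|^2=\mu(\mathbb{R}^n)=1$. The whole argument hinges on the single observation that conjugating $M$ by $B$ and scaling $D$ by $B^{-1}$ is implemented on the spectral side by the adjoint $B^{*}$.
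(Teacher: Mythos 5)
The paper itself offers no proof of this theorem: it is quoted from \cite{Dutkay-Jorgensen_2007}, so there is no internal argument to compare against. Your proof is correct and complete, and it is the standard one (essentially that of Dutkay and Jorgensen): the pushforward identity $\mu_{\tilde M,\tilde D}=(B^{-1})_{*}\mu_{M,D}$ verified through uniqueness of the invariant measure, the resulting relation $\hat\mu_{M,D}(\eta)=\hat\mu_{\tilde M,\tilde D}(B^{*}\eta)$, and the unitary $f\mapsto f\circ B$ that intertwines $e_\lambda$ with $e_{B^{*}\lambda}$, which settles both orthogonality and completeness at once. The only cosmetic discrepancy is your sign convention for the Fourier transform (the paper uses $e^{+2\pi i\langle x,\xi\rangle}$), which affects nothing since the orthogonality criterion depends only on the zero set of $\hat\mu$.
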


Theorem \ref{th(1.1)} has been applied extensively by many researchers, but they are hard to simplify the digit set and the  expanding matrix simultaneously. In fact,  in order to simplify the digit set, we usually transform the integer matrix into a non-integer matrix, which brings some difficulties to our discussion. For example, let $M\in M_2(\mathbb{Z})$ and $D=\{(0,0)^t,(1,0)^t,(0,2)^t\}$, we can choose $B=\begin{bmatrix}
1 & 0\\
0& 2 \end{bmatrix}
$ and simplify the digit set $D$ to the well-known digit set $\tilde{D}=B^{-1}D=\{(0,0)^t,(1,0)^t,(0,1)^t\}$. However, the matrix $\tilde{M}=B^{-1}MB$ may not always be an integer matrix. To avoid this flaw, in this paper, we consider the spectrality of self-affine measure under a similarity transformation of general linear group. Before stating
our results, we give some definitions and notations.

To the best of our knowledge, almost all the constructions of singular spectral measures are based on the following Hadamard triple assumption.
\begin{de}
Let $M\in M_n(\mathbb{Z})$ be an expanding matrix, and let $D\subset\mathbb{Z}^n$ be a finite set of cardinality $|D|$. We say that $(M,D)$ is admissible  if there exists a finite set $S\subset \mathbb{Z}^n$ with $|S|=|D|$ such that the  matrix
\begin{equation*}
H=\frac{1}{\sqrt{|S|}}\left[e^{2\pi i <M^{-1}d,s>}\right]_{d\in D,s\in S}
\end{equation*}
is unitary, i.e., $H^*H=I$, where $H^{*}$ denotes the transposed conjugate of $H$.
At this time, we say that  $(M^{-1}D,S)$ is a compatible pair and  $(M,D,S)$ is a Hadamard triple.
\end{de}

It is fairly easy to construct an infinite mutually orthogonal set of exponential functions by using the Hadamard triple assumption. However, it is a much more difficult task to check whether these exponentials form a Fourier basis for $L^2(\mu)$. It has been conjectured that \emph{all Hadamard triples will generate spectral self-affine measures}. The conjecture was solved in dimension one by Laba and Wang \cite{Laba-Wang_2002}. Under various additional conditions, it was also valid in higher dimensions (see \cite{Dutkay-Jorgensen_2007,  Strichartz_1998, Strichartz_2000}). Until recently, Dutkay, Haussermann and Lai \cite{Dutkay-Haussermann-Lai_2015} have completely proved the conjecture.

\begin{thm}\cite{Dutkay-Haussermann-Lai_2015} \label{th(1.3)}
The self-affine measure $\mu_{M,D}$ is a spectral measure if $(M,D)$ is admissible.
\end{thm}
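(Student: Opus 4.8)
The plan is to take the orthonormality for granted---since the admissibility hypothesis supplies it almost for free---and to concentrate all the work on completeness. Fix a set $S\subset\mathbb{Z}^n$ with $|S|=|D|$ witnessing that $(M,D)$ is admissible, and form the candidate spectrum of finite ``radix expansions''
$$
\Lambda=\Big\{\,s_0+M^{*}s_1+\cdots+(M^{*})^{k}s_k\ :\ k\ge 0,\ s_j\in S\,\Big\},
$$
where $M^{*}$ is the transpose of $M$. Writing the Fourier transform of the self-affine measure as $\widehat{\mu_{M,D}}(\xi)=\prod_{j=1}^{\infty}m_D\big((M^{*})^{-j}\xi\big)$ with mask $m_D(\xi)=\frac{1}{|D|}\sum_{d\in D}e^{2\pi i\langle d,\xi\rangle}$, the unitarity of $H$ is exactly the statement that $m_D\big((M^{*})^{-1}(s-s')\big)=\delta_{s,s'}$ for $s,s'\in S$. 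A telescoping computation then shows that $E(\Lambda)$ is an orthonormal family in $L^2(\mu_{M,D})$. By the Jorgensen--Pedersen criterion, proving that $\mu_{M,D}$ is spectral with spectrum $\Lambda$ is equivalent to verifying
$$
Q_\Lambda(\xi):=\sum_{\lambda\in\Lambda}\big|\widehat{\mu_{M,D}}(\xi+\lambda)\big|^2\equiv 1,
$$
and since orthonormality already forces $Q_\Lambda\le 1$ pointwise, the entire difficulty is to exclude the points at which $Q_\Lambda$ drops strictly below $1$.

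Second, I would approach completeness through finite-level truncations rather than arguing directly with the infinite product. For each $k$, let $\nu_k=\delta_{M^{-1}D}\ast\delta_{M^{-2}D}\ast\cdots\ast\delta_{M^{-k}D}$, where $\delta_E$ denotes the normalized uniform probability measure on a finite set $E$; then $\nu_k\to\mu_{M,D}$ weakly and $\widehat{\nu_k}=\prod_{j=1}^{k}m_D\big((M^{*})^{-j}\cdot\big)$. On each finite model the Hadamard structure yields a genuine orthonormal basis of exponentials indexed by the level-$k$ part of $\Lambda$. The aim is to show that these finite spectra assemble into an \emph{almost-Parseval-frame tower} for $L^2(\mu_{M,D})$: a nested sequence of finite exponential families whose lower and upper frame bounds converge to $1$. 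Given such a tower one extracts, by a weak-$*$ compactness argument on the associated frame measures, an honest orthonormal basis, and hence spectrality. The construction of the tower is where the admissibility hypothesis must be used quantitatively, through a uniform lower bound on $|\widehat{\mu_{M,D}}|$ on a fixed neighbourhood of the lattice points that the digit expansions can reach.

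The hard part, and the heart of the theorem, is controlling the locus where $Q_\Lambda<1$. These bad points are governed by the periodic (lattice-invariant) zero set of $\prod_j m_D$ together with the ``extreme cycles'' of the dual dynamics $\xi\mapsto (M^{*})^{-1}(\xi+s)$, and a naive choice of $S$ may leave a nonempty obstruction. The key device to overcome this is an \emph{equi-positivity} estimate: one shows that the family of translated self-affine measures arising along the tower admits a single neighbourhood and a single constant $\varepsilon_0>0$ such that, for every member and every frequency, some bounded level already contributes mass at least $\varepsilon_0$. Because $M\in M_n(\mathbb{Z})$ and $D\subset\mathbb{Z}^n$, the relevant zero sets are $\mathbb{Z}^n$-periodic and the dual orbits live in a finite quotient, so this uniform positivity can be reduced to a finite combinatorial verification and then propagated to all levels. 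Feeding equi-positivity into the tower upgrades the almost-Parseval frames to genuine tightness in the limit, forcing $Q_\Lambda\equiv1$. I expect this equi-positivity/compactness step---rather than the essentially formal orthonormality---to be the main obstacle, precisely because it is the point at which the full strength of the integral Hadamard triple assumption, and not merely first-level orthogonality, is indispensable.
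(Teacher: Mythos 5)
First, a point of order: the paper does not prove this theorem at all. Theorem \ref{th(1.3)} is imported verbatim from \cite{Dutkay-Haussermann-Lai_2015} and used as a black box, so the only meaningful comparison is with the published proof of Dutkay, Haussermann and Lai. Your outline does echo the architecture of that proof (orthonormality is formal; completeness via finite truncations, almost-Parseval-frame towers, equi-positivity, and a limiting argument), but as written it is a plan rather than a proof, and it contains one commitment that is fatal. You fix a single witness $S$ and declare the goal to be $Q_\Lambda\equiv 1$ for the naive radix spectrum $\Lambda=\{s_0+M^*s_1+\cdots+(M^*)^k s_k : s_j\in S\}$. That goal is false in general. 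Take $n=1$, $M=4$, $D=\{0,2\}$, $S=\{0,3\}$: this is a Hadamard triple, and $E(\Lambda)$ is orthogonal (all differences have the form $4^m(2k+1)$, which lie in $\mathcal{Z}(\hat{\mu}_{M,D})$), yet $E(\Lambda)$ is not even maximal: since $1+\lambda\in\mathcal{Z}(\hat{\mu}_{M,D})$ for every $\lambda\in\Lambda$ (the point $1$ is an extreme cycle, $|m_D(1)|=1$ and $(1+3)/4=1$), one can adjoin the frequency $-1$ to $\Lambda$ and keep orthogonality. Hence $Q_\Lambda<1$ somewhere, and no equi-positivity or compactness argument can close your loop for that $\Lambda$: the measure is spectral, but its spectrum is not the set you wrote down. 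The actual proof is structured precisely to avoid this trap: the spectrum is assembled from a tower in which the level digit sets may be changed, and extreme-cycle corrections are incorporated, so the final spectrum need not be the radix set of any single $S$.

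The second gap is your claim that the equi-positivity estimate ``can be reduced to a finite combinatorial verification'' because the zero sets are $\mathbb{Z}^n$-periodic. This is exactly where the real difficulty of the theorem lives: that reduction works only when the dual dynamics admit no proper rational invariant subspace. When such a subspace exists, uniform positivity fails as stated, and \cite{Dutkay-Haussermann-Lai_2015} must pass to a quasi-product decomposition of $\mu_{M,D}$ and run an induction on the dimension $n$; this case analysis is the reason the conjecture remained open in dimension $n\ge 2$ for over a decade after {\L}aba and Wang settled $n=1$ \cite{Laba-Wang_2002}. In short, your sketch names the right tools, but the two steps it treats as routine --- the choice of spectrum and the invariant-subspace obstruction --- are precisely the content of the theorem.
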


For a prime $p$, let $\mathbb{F}_p:=\mathbb{Z}/p\mathbb{Z}$ denote  the residue class fields.
All nonsingular $n\times n$ matrices over $\mathbb{F}_p$ form a finite group under matrix multiplication,
called the \emph{ general linear group} $GL_n(p)$.

For a finite digit set $D$, let
\begin{equation}\label{eq(1.4.0)}
m_D(x)=\frac 1{\left| D\right|}\sum\limits_{d\in D}{e^{2\pi i\langle d,x\rangle}},\quad x\in\mathbb{R}^n,
\end{equation}
denote the \emph{mask polynomial} of $D$. Define
\begin{equation}\label{eq(1.5.0)}
\mathcal{Z}_{D}^n:=\mathcal{Z}(m_{D})\cap [0, 1)^n,
\end{equation}
where $\mathcal{Z}(m_{D}):=\{x\in\mathbb{R}^n:m_{D}(x)=0\}$.
It is easy to see that $m_D(\cdot)$ is a $\mathbb{Z}^n$-periodic function for $D\subset \mathbb{Z}^n$. In this case,
\begin{equation}\label{eq(1.6.0)}
\mathcal{ Z}(m_D)=\mathcal{Z}_{D}^n+\mathbb{Z}^n.
\end{equation}

For a positive integer $q\geq 2$, let
\begin{equation}\label{eq(1.7)}
E_q^n:=\frac{1}{q}\{(l_1,l_2,\cdots,l_n)^t:0\leq l_1,\cdots,l_n\leq q-1\}  \ \ \ {\rm and} \ \ \ \ \mathring{E}_q^n:=E_q^n\setminus\{0\}.
\end{equation}
There are many well-known digit sets, whose roots of mask polynomial have close connections with $\mathring{E}_q^n$. For example,
$$D_1=\left\{\left( {\begin{array}{*{20}{c}}
0\\
0
\end{array}} \right), \left( {\begin{array}{*{20}{c}}
1\\
0
\end{array}} \right),\left( {\begin{array}{*{20}{c}}
0\\
1
\end{array}} \right)\right\} \quad \text{and} \quad D_2=\left\{\left( {\begin{array}{*{20}{c}}
0\\
0
\end{array}} \right), \left( {\begin{array}{*{20}{c}}
1\\
0
\end{array}} \right),\left( {\begin{array}{*{20}{c}}
0\\
1
\end{array}} \right),\left( {\begin{array}{*{20}{c}}
-1\\
-1
\end{array}} \right)\right\}.
$$
 By a direct calculation, we have
 $$\mathcal{Z}_{D_1}^2=\left\{\left( {\begin{array}{*{20}{c}}
\frac{1}{3}\\
\frac{2}{3}
\end{array}} \right) ,\left( {\begin{array}{*{20}{c}}
\frac{2}{3}\\
\frac{1}{3}
\end{array}} \right)\right\}
 \subset \mathring{E}_3^2 \quad \text{and} \quad \mathcal{Z}_{D_2}^2=\left\{\left( {\begin{array}{*{20}{c}}
\frac{1}{2}\\
0
\end{array}} \right),\left( {\begin{array}{*{20}{c}}
0\\
\frac{1}{2}
\end{array}} \right),\left( {\begin{array}{*{20}{c}}
\frac{1}{2}\\
\frac{1}{2}
\end{array}} \right)\right\}
= \mathring{E}_2^2.
$$

In the following definition, we introduce a similarity transformation of general linear group  $GL_n(p)$, which can be used to study the spectrality for some self-affine measures.

\begin{de}
Let $p$ be a  prime number, $M, \tilde{M} \in M_n(\mathbb{Z})$ be two integer matrices, and  $D, \tilde{D}\subset\mathbb{Z}^n$  be two finite  digit  sets. We say that two pairs $(M,D)$ and $(\tilde{M}, \tilde{D})$ are conjugate in $GL_n(p)$ (through the matrix $A,B\in GL_n(p)$) if there
exist two integer matrices $A,B\in GL_n(p)$ with $AB=I$ $\pmod p$ such that one of the following condition holds:
(i) $\tilde{M}=AMB$ and $D=B\tilde{D}$;
(ii) $\tilde{M}=AMB$ and $\tilde{D}=AD$.
\end{de}

Now we give our first main result.

\begin{thm}\label{th(1.4)}
Suppose that $(M,D)$ and $(\tilde{M}, \tilde{D})$ are conjugate in $GL_n(p)$. If
$\mathcal{Z}_{\tilde{D}}^n\subset \mathring{E}_p^n$ or $\mathcal{Z}_{D}^n\subset \mathring{E}_p^n$, then  $(M,D)$ is admissible if and  only if  $(\tilde{M},\tilde{D})$ is admissible.
\end{thm}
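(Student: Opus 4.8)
The plan is to translate admissibility into the vanishing of the mask polynomial on a difference set, to dispose of case (ii) by a direct cancellation, and to reduce case (i) to case (ii) together with an invariance lemma for digit sets that agree modulo $p$.

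First I would reformulate admissibility. The columns of $H$ are automatically unit vectors, so $H$ is unitary if and only if its columns are mutually orthogonal, and the inner product of the columns indexed by $s,s'\in S$ equals $\frac{1}{|D|}\sum_{d\in D}e^{2\pi i\langle M^{-1}d,\,s-s'\rangle}=m_D\big(M^{-t}(s-s')\big)$, where $M^{-t}=(M^{-1})^{t}$. Thus $(M,D)$ is admissible precisely when there is a set $S\subset\mathbb{Z}^n$ with $|S|=|D|$ and
\[
M^{-t}\big((S-S)\setminus\{0\}\big)\ \subset\ \mathcal{Z}(m_D).
\]
I would also record the effect of the digit relation on the mask polynomial: $\tilde D=AD$ gives $m_{\tilde D}=m_D\circ A^{t}$, and $D=B\tilde D$ gives $m_D=m_{\tilde D}\circ B^{t}$.

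Case (ii) requires no spectral hypothesis and I would treat it first. Since $(AMB)^{-1}A=B^{-1}M^{-1}$, for $\tilde d=Ad$ one has $\langle\tilde M^{-1}\tilde d,\tilde s\rangle=\langle M^{-1}d,\,B^{-t}\tilde s\rangle$; choosing $\tilde S=B^{t}S$ makes $B^{-t}\tilde s=s$, so the two exponential matrices coincide entrywise and admissibility transfers in both directions. I would then reduce case (i) to this one. Applying the case-(ii) transform to $(M,D)$ replaces it, without changing admissibility, by $(\tilde M,AD)$ with $\tilde M=AMB$. Because $AB\equiv I\pmod p$ we may write $AB=I+pK$ with $K\in M_n(\mathbb{Z})$, and since $D=B\tilde D$ this gives $AD=AB\tilde D=\tilde D+pK\tilde D$; hence the two digit sets attached to the \emph{same} matrix $\tilde M$ satisfy $AD\equiv\tilde D\pmod p$ elementwise. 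Thus the whole theorem comes down to an invariance statement: for a fixed expanding $\tilde M$ and two digit sets that are congruent modulo $p$, admissibility is unchanged.

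The heart of the matter is this invariance, and here the hypothesis $\mathcal{Z}\subset\mathring{E}_p^n$ is indispensable. The key observation is that if two integer digit sets differ by $p\mathbb{Z}^n$ elementwise, then their mask polynomials take identical values at every point of $\tfrac1p\mathbb{Z}^n$, because the perturbation contributes a factor $e^{2\pi i\langle p\,(\cdot),\,x\rangle}=1$ there; consequently the two zero sets coincide inside $\tfrac1p\mathbb{Z}^n$. When $\mathcal{Z}_{\tilde D}^n\subset\mathring{E}_p^n$ the entire set $\mathcal{Z}(m_{\tilde D})$ lies in $\tfrac1p\mathbb{Z}^n$, so every Hadamard spectrum of the nice pair has all of its relevant differences landing on common zeros, and the \emph{same} spectrum serves the perturbed pair; this yields one implication immediately. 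The main obstacle is the reverse implication under the asymmetric ``or'' hypothesis: when only the perturbed side is assumed to lie in $\mathring{E}_p^n$, a Hadamard spectrum for the non-nice pair could in principle exploit zeros outside $\tfrac1p\mathbb{Z}^n$, which are invisible to the nice pair. Overcoming this requires showing that such a spectrum can always be arranged so that the differences $\tilde M^{-t}(s-s')$ are $p$-torsion points of the torus, i.e. lie in $\tfrac1p\mathbb{Z}^n$; this is exactly the place where $\mathring{E}_p^n$ and the $p$-part of the finite group $\mathbb{Z}^n/\tilde M^{t}\mathbb{Z}^n$ enter, and where the failure of $A,B$ to be unimodular over $\mathbb{Z}$ (so that $AD$ genuinely differs from $\tilde D$) must be absorbed. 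The remaining verifications — that $\tilde M=AMB$ is expanding and that $|AD|=|\tilde D|=|D|$ — are routine consequences of $A,B$ being invertible over $\mathbb{Q}$.
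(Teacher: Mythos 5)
Your reformulation of admissibility as $(S-S)\setminus\{0\}\subset M^{*}\mathcal{Z}(m_D)$ is correct and matches the paper's starting point, the transfer $S\mapsto\tilde S=B^{t}S$ is a valid proof of one implication in case (ii), and your observation that elementwise congruent digit sets have equal mask polynomials on $\tfrac1p\mathbb{Z}^n$ is a clean way to pass from the pair with $\mathcal{Z}_{\tilde D}^n\subset\mathring{E}_p^n$ to its perturbation. However, neither direction of the theorem is actually closed by your architecture, because the two steps it rests on are, respectively, false and unproved. The false step is the claim that in case (ii) admissibility transfers in both directions with no hypothesis on zero sets: the map $S\mapsto B^{t}S$ goes only one way, and the reverse would need $B^{*-1}\tilde S\subset\mathbb{Z}^n$, which no spectrum need satisfy. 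Concretely, take $n=1$, $p=3$, $A=B=2$ (so $AB=4\equiv1\pmod 3$), $M=3$, $D=\{0,1\}$, hence $\tilde M=AMB=12$, $\tilde D=AD=\{0,2\}$. Then $(\tilde M,\tilde D)$ is admissible with $\tilde S=\{0,3\}$ (the resulting $H$ is the standard $2\times2$ Hadamard matrix), while $(3,\{0,1\})$ is not admissible, since orthogonality would force $2(s-s')=3(2k+1)$, an even number equal to an odd one. This does not contradict the theorem, because here $\mathcal{Z}_{D}^1=\{1/2\}$ and $\mathcal{Z}_{\tilde D}^1=\{1/4,3/4\}$, so the hypothesis fails; but it destroys the unconditional equivalence on which your reduction of case (i) is built, and it shows the $\mathring{E}_p^n$ hypothesis must enter precisely in the step you declared hypothesis-free.

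The unproved step is the one you yourself call ``the main obstacle'': passing from a spectrum of the perturbed pair back to the nice pair, where difference vectors could a priori land on zeros of the perturbed mask polynomial lying outside $\tfrac1p\mathbb{Z}^n$. You describe what would need to be shown but give no argument, so one implication of the theorem is missing outright, and (by the counterexample above) the other implication is also incomplete. The paper handles both issues with the same two tools, which are exactly what your proposal lacks. First, Proposition~\ref{lem(4.3)}(i): writing $\det(AB){A^*}^{-1}{B^*}^{-1}=pM'+I$ and using Lemma~\ref{lem(2.5)}(ii) to absorb the resulting $\mathbb{Z}^n$-errors into $\mathcal{Z}(m_{\tilde D})$ (this is where $\mathcal{Z}_{\tilde D}^n\subset\mathring{E}_p^n$ is used), one gets $M^{*}\mathcal{Z}(m_D)\subset\frac{1}{\det(AB)}{B^*}^{-1}\tilde{M}^{*}\mathcal{Z}(m_{\tilde D})$ and $\tilde{M}^{*}\mathcal{Z}(m_{\tilde D})\subset B^{*}M^{*}\mathcal{Z}(m_D)$, which convert spectra in either direction at the level of zero sets. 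Second, an integrality repair via Euler's theorem (Lemma~\ref{th(2.1)}): instead of the non-integral ${B^*}^{-1}\tilde S$, one takes $S:=|\det(B)|^{\varphi(p)}{B^*}^{-1}\tilde S$, which is integral since $\det(B)B^{-1}$ is the adjugate, and uses $|\det(B)|^{\varphi(p)}\equiv1\pmod p$ together with Lemma~\ref{lem(2.5)}(ii) to check the rescaled differences stay in $\tilde{M}^{*}\mathcal{Z}(m_{\tilde D})$. Until you supply arguments of this kind, both implications of your proof have gaps. (A further minor slip: $\tilde M=AMB$ being expanding is not a consequence of $A,B$ being invertible over $\mathbb{Q}$ --- e.g.\ $M=2I$, $A=\left[\begin{smallmatrix}1&0\\3&1\end{smallmatrix}\right]$, $B=\left[\begin{smallmatrix}1&3\\0&1\end{smallmatrix}\right]$ gives $AMB$ an eigenvalue inside the unit disc; it is a standing assumption on the two pairs, not something to be verified.)
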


\begin{de}\label{de(1.2)}
Let  $\mu$ be a Borel probability measure with compact support on $\mathbb{R}^n$.  Let $\Lambda$ be a finite or countable subset of $\mathbb{R}^n$, and let $E(\Lambda)$ be defined by \eqref{def:eq_E(A)}. We denote $E(\Lambda)$ by  $E^*(\Lambda)$ if $E(\Lambda)$ is a maximal orthogonal set of exponential functions in $L^2(\mu)$. Let
\begin{align}\label{eq(1.7-1)}
n^*(\mu):=\sup\{ |\Lambda|:  E^*(\Lambda) \hbox { is a maximal orthogonal set} \},
\end{align}
 and call $n^*(\mu)$  the maximal cardinality of the orthogonal exponential functions  in $L^2(\mu)$.
\end{de}

For the conjugated pairs $(M,D)$ and $(\tilde{M}, \tilde{D})$, the following theorem indicates the relationship of orthogonal exponentials  between  $L^2(\mu_{M,D})$ and $L^2(\mu_{\tilde{M},\tilde{D}})$.

\begin{thm}\label{th(1.5)}
Suppose that $(M,D)$ and $(\tilde{M}, \tilde{D})$ are conjugate in $GL_n(p)$. If
$\mathcal{Z}_{\tilde{D}}^n\subset \mathring{E}_p^n$ or $\mathcal{Z}_{D}^n\subset \mathring{E}_p^n$, then $L^2(\mu_{M,D})$ has  infinite many orthogonal exponential functions if and only if $L^2(\mu_{\tilde{M},\tilde{D}})$  has  infinite many orthogonal exponential functions. Moreover, if $\det (M)\notin p\mathbb{Z}$, then $n^*(\mu_{M,D})=n^*(\mu_{\tilde{M},\tilde{D}})\leq p^n$.
\end{thm}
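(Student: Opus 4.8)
The plan is to convert the statement about orthogonal exponentials into a statement about the zero set of the Fourier transform and then to transport orthogonal sets across the conjugacy by the integral isomorphism $B^{t}$. Recall that $e_{\lambda},e_{\lambda'}$ are orthogonal in $L^{2}(\mu_{M,D})$ exactly when $\widehat{\mu}_{M,D}(\lambda-\lambda')=0$, and that
\[
\widehat{\mu}_{M,D}(\xi)=\prod_{k\ge1}m_{D}\big((M^{t})^{-k}\xi\big),\qquad \mathcal Z(\widehat{\mu}_{M,D})=\bigcup_{k\ge1}(M^{t})^{k}\,\mathcal Z(m_{D}).
\]
The two cases in the definition of conjugacy are parallel and the hypothesis is symmetric in $D$ and $\tilde D$, so I treat case (ii): $\tilde M=AMB$, $\tilde D=AD$ with $AB\equiv I\pmod p$, and I assume $\mathcal Z_{D}^{n}\subset\mathring E_{p}^{n}$. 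By this assumption and the periodicity \eqref{eq(1.6.0)} we have $\mathcal Z(m_{D})=\tfrac1p Z+\mathbb Z^{n}\subset\tfrac1p\mathbb Z^{n}$ for some $Z\subset\mathbb F_{p}^{n}\setminus\{0\}$; since $M^{t}$ is an integer matrix this forces $\mathcal Z(\widehat{\mu}_{M,D})\subset\tfrac1p\mathbb Z^{n}$, and likewise for $\widehat{\mu}_{\tilde M,\tilde D}$. Hence every orthogonal set, after a translation placing one of its points at the origin, may be assumed to lie in $\tfrac1p\mathbb Z^{n}$.

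The heart of the argument is the zero-set identity $\mathcal Z(\widehat{\mu}_{\tilde M,\tilde D})=B^{t}\,\mathcal Z(\widehat{\mu}_{M,D})$. From $\tilde D=AD$ one gets $m_{\tilde D}(x)=m_{D}(A^{t}x)$, hence $\mathcal Z(m_{\tilde D})=A^{-t}\mathcal Z(m_{D})$, and from $\tilde M^{t}=B^{t}M^{t}A^{t}$ a direct expansion gives $(\tilde M^{t})^{k}A^{-t}=B^{t}(M^{t}Q)^{k-1}M^{t}$ with $Q:=(BA)^{t}$; note that all the matrices here are integral and that $BA\equiv I\pmod p$ gives $Q\equiv I\pmod p$. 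Therefore
\[
\mathcal Z(\widehat{\mu}_{\tilde M,\tilde D})=\bigcup_{k\ge1}(\tilde M^{t})^{k}\mathcal Z(m_{\tilde D})=B^{t}\bigcup_{k\ge1}(M^{t}Q)^{k-1}M^{t}\,\mathcal Z(m_{D}),
\]
so the identity reduces to showing that the insertions of $Q$ do not change the union, i.e.
\[
\bigcup_{k\ge1}(M^{t}Q)^{k-1}M^{t}\,\mathcal Z(m_{D})=\bigcup_{k\ge1}(M^{t})^{k}\,\mathcal Z(m_{D}).
\]
Granting this, the integral isomorphism $\Lambda\mapsto B^{t}\Lambda$ carries orthogonal sets of $\mu_{M,D}$ bijectively onto orthogonal sets of $\mu_{\tilde M,\tilde D}$ while preserving cardinality; in particular it matches infinite orthogonal sets with infinite ones and maximal sets with maximal ones, which yields the first assertion as well as $n^{*}(\mu_{M,D})=n^{*}(\mu_{\tilde M,\tilde D})$, with no assumption on $\det M$.

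It remains to prove the bound under $\det M\notin p\mathbb Z$, so that $M^{t}$ is invertible modulo $p$. I claim $\mathcal Z(\widehat{\mu}_{M,D})\cap\mathbb Z^{n}=\emptyset$: if a nonzero $v\in\mathbb Z^{n}$ lay in some $(M^{t})^{k}\mathcal Z(m_{D})$, then $(M^{t})^{-k}v$ would be a point of $\tfrac1p\mathbb Z^{n}\setminus\mathbb Z^{n}$, and from $\gcd(\det M,p)=1$ one deduces $pv\in(M^{t})^{k}\mathbb Z^{n}$ if and only if $v\in(M^{t})^{k}\mathbb Z^{n}$, forcing $(M^{t})^{-k}v\in\mathbb Z^{n}$, a contradiction. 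Consequently no two distinct elements of an orthogonal set are congruent modulo $\mathbb Z^{n}$, so such a set injects into $\tfrac1p\mathbb Z^{n}/\mathbb Z^{n}\cong\mathbb F_{p}^{n}$ and has at most $p^{n}$ elements, giving $n^{*}(\mu_{M,D})\le p^{n}$; since $\det\tilde M=\det A\,\det M\,\det B\equiv\det M\pmod p$ is also prime to $p$, the same bound holds for $\mu_{\tilde M,\tilde D}$. The main obstacle is the boxed zero-set identity. The difficulty is genuine: scale by scale $(M^{t}Q)^{k-1}M^{t}\mathcal Z(m_{D})$ and $(M^{t})^{k}\mathcal Z(m_{D})$ need not coincide, their underlying sublattices having different covolumes once $\det Q\neq\pm1$, so one cannot argue term by term in the product; what must be shown is that the two nested families nevertheless sweep out the same subset of $\tfrac1p\mathbb Z^{n}$ when united over all $k$. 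This is exactly where the hypothesis $\mathcal Z_{D}^{n}\subset\mathring E_{p}^{n}$ is indispensable—it pins every zero of $m_{D}$ to the single lattice $\tfrac1p\mathbb Z^{n}$ and renders the vanishing condition a matter of the residue in $Z$—so that the perturbation $Q=I+pN$, being trivial modulo $p$, permutes the relevant cosets $\tfrac1pz+\mathbb Z^{n}$ without altering which exponentials can be made orthogonal.
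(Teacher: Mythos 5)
Your algebraic reduction is correct as far as it goes: with $Q=(BA)^{t}\equiv I\pmod p$ one does get $\mathcal Z(\hat\mu_{\tilde M,\tilde D})=B^{t}\bigcup_{k\ge1}(M^{t}Q)^{k-1}M^{t}\mathcal Z(m_{D})$, so everything hinges on the boxed identity $\bigcup_{k\ge1}(M^{t}Q)^{k-1}M^{t}\mathcal Z(m_{D})=\bigcup_{k\ge1}(M^{t})^{k}\mathcal Z(m_{D})$. But you never prove it --- your final paragraph concedes it is the ``main obstacle'' and offers only the heuristic that $Q=I+pN$ permutes the cosets $\tfrac1p z+\mathbb Z^{n}$. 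That heuristic cannot close the gap: membership in $(M^{t})^{j}\mathcal Z(m_{D})=(M^{t})^{j}\mathcal Z_{D}^{n}+(M^{t})^{j}\mathbb Z^{n}$ is a condition modulo the lattice $(M^{t})^{j}\mathbb Z^{n}$, not modulo $\mathbb Z^{n}$, while the error vectors created by inserting $Q$ only lie in $\mathbb Z^{n}$ (or in intermediate images of $\mathbb Z^{n}$), and there is no reason they lie in $(M^{t})^{j}\mathbb Z^{n}$. This is exactly why the paper proves no such identity. Proposition \ref{lem(4.3)}(ii) establishes only one-sided inclusions with scalar corrections, $c_1B^*\mathcal Z(\hat{\mu}_{M,D})\subset\mathcal Z(\hat{\mu}_{\tilde M,\tilde D})$ and $c_2A^*\mathcal Z(\hat{\mu}_{\tilde M,\tilde D})\subset\mathcal Z(\hat{\mu}_{M,D})$, where $c_1,c_2$ are powers of $|\det\tilde M|,|\det M|$ chosen $\equiv 1\pmod p$ via Euler's theorem (Lemma \ref{th(2.1)}) and the order bound $O_p(\tilde M^*)\le p^n-1$ (Lemma \ref{th(2.4)}); Lemma \ref{lem(2.5)}(i) shows that multiplying by such a scalar fixes cosets mod $\mathbb Z^{n}$ while simultaneously carrying $\mathbb Z^{n}$ into $(M^{*})^{j}\mathbb Z^{n}$, which is what genuinely absorbs the error terms. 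That machinery is also precisely where $\det M\notin p\mathbb Z$ enters. Your claim that the transport works ``with no assumption on $\det M$'' should itself have been a warning sign: it would settle the case the authors explicitly declare they cannot handle in Remark \ref{re(3.3)}.

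A second structural difference: because the exact identity is unavailable, the paper proves the first assertion (infinitely many orthogonal exponentials for one measure iff for the other) by an entirely different, determinant-free route. It invokes Li's criterion (Lemma \ref{th(2.5.1)}), which reduces infinitude of orthogonal exponentials to the lattice condition $(M^{*j}\mathcal Z_{D}^{n})\cap\mathbb Z^{n}\neq\emptyset$ for some $j$, and then transports that single condition across the conjugacy using only the weak inclusion of Proposition \ref{lem(4.3)}(i) (which carries a denominator $(\det(AB))^{j+1}$ but needs no hypothesis on $\det M$) together with the identity $A^*\tilde{M^*}^{j}B^*={M^*}^{j}+pM_{j}$. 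Your pigeonhole argument for the bound $n^*\le p^n$ is sound and essentially the paper's own, but as written both the equivalence and the equality $n^*(\mu_{M,D})=n^*(\mu_{\tilde M,\tilde D})$ in your proposal rest entirely on the unproven zero-set identity, so the proof has a genuine gap at its core.
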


The planar Sierpinski measure  $\mu_{M,\mathcal {D}}$  generated by an expanding integer matrix $M\in M_2(\mathbb{Z})$ and the digit set
\begin{equation}\label{eq(1.9)}
\mathcal {D}=\left\{\left( {\begin{array}{*{20}{c}}
0\\
0
\end{array}} \right), \left( {\begin{array}{*{20}{c}}
1\\
0
\end{array}} \right),\left( {\begin{array}{*{20}{c}}
0\\
1
\end{array}} \right)\right\}
\end{equation}
has attracted many researchers \cite{An-He-Tao_2015,Deng-Lau_2015,Deng_2016,Li_2008,Liu-Dong-Li_2017,Chen-Liu_2019}, and the spectral properties of $\mu_{M,\mathcal {D}}$ have been completely characterized.  For a matrix $M=\left[ {\begin{array}{*{20}{c}}
{{a}}&{{b}}\\
{{d}}&{{{c}}}
\end{array}} \right]\in M_2(\mathbb{Z})$, by using the residue system of modulo $3$, it can be rewritten in the following form:
\begin{equation*}
M=\left[ {\begin{array}{*{20}{c}}
{a}&{b}\\
{d}&{c}
\end{array}} \right]=3\tilde{M}+M_\alpha,
\end{equation*}
where $\tilde{M}\in M_2(\mathbb{Z})$ and the entries of the matrix $M_\alpha$ are $0, 1$ or $-1$.

Before stating the results about the Sierpinski measure $\mu_{M,\mathcal {D}}$, we first define the following two sets:
\begin{equation}\label{eq(1.11)}
\mathfrak{M}_1:= \pm\left\{\left[ {\begin{array}{*{20}{c}}
0&0\\
0&0
\end{array}} \right], \left[ {\begin{array}{*{20}{c}}
1&1\\
1&1
\end{array}} \right], \left[ {\begin{array}{*{20}{c}}
0&1\\
0&1
\end{array}} \right], \left[ {\begin{array}{*{20}{c}}
1&0\\
1&0
\end{array}} \right],  \left[ {\begin{array}{*{20}{c}}
1&-1\\
1&-1
\end{array}} \right]     \right\}
\end{equation}
and
\begin{equation}\label{eq(1.12)}
\mathfrak{M}_2=:\pm\left\{ \left[ {\begin{array}{*{20}{c}}
0&1\\
1&1
\end{array}} \right], \left[ {\begin{array}{*{20}{c}}
0&1\\
1&-1
\end{array}} \right], \left[ {\begin{array}{*{20}{c}}
1&1\\
1&0
\end{array}} \right],  \left[ {\begin{array}{*{20}{c}}
1&-1\\
1&1
\end{array}} \right],\left[ {\begin{array}{*{20}{c}}
1&-1\\
-1&0
\end{array}} \right],\left[ {\begin{array}{*{20}{c}}
1&1\\
-1&1
\end{array}} \right]       \right\}.
\end{equation}
Now we summarize the known results of the Sierpinski measure $\mu_{M,\mathcal {D}}$ as follows:
\begin{thm} \cite{An-He-Tao_2015, Liu-Dong-Li_2017}\label{th(1.6)}
Let $M\in M_2(\mathbb{Z})$ be an expanding matrix, and let $\mathcal {D}$  and $\mu_{M,\mathcal {D}}$ be defined by \eqref{eq(1.9)} and \eqref{eq(1.1)}, respectively. Then

(i) \ \  $\mu_{M,\mathcal {D}}$ is a spectral measure $\Longleftrightarrow$ $(M, \mathcal {D})$ is admissible  $\Longleftrightarrow$ $M\in\mathfrak{M}_1 \pmod 3$ ;

(ii) \ \ if $\det (M)\notin 3\mathbb{Z}$, then $L^2(\mu_{M,\mathcal {D}})$ contains at most $9$ mutually
orthogonal exponential functions, and the upper boundary $9$ can be obtained  if and only if  $M\in\mathfrak{M}_2  \pmod 3 $.
\end{thm}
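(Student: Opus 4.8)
The plan is to reduce every assertion to arithmetic modulo $3$ on the zero set $\mathcal{Z}_{\mathcal D}^2=\{(1/3,2/3)^t,(2/3,1/3)^t\}$ of the mask polynomial. Since $m_{\mathcal D}(x_1,x_2)=\frac13(1+e^{2\pi i x_1}+e^{2\pi i x_2})$, one has $m_{\mathcal D}(\xi)=0$ exactly when $3\xi\equiv(1,2)^t$ or $(2,1)^t\pmod{3\mathbb{Z}^2}$, so the relevant datum is the two-point set $L=\{(1,2)^t,(2,1)^t\}\subset\mathbb{F}_3^2$, the nonzero vectors on one line through the origin. Throughout I write $\overline{M}:=M\bmod 3\in M_2(\mathbb{F}_3)$ and use the product expansion $\widehat{\mu_{M,\mathcal D}}(\xi)=\prod_{k\ge1}m_{\mathcal D}((M^t)^{-k}\xi)$, so that $\widehat{\mu_{M,\mathcal D}}(\xi)=0$ iff $(M^t)^{-k}\xi\in\mathcal{Z}(m_{\mathcal D})$ for some $k\ge1$, and two frequencies $\lambda,\eta$ give orthogonal exponentials in $L^2(\mu_{M,\mathcal D})$ iff $\lambda-\eta$ is such a zero.

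For part (i), the implication ``admissible $\Rightarrow$ spectral'' is exactly Theorem \ref{th(1.3)}. To prove ``admissible $\iff\overline{M}\in\mathfrak{M}_1$'', I translate the Hadamard condition: $(M,\mathcal D)$ is admissible iff there is $S=\{s_1,s_2,s_3\}\subset\mathbb{Z}^2$ with each nonzero difference in $M^t\mathcal{Z}(m_{\mathcal D})\cap\mathbb{Z}^2$. A short computation shows this intersection is nonempty iff $\frac13 M^t(1,2)^t\in\mathbb{Z}^2$, i.e. $M^t(1,2)^t\equiv0\pmod 3$; writing $M^t(1,2)^t\equiv(a-d,b-c)^t$ (using $2\equiv-1$) this says precisely that the two rows of $\overline{M}$ coincide, which is exactly the list $\mathfrak{M}_1$ of nine matrices. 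When it holds, $w_1=\frac13 M^t(1,2)^t$ and $w_2=\frac13 M^t(2,1)^t$ are integer vectors and $S=\{0,w_1,w_2\}$ works, since $M^{-t}w_1=(1/3,2/3)^t$, $M^{-t}w_2=(2/3,1/3)^t$, and $M^{-t}(w_2-w_1)\equiv(1/3,2/3)^t\pmod{\mathbb{Z}^2}$ all lie in $\mathcal{Z}(m_{\mathcal D})$. It then remains to prove ``spectral $\Rightarrow$ admissible'', and I argue the contrapositive: if $\overline{M}\notin\mathfrak{M}_1$ and $3\nmid\det M$, then part (ii) yields at most $9$ orthogonal exponentials, so $\mu_{M,\mathcal D}$ cannot carry the infinite orthonormal basis a spectral measure requires. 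The residual case $3\mid\det M$ with $\overline{M}\notin\mathfrak{M}_1$ (so $\overline{M}$ has rank one) must be treated separately.

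For part (ii), assume $3\nmid\det M$, so $\overline{M}\in GL_2(\mathbb{F}_3)$. Because $(M^t)^k\mathbb{Z}^2\subset\mathbb{Z}^2$, every zero of $\widehat{\mu_{M,\mathcal D}}$ lies in $\frac13\mathbb{Z}^2$; and if some nonzero $\xi\in\mathbb{Z}^2$ were a zero, then $(M^t)^{-k}(3\xi)\in\mathbb{Z}^2$ would be $\equiv(1,2)^t$ or $(2,1)^t\pmod 3$, yet applying the invertible $(M^t)^k$ forces it $\equiv0$, a contradiction. Hence $\mathcal{Z}(\widehat{\mu_{M,\mathcal D}})\cap\mathbb{Z}^2=\varnothing$, so $\lambda\mapsto 3\lambda\bmod 3$ is injective on any orthogonal set and $|\Lambda|\le|\mathbb{F}_3^2|=9$. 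For equality, a set of size $9$ realizes every nonzero residue $v\in\mathbb{F}_3^2$ as a difference, and by the computation above each such $v$ must satisfy $v\in(M^t)^k L\pmod 3$ for some $k$; thus $9$ is attainable iff the $\langle M^t\rangle$-orbit of the line $L$ exhausts all four lines of $\mathbb{P}^1(\mathbb{F}_3)$, i.e. iff $\overline{M}$ acts on $\mathbb{P}^1(\mathbb{F}_3)$ as a $4$-cycle. Enumerating the six $4$-cycles in $PGL_2(\mathbb{F}_3)\cong S_4$ and lifting through $\pm I$ produces exactly the twelve matrices of $\mathfrak{M}_2$; conversely one builds a realizing $\Lambda$ of size $9$ by picking, for each residue, an integer representative of the corresponding coset of zeros.

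I expect the main obstacle to be the converse direction of (i) in the degenerate case $3\mid\det M$ with $\overline{M}\notin\mathfrak{M}_1$, where $\overline{M}$ drops rank and the clean $GL_2(\mathbb{F}_3)$-injectivity of part (ii) is unavailable; there one must establish non-spectrality directly, for instance by showing that no maximal orthogonal system can satisfy the Jorgensen--Pedersen completeness identity $\sum_{\lambda\in\Lambda}|\widehat{\mu_{M,\mathcal D}}(\xi+\lambda)|^2=1$. A secondary technical point is the lifting step in (ii): converting the purely mod-$3$ orbit condition into a genuine size-$9$ family of rational frequencies whose pairwise differences are actual zeros of $\widehat{\mu_{M,\mathcal D}}$ requires tracking the tower of lattices $(M^t)^k\mathbb{Z}^2$ rather than their residues alone.
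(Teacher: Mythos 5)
You should first note that the paper never proves Theorem \ref{th(1.6)} at all: it is imported from \cite{An-He-Tao_2015} and \cite{Liu-Dong-Li_2017} and used as a black box (in Corollary \ref{th(1.7)} and, via \eqref{eq(4.8.1)}, in Theorem \ref{th(4.4)}). So your attempt must stand as a self-contained proof, and judged that way it has a genuine hole exactly where the theorem is hard. Your correct parts: the equivalence ``admissible $\Leftrightarrow M^*(1,-1)^t\in3\mathbb{Z}^2\Leftrightarrow M\in\mathfrak{M}_1\pmod 3$'' (with the explicit $S=\{0,w_1,w_2\}$), the bound $n^*\le 9$ when $3\nmid\det M$, and the identification of attainability of $9$ with $\overline{M}$ acting as a $4$-cycle on $\mathbb{P}^1(\mathbb{F}_3)$, i.e.\ $M\in\mathfrak{M}_2\pmod 3$. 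The hole is ``spectral $\Rightarrow$ admissible'' in part (i) when $\det M\in3\mathbb{Z}$ and $\overline{M}\notin\mathfrak{M}_1$, which you explicitly defer; this is the main content of \cite{An-He-Tao_2015}, and no counting argument can close it. Indeed, if the minimal $j_0$ with $(M^*)^{j_0}(1,-1)^t\in3\mathbb{Z}^2$ satisfies $j_0\ge2$ (possible only when $3\mid\det M$), then $(M^*)^{j_0}\mathcal{Z}_{\mathcal D}^2\cap\mathbb{Z}^2\neq\emptyset$, so Lemma \ref{th(2.5.1)} gives \emph{infinitely many} mutually orthogonal exponentials; non-spectrality then cannot follow from cardinality bounds, nor ``directly'' from the completeness identity. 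The tool that actually closes this case is the Dai--He--Lau convolution criterion (Lemma \ref{lem4.4}), packaged in this paper as Theorem \ref{th(4.3.1)} and deployed in Case II of the proof of Theorem \ref{th(4.4)}: one writes $\mu_{M,\mathcal D}$ as a convolution of non-Dirac factors and shows every orthogonal set of $\mu_{M,\mathcal D}$ is already an orthogonal set of one factor, hence never complete. (Also note that in the complementary subcase $3\mid\det M$ with $(M^*)^{j}(1,-1)^t\notin3\mathbb{Z}^2$ for all $j$, your appeal to part (ii) is unavailable, since (ii) assumes $3\nmid\det M$; there you need Lemma \ref{th(2.5.1)} to conclude finiteness and hence non-spectrality.)

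The second gap is the ``if'' direction of (ii): producing $9$ orthogonal exponentials when $\overline{M}\in\mathfrak{M}_2$. Your recipe --- one integer representative per nonzero residue, taken from ``the corresponding coset of zeros'' --- does not work as stated, because membership of a difference $\delta$ in $(M^*)^m(\mathcal{Z}_{\mathcal D}^2+\mathbb{Z}^2)$ is the lattice condition $3\delta\in(M^*)^m(L+3\mathbb{Z}^2)$, $L=\{(1,2)^t,(2,1)^t\}$, not merely a congruence mod $3$; representatives attached to different powers $k$ have differences that generically violate it, since $(M^*)^{-1}$ has denominator $\det M\neq\pm1$. You flag this (``tracking the tower of lattices'') but do not resolve it. The repair is to place all nine points at one common level: take $\Lambda=\tfrac13(M^*)^4R$ with $R\subset\mathbb{Z}^2$ a complete residue system mod $3$. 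Since $\overline{M}$ is a $4$-cycle, $(M^*)^4\equiv\pm I\pmod 3$ and the sets $(M^*)^jL$, $0\le j\le3$, exhaust the nonzero residues; hence every difference $\tfrac13(M^*)^4(r-r')$ with $r\not\equiv r'$ lies in $(M^*)^m\bigl(\tfrac13L+\mathbb{Z}^2\bigr)\subset\mathcal{Z}(\hat{\mu}_{M,\mathcal D})$ for a suitable $1\le m\le4$. With that one-line fix, and with the convolution argument supplied for part (i), your outline becomes a proof; as written, the two missing directions are precisely the hard parts of the theorem.
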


By Theorem \ref{th(1.5)}, we can easily extend the conclusion of Theorem \ref{th(1.6)}(ii) to the following general  form.

\begin{cor}\label{th(1.7)}
Let $M\in M_2(\mathbb{Z})$ be an  expanding matrix and $D=\{(0,0)^t,(\alpha_1,\alpha_2)^t,(\beta_1,\beta_2)^t\}$  be an integer digit set  with $\alpha_1\beta_2-\alpha_2\beta_1\notin 3\mathbb{Z}$. If $\det(M)\notin 3\mathbb{Z}$, then $L^2(\mu_{M,D})$ contains at most $9$ mutually
orthogonal exponential functions, and the number $9$ is the best. Moreover, the best number can be attained if and only if $AMB \in \mathfrak{M}_2 \pmod 3$, where $B=\left[ {\begin{array}{*{20}{c}}
\alpha_1&\beta_1\\
\alpha_2&\beta_2
\end{array}} \right]$ and $A\in GL_2(3)$ satisfy $AB=I \pmod 3$.
\end{cor}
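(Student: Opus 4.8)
The plan is to reduce the general digit set $D$ to the standard Sierpinski set $\mathcal D=\{(0,0)^t,(1,0)^t,(0,1)^t\}$ by a $GL_2(3)$-conjugation, and then invoke Theorem \ref{th(1.5)} together with Theorem \ref{th(1.6)}(ii). Set $B=\begin{bmatrix}\alpha_1&\beta_1\\\alpha_2&\beta_2\end{bmatrix}$. Since $\det B=\alpha_1\beta_2-\alpha_2\beta_1\notin 3\mathbb Z$, we have $\det B\not\equiv 0\pmod 3$, so $B\in GL_2(3)$; moreover $B\mathcal D=\{(0,0)^t,(\alpha_1,\alpha_2)^t,(\beta_1,\beta_2)^t\}=D$. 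Choose $A\in GL_2(3)$ with $AB\equiv I\pmod 3$ and put $\tilde M=AMB$, $\tilde D=\mathcal D$. Then $\tilde M=AMB$ and $D=B\tilde D$, so $(M,D)$ and $(\tilde M,\mathcal D)$ are conjugate in $GL_2(3)$ in the sense of case (i) of the definition.

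Next I would check the hypotheses of Theorem \ref{th(1.5)} with $p=3$, $n=2$. As recorded in the introduction, $\mathcal Z^2_{\mathcal D}=\{(\tfrac13,\tfrac23)^t,(\tfrac23,\tfrac13)^t\}\subset\mathring E_3^2$, so the zero-set condition $\mathcal Z^2_{\tilde D}\subset\mathring E^2_3$ is satisfied. Since $\det M\notin 3\mathbb Z$ by hypothesis, Theorem \ref{th(1.5)} yields $n^*(\mu_{M,D})=n^*(\mu_{\tilde M,\mathcal D})\le 3^2=9$, which gives the ``at most $9$'' assertion.

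It remains to identify when the value $9$ is attained. From $AB\equiv I\pmod 3$ we get $\det A\,\det B=\det(AB)\equiv 1\pmod3$, hence $\det\tilde M=\det A\,\det M\,\det B\equiv\det M\not\equiv0\pmod 3$. Thus Theorem \ref{th(1.6)}(ii) applies to $\mu_{\tilde M,\mathcal D}$ and gives $n^*(\mu_{\tilde M,\mathcal D})=9$ if and only if $\tilde M\in\mathfrak M_2\pmod 3$, i.e.\ $AMB\in\mathfrak M_2\pmod3$. Combining with the identity $n^*(\mu_{M,D})=n^*(\mu_{\tilde M,\mathcal D})$ from the previous step proves the stated characterization; the bound is best because $\mathfrak M_2\neq\varnothing$, so $9$ is genuinely achieved.

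The one point that needs care --- and the main obstacle --- is that Theorem \ref{th(1.6)}(ii) requires $\tilde M=AMB$ to be an expanding integer matrix, whereas $A,B$ are only prescribed modulo $3$. I would resolve this by exploiting the freedom in the integer representative of $A$: replacing $A$ by $A+3kI$ leaves $AB\equiv I\pmod3$ and the residue $\tilde M\bmod 3=AMB\bmod 3$ unchanged, while $\tilde M=(A+3kI)MB=(3kI+A)(MB)$ has all eigenvalues of modulus tending to $\infty$ as $k\to\infty$ (since $MB$ is nonsingular), hence is expanding once $k$ is large enough. Because $\mathfrak M_2$-membership is insensitive to this choice, the characterization is unambiguous, and the whole argument is a clean two-step reduction whose only delicate ingredient is this adjustment of the representative $A$.
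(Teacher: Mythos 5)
Your proposal is correct and follows essentially the same route as the paper's own proof: take $B$ to be the digit matrix, set $\tilde M=AMB$ and $\tilde D=\mathcal D$ so that $(M,D)$ and $(\tilde M,\mathcal D)$ are conjugate in $GL_2(3)$, transfer $n^*(\mu_{M,D})=n^*(\mu_{\tilde M,\mathcal D})\leq 9$ via Theorem \ref{th(1.5)}, and then read off the characterization from Theorem \ref{th(1.6)}(ii) using $\det(\tilde M)\equiv\det(M)\not\equiv 0\pmod 3$. Your final adjustment --- replacing $A$ by $A+3kI$ so that $\tilde M=AMB$ is genuinely expanding, and noting that this changes neither the conjugacy nor the residue of $AMB$ modulo $3$ --- is the one point where you go beyond the paper, whose proof applies Theorem \ref{th(1.6)}(ii) to $AMB$ without verifying expansiveness; this extra care closes a small gap rather than constituting a different approach.
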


For the equivalences of Theorem \ref{th(1.6)}(i), the similar conclusions can also be obtained for the digit set $D=\{(0,0)^t,(\alpha_1,\alpha_2)^t,(\beta_1,\beta_2)^t\}$ with $\alpha_1\beta_2-\alpha_2\beta_1\notin 3\mathbb{Z}$.

\begin{thm}\label{th(1.8)}
Let $M\in M_2(\mathbb{Z})$ be an expanding matrix and $D=\{(0,0)^t,(\alpha_1,\alpha_2)^t,(\beta_1,\beta_2)^t\}$  be an integer digit set  with $\alpha_1\beta_2-\alpha_2\beta_1\notin 3\mathbb{Z}$, and let $\mu_{M,D}$ be defined by \eqref{eq(1.1)}. Then the following statements  are equivalent:

(i) \ \  $\mu_{M,D}$ is a spectral measure;

(ii) \ \ $(M, D)$ is admissible;

(iii) \ \  $(AMB)^*(1,-1)^t\in 3\mathbb{Z}^2$,
where $B=\left[ {\begin{array}{*{20}{c}}
\alpha_1&\beta_1\\
\alpha_2&\beta_2
\end{array}} \right]$ and $A\in GL_2(3)$ satisfy  $AB=I\pmod 3$.
\end{thm}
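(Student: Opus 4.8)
Before the author's argument, here is how I would attack Theorem~\ref{th(1.8)}.

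The plan is to realize $(M,D)$ as a $GL_2(3)$-conjugate of the standard Sierpinski pair and then transport the three assertions through Theorems~\ref{th(1.4)}, \ref{th(1.6)} and \ref{th(1.3)}. First I would observe that, with $B$ the matrix in the statement, one has $D=B\mathcal{D}$ for the digit set $\mathcal{D}$ of \eqref{eq(1.9)}, and that the hypothesis $\alpha_1\beta_2-\alpha_2\beta_1\notin 3\mathbb{Z}$ is exactly $\det B\not\equiv 0\pmod 3$, so $B\in GL_2(3)$ and the matrix $A\in GL_2(3)$ with $AB=I\pmod 3$ exists. Setting $\tilde{M}=AMB$, the pairs $(M,D)$ and $(\tilde{M},\mathcal{D})$ are conjugate in $GL_2(3)$ in the sense of condition~(i) of the conjugacy definition. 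Since $\mathcal{D}=D_1$, the zero set computed in the introduction gives $\mathcal{Z}_{\mathcal{D}}^2=\{(1/3,2/3)^t,(2/3,1/3)^t\}\subset\mathring{E}_3^2$, so the standing hypothesis of Theorems~\ref{th(1.4)} and \ref{th(1.5)} holds with $p=3$ and $\tilde{D}=\mathcal{D}$.

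I would establish (ii)$\Leftrightarrow$(iii) purely algebraically. By Theorem~\ref{th(1.4)}, $(M,D)$ is admissible if and only if $(\tilde{M},\mathcal{D})$ is admissible, and by Theorem~\ref{th(1.6)}(i) the latter holds if and only if $\tilde{M}\in\mathfrak{M}_1\pmod 3$. The key combinatorial observation is that, reducing the generators of $\mathfrak{M}_1$ in \eqref{eq(1.11)} modulo $3$, $\mathfrak{M}_1\pmod 3$ is precisely the set of $2\times2$ matrices over $\mathbb{F}_3$ whose two rows coincide (these nine matrices exhaust all equal-row matrices); equivalently $N\in\mathfrak{M}_1\pmod 3$ iff $(1,-1)N\equiv(0,0)\pmod 3$, i.e. iff $N^*(1,-1)^t\in 3\mathbb{Z}^2$. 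Applying this with $N=\tilde{M}=AMB$ turns the admissibility of $(\tilde{M},\mathcal{D})$ into $(AMB)^*(1,-1)^t\in 3\mathbb{Z}^2$, which is exactly (iii).

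The implication (ii)$\Rightarrow$(i) is immediate from Theorem~\ref{th(1.3)}. The real content is the converse (i)$\Rightarrow$(ii), which I would argue by contraposition: assuming $(M,D)$ is not admissible, equivalently $\tilde{M}\notin\mathfrak{M}_1\pmod 3$, I want $\mu_{M,D}$ non-spectral. I would split on $\det M$ modulo $3$. When $\det M\notin 3\mathbb{Z}$ the argument is clean: Theorem~\ref{th(1.5)} gives $n^*(\mu_{M,D})=n^*(\mu_{\tilde{M},\mathcal{D}})\le 9<\infty$, so $L^2(\mu_{M,D})$ cannot contain an infinite orthonormal system of exponentials and $\mu_{M,D}$ fails to be spectral. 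This case is automatically of the non-admissible type, since $\det\tilde{M}\equiv\det M\pmod 3$ (as $\det A\det B\equiv\det(AB)\equiv 1$) while every matrix in $\mathfrak{M}_1$ has equal rows and hence vanishing determinant mod $3$.

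The remaining, genuinely hard case is $3\mid\det M$ together with $\tilde{M}\notin\mathfrak{M}_1\pmod 3$: here $L^2(\mu_{M,D})$ does carry infinitely many orthogonal exponentials, so Theorem~\ref{th(1.5)} no longer rules out spectrality and the finite-cardinality shortcut is unavailable. To handle it I would transport spectrality itself, not merely admissibility. Using $D=B\mathcal{D}$ and the real similarity of Theorem~\ref{th(1.1)}(ii) with the matrix $B$, $\mu_{M,D}$ is spectral if and only if $\mu_{B^{-1}MB,\mathcal{D}}$ is spectral; the obstacle is that $B^{-1}MB$ need not be an integer matrix, so Theorem~\ref{th(1.6)}(i) cannot be quoted verbatim for it. The crux is therefore to show that the spectrality of the Sierpinski-type measure $\mu_{B^{-1}MB,\mathcal{D}}$ is governed by the same mod $3$ datum as that of the integer model $\mu_{\tilde{M},\mathcal{D}}$, i.e. to promote the admissibility transfer of Theorem~\ref{th(1.4)} to a genuine spectrality transfer under $GL_2(3)$-conjugation. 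I anticipate this requires analysing the tree of maximal orthogonal sets, equivalently the common zero structure $\bigcup_{k\ge 1}(M^t)^k\mathcal{Z}(m_D)$, and showing that when the rows of $\tilde{M}$ differ mod $3$ no maximal orthogonal set can be complete, mirroring the obstruction that makes $\mu_{\tilde{M},\mathcal{D}}$ non-spectral in Theorem~\ref{th(1.6)}(i). This completeness argument is the step I expect to be the bottleneck; once it is in place, (i)$\Leftrightarrow$(ii)$\Leftrightarrow$(iii) follows by assembling the three reductions above.
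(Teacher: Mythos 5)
Your reductions for (iii)$\Rightarrow$(ii) (conjugating $(M,D)$ to $(\tilde M,\mathcal{D})$ with $\tilde M=AMB$, invoking Theorem~\ref{th(1.4)} and Theorem~\ref{th(1.6)}(i), and observing that $\mathfrak{M}_1\pmod 3$ is exactly the set of equal-row matrices, i.e.\ $N^*(1,-1)^t\in 3\mathbb{Z}^2$), for (ii)$\Rightarrow$(i) (Theorem~\ref{th(1.3)}), and for (i)$\Rightarrow$(iii) when $\det M\notin 3\mathbb{Z}$ (finiteness of $n^*$ via Theorem~\ref{th(1.5)}) are all correct and agree with the paper. But the remaining case is a genuine gap, and you concede as much: you only describe a hoped-for ``spectrality transfer'' under $GL_2(3)$-conjugation and an unexecuted analysis of maximal orthogonal sets, and this is precisely where the real content of the theorem lies. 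Moreover, your framing of that case contains a false claim: $3\mid\det M$ does \emph{not} imply that $L^2(\mu_{M,D})$ carries infinitely many orthogonal exponentials. By Lemma~\ref{th(2.5.1)}, transported through the conjugacy, this happens if and only if $(AMB)^{*j}(1,-1)^t\in 3\mathbb{Z}^2$ for some $j$, which can fail even when $3\mid\det M$. The correct dichotomy --- the one the paper uses --- is on the orbit of $(1,-1)^t$ modulo $3$ under $\tilde M^*$, not on $\det M$ modulo $3$.

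The paper closes the hard implication with two ingredients absent from your outline. First, when $(AMB)^{*j}(1,-1)^t\notin 3\mathbb{Z}^2$ for \emph{every} $j\geq 1$ (this subsumes $\det M\notin 3\mathbb{Z}$ but also covers part of the case $3\mid\det M$), it bounds the number of orthogonal exponentials by $9$ directly from Proposition~\ref{lem(4.3)}(i), which unlike Theorem~\ref{th(1.5)} needs no hypothesis on $\det M$: every point of $\mathcal{Z}(\hat{\mu}_{M,D})$ lies in $\frac{1}{(\det(AB))^{j+1}}A^*\left(\tilde{M}^{*j}\mathcal{Z}(m_{\mathcal{D}})+\mathbb{Z}^2\right)$ with $\tilde{M}^{*j}\mathcal{Z}(m_{\mathcal{D}})\subset\mathring{E}_3^2\pmod{\mathbb{Z}^2}$, and a pigeonhole argument on ten putative orthogonal points produces a difference that cannot lie in the zero set. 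Second --- the key new idea --- when there is a $j_0\geq 2$ with $(AMB)^{*j}(1,-1)^t\in 3\mathbb{Z}^2$ for all $j\geq j_0$ but not for $j<j_0$, the paper proves a non-spectrality criterion (Theorem~\ref{th(4.3.1)}): writing $\mu_{M,D}$ as a convolution omitting, respectively keeping, the level-$j_0$ factor $\delta_{M^{-j_0}D}$, one shows that any orthogonal set of $L^2(\mu_{M,D})$ is already an orthogonal set of one of the two proper convolution factors $\mu_1,\mu_2$, whence Dai--He--Lau's lemma (Lemma~\ref{lem4.4}) forbids it from being a spectrum. Nothing in your proposal points toward this convolution-factor obstruction, and without it (or a worked-out substitute for your completeness analysis) the implication (i)$\Rightarrow$(iii) remains unproven.
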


The organization of the paper is as follows. We give several preparatory definitions and  conclusions in Section 2, and prove Theorem \ref{th(1.5)}, Theorem \ref{th(1.6)} and Corollary  \ref{th(1.7)} in Section 3. As a main application, we complete the proof of  Theorem \ref{th(1.8)} in Section 4.

\section{\bf Preliminaries\label{sect.2}}

In this section, we give some  preliminary definitions and
propositions. For the self-affine measure $\mu_{M,D}$ given in \eqref{eq(1.1)}, the Fourier transform of $\mu_{M,D}$ is defined as usual,
\begin{equation}\label{1.2}
\hat{\mu}_{M,D}(\xi)=\int e^{2\pi i\langle x,\xi\rangle }d\mu_{M,D}(x)=\prod_{j=1}^\infty m_D({M^{*}}^{-{j}}\xi), \quad  \xi\in\mathbb{R}^n,
\end{equation}
where $M^*$ denotes the transposed  conjugate of $M$, and $m_D(x)$ is the \emph{mask polynomial} of $D$ defined by \eqref{eq(1.4.0)}.

Let $\mathcal{Z}(f)$ denote the zero set of the function $f$, i.e., $\mathcal{Z}(f)=\{x\in\mathbb{R}^n:f(x)=0\}$.
It follows from \eqref{1.2} that
\begin{equation}\label{1.3}
\mathcal{Z}(\hat{\mu}_{M, D})=\bigcup_{j=1}^{\infty}M^{*j}\mathcal{Z}(m_D).
\end{equation}
For any $\lambda_1, \lambda_2\in \mathbb{R}^n$, $\lambda_1\neq \lambda_2$, the orthogonality condition
$$
0=\langle e^{2\pi i \langle \lambda_1,x\rangle},e^{2\pi i \langle \lambda_2,x\rangle}\rangle_{L^2(\mu_{M,D})}=\int e^{2\pi i \langle\lambda_1-\lambda_2,x\rangle}d\mu_{M,D}(x)=\hat{\mu}_{M,D}(\lambda_1-\lambda_2)
$$
relates to $\mathcal{Z}(\hat{\mu}_{M,D})$ directly. It is easy to see that for a countable subset $\Lambda\subset\mathbb{R}^n$,  $E(\Lambda)=\{e^{2\pi i\langle\lambda,x\rangle}:\lambda\in\Lambda\}$ is an orthogonal family of $L^2(\mu_{M,D})$ if and only if
 \begin{equation}\label{1.4}
 (\Lambda-\Lambda)\setminus\{0\}\subset\mathcal{Z}(\hat{\mu}_{M,D}).
\end{equation}

For a positive number $m$, let $\varphi(m)$ denote the \emph{Euler's phi function} which  equal to the number of integers in the set $\{1,2,\ldots, m-1\}$ that are relatively prime to $m$.  For more information about the Euler's phi function, the readers can refer to \cite{Nathanson_1996}. The following lemma is the famous \emph{Euler's theorem}.

\begin{lemma}\cite{Nathanson_1996}\label{th(2.1)}
Let $m$ be a positive integer, and let $N$ be an integer relatively prime to $m$. Then $N^{\varphi(m)}=1\pmod m.$
\end{lemma}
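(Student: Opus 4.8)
The plan is to give the classical elementary proof via a reduced residue system, keeping the argument self-contained rather than invoking group-theoretic machinery such as Lagrange's theorem. First I would list the $\varphi(m)$ integers in $\{1,2,\ldots,m-1\}$ that are relatively prime to $m$, say $a_1,a_2,\ldots,a_{\varphi(m)}$, and study the effect of multiplication by $N$ on these residues modulo $m$.

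The central claim is that the assignment $a_i\mapsto Na_i\pmod m$ induces a permutation of $\{a_1,\ldots,a_{\varphi(m)}\}$ modulo $m$. To establish this I would check two points. First, each product $Na_i$ is again relatively prime to $m$: since $\gcd(N,m)=1$ and $\gcd(a_i,m)=1$, any prime dividing both $Na_i$ and $m$ would have to divide $N$ or $a_i$, contradicting coprimality; hence $Na_i$ is congruent modulo $m$ to exactly one of the $a_j$. Second, the map is injective modulo $m$: if $Na_i\equiv Na_j\pmod m$, then $m\mid N(a_i-a_j)$, and because $\gcd(N,m)=1$ this forces $m\mid(a_i-a_j)$, so $a_i=a_j$. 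An injective self-map of a finite set is a bijection, so $\{Na_1,\ldots,Na_{\varphi(m)}\}$ is a rearrangement of $\{a_1,\ldots,a_{\varphi(m)}\}$ modulo $m$.

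With the permutation in hand, I would compare the two products. Writing the congruences as $Na_i\equiv a_{\sigma(i)}\pmod m$ and multiplying over all $i$ yields
$$N^{\varphi(m)}\prod_{i=1}^{\varphi(m)}a_i\equiv\prod_{i=1}^{\varphi(m)}a_i\pmod m.$$
The product $P:=\prod_{i=1}^{\varphi(m)}a_i$ is a product of factors each coprime to $m$, hence $\gcd(P,m)=1$. Thus $m\mid(N^{\varphi(m)}-1)P$ together with $\gcd(P,m)=1$ gives $m\mid(N^{\varphi(m)}-1)$, that is, $N^{\varphi(m)}\equiv1\pmod m$, as required.

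I expect the only genuine obstacle to be the injectivity half of the permutation claim, which is exactly where the hypothesis $\gcd(N,m)=1$ enters essentially; the final cancellation of $P$ rests on the same coprimality principle. Everything else amounts to bookkeeping with finite sets and products.
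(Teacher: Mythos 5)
Your proof is correct and is the classical reduced-residue-system argument for Euler's theorem; the paper itself gives no proof, citing \cite{Nathanson_1996}, where this is exactly the standard argument one finds. Nothing to fix: the two coprimality uses (injectivity of $a_i \mapsto Na_i$ and cancellation of $P=\prod_i a_i$) are precisely where the hypothesis $\gcd(N,m)=1$ is needed, and you have handled both correctly.
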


\begin{de}\label{de(2.9)}
Let $M\in GL_n(p)$, then the least positive integer $e$ for which $M^e=I(\mod p)$ is called the order of
$M$ and denoted by $O_p(M)$.
\end{de}

\begin{lemma} \cite{Ghorpade-Hasan-Kumari_2011}\label{th(2.4)}
Let $M\in GL_n(p)$, then $O_p(M)\leq p^n-1$.
\end{lemma}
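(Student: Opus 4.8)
The plan is to work inside the commutative subalgebra of $M_n(\mathbb{F}_p)$ generated by $M$, rather than to rely on Lagrange's theorem applied to the whole group $GL_n(p)$ (which would only yield the far weaker divisibility $O_p(M)\mid |GL_n(p)|$). First I would invoke the Cayley--Hamilton theorem over the field $\mathbb{F}_p$: the matrix $M$ satisfies its characteristic polynomial, a monic polynomial of degree $n$. Consequently $M^n$ can be expressed as an $\mathbb{F}_p$-linear combination of $I, M, \dots, M^{n-1}$, and by induction so can every higher power. Hence the algebra $\mathbb{F}_p[M]$ of all polynomials in $M$ with coefficients in $\mathbb{F}_p$ is spanned over $\mathbb{F}_p$ by $\{I, M, \dots, M^{n-1}\}$, so it contains at most $p^n$ elements.

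Next I would observe that every power $M^k$ with $k\geq 0$ lies in $\mathbb{F}_p[M]$ and is invertible, hence nonzero, since $M\in GL_n(p)$. Writing $e=O_p(M)$, the minimality in Definition \ref{de(2.9)} forces $I, M, M^2, \dots, M^{e-1}$ to be pairwise distinct: if $M^i=M^j$ with $0\le i<j\le e-1$, then $M^{j-i}=I\pmod p$ with $0<j-i<e$, contradicting the definition of $e$. Thus these are $e$ distinct nonzero elements of $\mathbb{F}_p[M]$.

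Finally, since $\mathbb{F}_p[M]$ is a ring containing the zero element and has at most $p^n$ elements in total, it has at most $p^n-1$ nonzero elements. The $e$ distinct nonzero powers exhibited above therefore satisfy $e\leq p^n-1$, which is exactly the asserted bound $O_p(M)\leq p^n-1$.

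The argument is at heart a counting argument, so I do not expect a serious obstacle; the only step requiring genuine care is the bound on the size of $\mathbb{F}_p[M]$, for which Cayley--Hamilton over the finite field $\mathbb{F}_p$ is precisely the right tool, together with the remark that invertibility of $M$ is what guarantees all its powers avoid the zero element. It is worth noting in passing that the bound is sharp: the value $p^n-1$ is attained exactly when $M$ generates the multiplicative group $\mathbb{F}_{p^n}^\times$ (a Singer cycle), that is, when the characteristic polynomial of $M$ is a primitive polynomial of degree $n$ over $\mathbb{F}_p$.
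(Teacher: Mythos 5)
Your proof is correct and complete. Note, however, that the paper does not prove this lemma at all: it is imported verbatim from \cite{Ghorpade-Hasan-Kumari_2011}, where the bound arises in the context of Singer cycles and primitive polynomials, so there is no internal argument to compare against. Your route is the standard self-contained one: Cayley--Hamilton over $\mathbb{F}_p$ shows $\mathbb{F}_p[M]$ is spanned by $I,M,\dots,M^{n-1}$ and hence has at most $p^n$ elements; invertibility of $M$ keeps every power away from $0$; and minimality of $e=O_p(M)$ (together with invertibility, which lets you cancel a common factor $M^i$) makes $I,M,\dots,M^{e-1}$ pairwise distinct, forcing $e\leq p^n-1$. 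Each step is airtight, and the same pigeonhole count even re-proves the existence of the order without appealing to finiteness of $GL_n(p)$, although Definition \ref{de(2.9)} already presupposes it. What your argument buys is that Lemma \ref{th(2.4)} becomes elementary and independent of the cited literature; what the citation buys the authors is the finer structural information you only mention in passing, namely that the bound is sharp and that equality holds exactly for Singer cycles, i.e.\ matrices whose characteristic polynomial is primitive of degree $n$ over $\mathbb{F}_p$ --- a fact the paper does not actually need, since only the upper bound enters the proof of Proposition \ref{lem(4.3)} and Theorem \ref{th(3.2)}.
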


In order to prove Theorem \ref{th(1.5)}, we need the following Lemmas. In \cite{Li_2015_1}, Li gave a sufficient and necessary condition for the infinite $\mu_{M,D}$-orthogonality.

\begin{lemma}\cite{Li_2015_1}\label{th(2.5.1)}
For an expanding integer matrix $M\in M_n(\mathbb{Z})$ and a finite digit set $D\subset \mathbb{Z}^n$, let $\mu_{M,D}$ and $\mathcal{Z}_{D}^n$ be defined by \eqref{eq(1.1)} and \eqref{eq(1.5.0)}, respectively. Suppose that  $\mathcal{Z}_{D}^n\subset \mathbb{Q}^n $ is a finite set, then $L^2(\mu_{M,D})$ contains an infinite orthogonal set of exponential functions if and only if $M^{*j}(\mathcal{Z}_{D}^n)\cap \mathbb{Z}^n\neq\emptyset$ for some $j\in \mathbb{N}$.
\end{lemma}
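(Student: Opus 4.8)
The plan is to work entirely with the description $\mathcal{Z}(\hat{\mu}_{M,D})=\bigcup_{j\ge1}M^{*j}\mathcal{Z}(m_D)$ from \eqref{1.3} together with $\mathcal{Z}(m_D)=\mathcal{Z}_D^n+\mathbb{Z}^n$ from \eqref{eq(1.6.0)}, and the orthogonality criterion \eqref{1.4}: a countable $\Lambda$ is orthogonal for $L^2(\mu_{M,D})$ precisely when $(\Lambda-\Lambda)\setminus\{0\}\subseteq\mathcal{Z}(\hat{\mu}_{M,D})$. Throughout I will use two elementary facts. First, $0\notin\mathcal{Z}_D^n$ because $m_D(0)=1$, so every $z\in\mathcal{Z}_D^n$ is nonzero. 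Second, $\mathcal{Z}(m_D)$ is symmetric, since $m_D(-x)=\overline{m_D(x)}$, whence $\mathcal{Z}(\hat{\mu}_{M,D})$ is symmetric as well; this lets me check pairwise differences in one order only.

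For sufficiency, suppose $a:=M^{*j_0}z\in\mathbb{Z}^n$ for some $z\in\mathcal{Z}_D^n$ and some $j_0\in\mathbb{N}$; then $a\ne0$ since $z\ne0$ and $M^*$ is invertible. Writing $N:=M^{*j_0}$ (again expanding), I would propose the explicit set $\Lambda=\{N^k a:k\ge0\}$, which is infinite because $N$ is expanding and $a\ne0$, so $\|N^k a\|\to\infty$. To verify orthogonality, take $k_1>k_2\ge0$ and factor $N^{k_1}a-N^{k_2}a=N^{k_2+1}\big(N^{\,k_1-k_2}z-z\big)$, using $a=Nz$. Since $k_1-k_2\ge1$ and $Nz=a\in\mathbb{Z}^n$, the vector $N^{\,k_1-k_2}z$ is an integer, so $N^{\,k_1-k_2}z-z\in -z+\mathbb{Z}^n\subseteq\mathcal{Z}(m_D)$ by symmetry of $\mathcal{Z}(m_D)$ and \eqref{eq(1.6.0)}. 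Hence the difference lies in $M^{*j_0(k_2+1)}\mathcal{Z}(m_D)\subseteq\mathcal{Z}(\hat{\mu}_{M,D})$, and \eqref{1.4} shows $\Lambda$ is an infinite orthogonal set.

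For necessity I would argue by contraposition: assume $M^{*j}(\mathcal{Z}_D^n)\cap\mathbb{Z}^n=\emptyset$ for every $j\ge1$ and show every orthogonal set is finite. The key finiteness input is that, with $q$ a common denominator of the finite rational set $\mathcal{Z}_D^n$, every $M^{*j}z$ lies in $\tfrac1q\mathbb{Z}^n$, so the set of fractional parts $F:=\{\,\{M^{*j}z\}:j\ge1,\ z\in\mathcal{Z}_D^n\,\}$ is a finite subset of $\tfrac1q\mathbb{Z}^n\cap[0,1)^n$. From \eqref{1.3} and \eqref{eq(1.6.0)} every element of $\mathcal{Z}(\hat{\mu}_{M,D})$ has fractional part in $F$, so $\mathcal{Z}(\hat{\mu}_{M,D})\subseteq F+\mathbb{Z}^n$; and the standing assumption says exactly $0\notin F$, which forces $\mathcal{Z}(\hat{\mu}_{M,D})\cap\mathbb{Z}^n=\emptyset$. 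Now let $\Lambda\ni0$ be orthogonal. By \eqref{1.4} each nonzero $\lambda\in\Lambda$ has $\{\lambda\}\in F$, so the fractional parts of $\Lambda$ take only finitely many values; were $\Lambda$ infinite, pigeonhole would yield an infinite $\Lambda'\subseteq\Lambda$ inside a single coset of $\mathbb{Z}^n$, whence $(\Lambda'-\Lambda')\setminus\{0\}\subseteq\mathcal{Z}(\hat{\mu}_{M,D})\cap\mathbb{Z}^n=\emptyset$, which is impossible. Thus $\Lambda$ is finite.

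I expect the main obstacle to be the necessity direction, specifically the passage from ``no integer point in the forward orbit'' to ``every orthogonal set is finite.'' The substantive point is that rationality and finiteness of $\mathcal{Z}_D^n$ confine all fractional parts of $\mathcal{Z}(\hat{\mu}_{M,D})$ to the single finite set $F$; once this is in place, the pigeonhole reduction to one $\mathbb{Z}^n$-coset, together with the emptiness of $\mathcal{Z}(\hat{\mu}_{M,D})\cap\mathbb{Z}^n$, closes the argument. The sufficiency direction is comparatively mechanical, the only subtlety being that iterating $M^{*j_0}$ keeps the relevant vectors integral while the symmetry of $\mathcal{Z}(m_D)$ absorbs the sign arising in the factorization of the differences.
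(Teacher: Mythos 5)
Your proof is correct, but there is no internal argument to compare it against: the paper states Lemma \ref{th(2.5.1)} as a quoted result from \cite{Li_2015_1} and uses it as a black box, so any proof is necessarily independent of the paper's text.

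Taken on its own, your argument is sound in both directions and follows the natural route through \eqref{1.3}, \eqref{eq(1.6.0)} and \eqref{1.4}. For sufficiency, the factorization $N^{k_1}a-N^{k_2}a=N^{k_2+1}\bigl(N^{k_1-k_2}z-z\bigr)$ with $a=Nz\in\mathbb{Z}^n$ places every difference in $M^{*j_0(k_2+1)}\mathcal{Z}(m_D)\subset\mathcal{Z}(\hat{\mu}_{M,D})$; the symmetry $m_D(-x)=\overline{m_D(x)}$ together with $\mathbb{Z}^n$-periodicity legitimately absorbs the coset $-z+\mathbb{Z}^n$ into $\mathcal{Z}(m_D)$; and the differences are nonzero because $N^{k_1-k_2}z\in\mathbb{Z}^n$ while $z\in[0,1)^n\setminus\{0\}$ is not integral, so $\Lambda$ is genuinely an infinite orthogonal set. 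For necessity, the confinement $\mathcal{Z}(\hat{\mu}_{M,D})\subset F+\mathbb{Z}^n$ with $F\subset\frac1q\mathbb{Z}^n\cap[0,1)^n$ finite is exactly where the rationality and finiteness of $\mathcal{Z}_D^n$ enter, $0\notin F$ is precisely the negated orbit condition, and the pigeonhole then closes the argument; in fact you prove more than stated, namely the quantitative bound $|\Lambda|\leq |F|+1$, since two points of $\Lambda$ in one $\mathbb{Z}^n$-coset would have a nonzero integer difference, impossible as $\mathcal{Z}(\hat{\mu}_{M,D})\cap\mathbb{Z}^n=\emptyset$. Two cosmetic points deserve a sentence each in a final write-up: the normalization $0\in\Lambda$ is harmless because \eqref{1.4} depends only on $\Lambda-\Lambda$ (translate $\Lambda$, or fix $\lambda_0\in\Lambda$ and count cosets of $\lambda_0+F+\mathbb{Z}^n$ instead); and the case $j_0=0$ of the hypothesis is vacuous because $\mathcal{Z}_D^n\subset[0,1)^n\setminus\{0\}$ contains no integer point, so one may take $j_0\geq 1$, which your containment $M^{*j_0(k_2+1)}\mathcal{Z}(m_D)\subset\mathcal{Z}(\hat{\mu}_{M,D})$ tacitly requires.
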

\begin{lemma} \label{lem(2.5)}
Let $M\in M_n(\mathbb{Z})$ be an expanding matrix with $\det(M)=L$, and let $\mathring{E}_p^n$ be defined by \eqref{eq(1.7)}.  Then

(i) If $\gcd(L,p)=1$, then $M\mathring{E}_p^n=\mathring{E}_p^n  \pmod {\Bbb Z^n}$ and $L^{\varphi(p)j}(M^{j}\lambda+\Bbb Z^n)\subset M^{j}(\lambda+\Bbb Z^n)$ for any integer $j\geq 0$ and  $\lambda \in \mathring{E}_{p}^n$, where $\varphi$ is the Euler's phi function.

(ii) $A(\lambda+\Bbb Z^n)\subset\lambda+\Bbb Z^n$ for any $\lambda \in \mathring{E}_{p}^n$ and any integer matrix $A$ with $A=I \pmod p$.
\end{lemma}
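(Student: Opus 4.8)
The two parts are of quite different character, so I would treat them separately. For part (ii) I would argue directly: take $\lambda\in\mathring E_p^n$, so $\lambda=\frac1p\ell$ for some integer vector $\ell$ with $0\le \ell_k\le p-1$, not all zero. Given an integer matrix $A$ with $A\equiv I\pmod p$, write $A=I+pK$ for some $K\in M_n(\mathbb Z)$. Then for any $z\in\mathbb Z^n$,
\begin{equation*}
A(\lambda+z)=\lambda+z+pK\lambda+(A-I)z=\lambda+\bigl(z+K\ell+p Kz\bigr),
\end{equation*}
and since $K\ell+pKz\in\mathbb Z^n$ the whole expression lies in $\lambda+\mathbb Z^n$. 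This is the short, purely algebraic half.

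For part (i), the two assertions should be handled in sequence. First, $M\mathring E_p^n=\mathring E_p^n\pmod{\mathbb Z^n}$: reduction mod $p$ makes $M$ act as an element of $GL_n(p)$ precisely because $\gcd(\det M,p)=\gcd(L,p)=1$, so $M$ is invertible over $\mathbb F_p$. The map $\frac1p\ell\mapsto\frac1p(M\ell\bmod p)$ is then a bijection of $\frac1p\mathbb F_p^n$ onto itself, fixing $0$, hence restricting to a bijection of the nonzero classes $\mathring E_p^n$ modulo $\mathbb Z^n$. I would phrase this as: $M\lambda\equiv\frac1p(M\ell)\pmod{\mathbb Z^n}$, and $M\ell\not\equiv 0\pmod p$ iff $\ell\not\equiv0$, using invertibility of $M\bmod p$.

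The containment $L^{\varphi(p)j}(M^j\lambda+\mathbb Z^n)\subset M^j(\lambda+\mathbb Z^n)$ is the heart of the lemma and where I expect the real work. The idea is that $M^j\lambda$ differs from an element of $M^j(\lambda+\mathbb Z^n)$ by a lattice vector, and multiplying by the scalar $L^{\varphi(p)j}$ should convert a ``denominator-$p$'' obstruction into an integer one. Concretely, I would like to show that $L^{\varphi(p)j}M^j\lambda\equiv M^j(L^{\varphi(p)j}\lambda)$ lands in the correct coset; since $M^j\lambda+\mathbb Z^n$ and $\lambda+\mathbb Z^n$ are related by the bijection above, it suffices to understand how the scalar interacts with the denominator $p$. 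The key arithmetic input is Euler's theorem (Lemma~\ref{th(2.1)}): because $\gcd(L,p)=1$, we have $L^{\varphi(p)}\equiv1\pmod p$, so $L^{\varphi(p)j}\equiv1\pmod p$, i.e. $L^{\varphi(p)j}=1+pt$ for some integer $t$. Then for $\mu=M^j\lambda$ of the form $\frac1p m+(\text{integer vector})$ one computes
\begin{equation*}
L^{\varphi(p)j}\mu=\mu+pt\mu=\mu+t\,m'+p t(\text{integer vector})
\end{equation*}
where $pt\cdot\frac1p m=t\,m'$ is integral, so $L^{\varphi(p)j}\mu\in\mu+\mathbb Z^n$, and applying $M^{-j}$ (legitimate since we are comparing cosets via the established bijection) gives the claimed containment.

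The main obstacle, and the step I would scrutinize most carefully, is keeping the coset bookkeeping straight: one must verify that $M^j\lambda$ really does have the shape $\frac1p(\text{integer vector})$ modulo $\mathbb Z^n$ with the correct nonzero residue, and that multiplying the whole coset by $L^{\varphi(p)j}$ does not merely fix $M^j\lambda$ itself but maps the entire coset $M^j\lambda+\mathbb Z^n$ into $M^j(\lambda+\mathbb Z^n)$. The cleanest route is probably to first reduce to the case $j$ via the iterated bijection of the first assertion, then invoke part (ii) with the integer matrix $A=L^{\varphi(p)j}I$, noting that $L^{\varphi(p)j}\equiv1\pmod p$ means $A\equiv I\pmod p$ — this lets part (i)'s hard containment be deduced from part (ii) applied to the scalar matrix, which I expect to be the slickest organization of the argument.
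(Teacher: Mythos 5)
Your part (ii) is correct and is essentially the paper's argument, and your first assertion of (i) (that $M$ acts as a bijection on $\mathring{E}_p^n$ modulo $\mathbb{Z}^n$) is also correct; it is in fact a cleaner route than the paper's, which avoids working over $\mathbb{F}_p$ and instead argues by contradiction using integrality of $LM^{-1}$ plus injectivity. The problem is the second assertion of (i), which you yourself identify as the heart of the lemma. The gap is that you conflate two different sets: $M^j\lambda+\mathbb{Z}^n$ (a coset of the lattice $\mathbb{Z}^n$) and $M^j(\lambda+\mathbb{Z}^n)=M^j\lambda+M^j\mathbb{Z}^n$ (a coset of the strictly smaller sublattice $M^j\mathbb{Z}^n$, since $|\det M|=|L|>1$ for an expanding matrix). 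Your proposed ``cleanest route'' --- apply part (ii) with the scalar matrix $A=L^{\varphi(p)j}I$ to the representative $\mu\in\mathring{E}_p^n$ of $M^j\lambda$ --- only yields
\begin{equation*}
L^{\varphi(p)j}\bigl(M^j\lambda+\mathbb{Z}^n\bigr)\subset \mu+\mathbb{Z}^n=M^j\lambda+\mathbb{Z}^n,
\end{equation*}
which is weaker than the claimed containment in $M^j\lambda+M^j\mathbb{Z}^n$. The same defect appears in your direct computation: from $L^{\varphi(p)j}\mu\in\mu+\mathbb{Z}^n$ with $\mu=M^j\lambda$, ``applying $M^{-j}$'' gives only $L^{\varphi(p)j}\lambda\in\lambda+M^{-j}\mathbb{Z}^n$, and $M^{-j}\mathbb{Z}^n\supsetneq\mathbb{Z}^n$; the appeal to ``the established bijection'' cannot repair this, because that bijection lives modulo $\mathbb{Z}^n$, while the target set is a coset modulo the finer lattice $M^j\mathbb{Z}^n$.

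The missing ingredient --- and the paper's key step --- is to handle the lattice part of the coset, not just the base point. Write an arbitrary element as $M^j\lambda+z$, $z\in\mathbb{Z}^n$; then
\begin{equation*}
L^{\varphi(p)j}(M^j\lambda+z)-M^j\lambda=M^j\bigl((L^{\varphi(p)j}-1)\lambda\bigr)+M^j\bigl(L^{\varphi(p)j}M^{-j}z\bigr).
\end{equation*}
The first term lies in $M^j\mathbb{Z}^n$ by Euler's theorem ($L^{\varphi(p)j}-1\in p\mathbb{Z}$ and $p\lambda\in\mathbb{Z}^n$), which is the part your argument does capture. But the second term requires that $L^{\varphi(p)j}M^{-j}$ be an \emph{integer} matrix, i.e.\ that the scalar also absorbs $\mathbb{Z}^n$ into $M^j\mathbb{Z}^n$; this holds because $L^jM^{-j}=\pm\,\mathrm{adj}(M^j)\in M_n(\mathbb{Z})$ and $\varphi(p)j\geq j$. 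This adjugate-integrality fact never appears in your proposal, and without it the claimed containment (in the form actually used later, e.g.\ when the paper factors $M^{*j''}$ out of $N^{\varphi(p)j''}({M^*}^{j''}X+\mathbb{Z}^n)$ in Proposition \ref{lem(4.3)}) does not follow.
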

\begin{proof} (i) In order to get $M\mathring{E}_p^n=\mathring{E}_p^n  \pmod {\mathbb{Z}^n}$, we first show that $M\mathring{E}_p^n\cap\mathbb{Z}^n=\emptyset$. Suppose there exist
$\xi\in \mathring{E}_p^n$  and  an integer vector $v\in \mathbb{Z}^n$ such that $M\xi=v$, then
$$
L\xi=LM^{-1}M\xi=LM^{-1}v.
$$
Since $LM^{-1}$ is  an  integer matrix, the right side of the above equation is an integer vector. However, the left side cannot be an integer vector since $\gcd(L,p)=1$. This contradiction shows that $M\mathring{E}_p^n\cap\mathbb{Z}^n=\emptyset$, which implies that $M\mathring{E}_p^n\subset\mathring{E}_p^n  \pmod {\mathbb{Z}^n}$. If $M\mathring{E}_p^n\subsetneqq\mathring{E}_p^n  \pmod {\mathbb{Z}^n}$, there must exist different $\lambda_1,\lambda_2\in\mathring{E}_p^n$ such that
$M\lambda_1=M\lambda_2 \in \mathring{E}_p^n \pmod {\mathbb{Z}^n}$. This implies that $M(\lambda_1-\lambda_2)\in \mathbb{Z}^n$. However, it contracts the fact $M\mathring{E}_p^n\cap\mathbb{Z}^n=\emptyset$ since $\lambda_1-\lambda_2\in \mathring{E}_p^n  \pmod {\mathbb{Z}^n}$. Therefore $M\mathring{E}_p^n=\mathring{E}_p^n  \pmod {\mathbb{Z}^n} $.

It follows from Lemma \ref{th(2.1)} and $\gcd(L,p)=1$ that there exist two integers $n$ and $m$ such that  $L^{\varphi(p)j}=(np+1)^j=pm+1$ for $j\geq0$. Hence for any $\lambda \in \mathring{E}_{p}^n$,  by using $L^{\varphi(p)j}M^{^{-j}}\in M_n(\mathbb{Z})$ and $pm\lambda\in\mathbb{Z}^n$, we have
\begin{align*}
L^{\varphi(p)j}(M^{j}\lambda+\Bbb Z^n)&= L^{\varphi(p)j}(M^{j}\lambda+M^{j}M^{^{-j}}\Bbb Z^n)\\
&=M^{j}((pm+1)\lambda+L^{\varphi(p)j}M^{^{-j}}\Bbb Z^n)\\
&=M^{j}(\lambda+pm\lambda+L^{\varphi(p)j}M^{^{-j}}\Bbb Z^n)\\
&\subset M^{j}(\lambda+\Bbb Z^n).
\end{align*}

(ii) Since $A=I  \pmod p$, there exists $M_0\in M_n(\mathbb{Z})$ such that $A=pM_0+I$.  Then for any $\lambda \in \mathring{E}_{p}^n$, we have
\begin{align*}
A(\lambda+\Bbb Z^n)&=(pM_0+I)(\lambda+\Bbb Z^n)\\
&=\lambda+pM_0\lambda+(pM_0+I)\Bbb Z^n\\
&\subset\lambda+\Bbb Z^n.
\end{align*}
The proof of Lemma \ref{lem(2.5)}  is completed.
\end{proof}

The following proposition  reflects the  relationships  between the two sets ${M^*}^{j}\mathcal{Z}(m_D)$ and ${\tilde{M^*}}^{j}\mathcal {Z}(m_{\tilde{D}})$ under the matrix transformation, when $(M,D)$ and $(\tilde{M}, \tilde{D})$ are conjugate in $GL_n(p)$.
\begin{pro}\label{lem(4.3)}
Suppose that $(M,D)$ and $(\tilde{M}, \tilde{D})$ are conjugate in $GL_n(p)$ (through the matrix $A,B\in GL_n(p)$). Then the following  statements  hold:

(i) If $\mathcal{Z}_{\tilde{D}}^n\subset \mathring{E}_p^n$, then for any $j\geq1$,
$${M^*}^{j}\mathcal{Z}(m_D)\subset
\frac{1}{(\det(AB))^{j+1}}
A^*({\tilde{M^*}}^{j}\mathcal {Z}(m_{\tilde{D}})+\Bbb Z^n).$$
In particular, for $j=1$, we also have
$${M^*}\mathcal {Z}(m_D)\subset\frac{1}{\det (AB)}{B^*}^{-1}\tilde{M}^*\mathcal {Z}(m_{\tilde{D}})\quad \mbox{and} \quad
{{\tilde{M}^*}}\mathcal{Z}(m_{\tilde{D}})\subset B^*{M^*}\mathcal{Z}(m_D).$$

(ii) If $\det(M)\notin p\mathbb{Z}$ and $\mathcal{Z}_{\tilde{D}}^n\subset \mathring{E}_p^n$, then
$$c_1B^* \mathcal{Z}(\hat{\mu}_{M, D})\subset \mathcal{Z}(\hat{\mu}_{\tilde{M}, \tilde{D}})
 \quad \text{and} \quad  c_2A^* \mathcal{Z}(\hat{\mu}_{\tilde{M}, \tilde{D}})\subset \mathcal{Z}(\hat{\mu}_{M, D}),$$
 where $c_1=\det(AB){|\det(\tilde{M}) |}^{(p-1)(p^n-1)}$ and $c_2={|\det({M})  |}^{(p-1)(p^n-1)}$.
\end{pro}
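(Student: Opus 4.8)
The plan is to carry out everything at the level of mask polynomials and then feed the conclusion into \eqref{1.2}--\eqref{1.3}. First I would record the dictionary produced by the conjugacy: in the case $D=B\tilde{D}$ one gets $m_D(x)=m_{\tilde{D}}(B^{*}x)$, hence $\mathcal{Z}(m_D)=(B^{*})^{-1}\mathcal{Z}(m_{\tilde{D}})$, while in the case $\tilde{D}=AD$ one gets $m_{\tilde{D}}(x)=m_D(A^{*}x)$ and $\mathcal{Z}(m_D)=A^{*}\mathcal{Z}(m_{\tilde{D}})$; I will run the case $D=B\tilde{D}$ in detail, the other being identical. Throughout I would exploit that $AB\equiv I\pmod p$ forces $A^{*}B^{*}\equiv B^{*}A^{*}\equiv I$, $\det(AB)\equiv1$, $(A^{*})^{-1}\equiv B^{*}$, $(B^{*})^{-1}\equiv A^{*}\pmod p$, and, inserting $A^{*}B^{*}\equiv I$ between the factors of $(B^{*}M^{*}A^{*})^{j}$, that $\tilde{M}^{*j}\equiv B^{*}M^{*j}A^{*}\pmod p$. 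The last ingredient is that $\mathcal{Z}_{\tilde{D}}^{n}\subset\mathring{E}_{p}^{n}$ gives $\mathcal{Z}(m_{\tilde{D}})=\mathcal{Z}_{\tilde{D}}^{n}+\mathbb{Z}^{n}\subset\tfrac1p\mathbb{Z}^{n}$, which is what will let me discard factors of $p$.

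For part (i) I would introduce $G_j:=(\det(AB))^{j+1}(A^{*})^{-1}M^{*j}(B^{*})^{-1}$ and verify two facts: using $\mathrm{adj}(A)^{*}=\det(A)(A^{*})^{-1}$ (with $\mathrm{adj}(\cdot)$ the adjugate) and $\det(AB)=\det A\,\det B$, the matrix simplifies to $G_j=(\det(AB))^{j}\,\mathrm{adj}(A)^{*}M^{*j}\mathrm{adj}(B)^{*}$, so $G_j\in M_n(\mathbb{Z})$, and reducing mod $p$ with the congruences above gives $G_j\equiv\tilde{M}^{*j}\pmod p$. Since $\mathcal{Z}(m_D)=(B^{*})^{-1}\mathcal{Z}(m_{\tilde{D}})$, every $\xi\in M^{*j}\mathcal{Z}(m_D)$ equals $(\det(AB))^{-(j+1)}A^{*}G_j z$ for some $z\in\mathcal{Z}(m_{\tilde{D}})$; writing $G_j-\tilde{M}^{*j}=pV$ with $V\in M_n(\mathbb{Z})$ and using $z\in\tfrac1p\mathbb{Z}^{n}$, the vector $(G_j-\tilde{M}^{*j})z=V(pz)$ is integral, which is exactly the claimed inclusion with the $+\mathbb{Z}^{n}$. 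The two sharp $j=1$ inclusions I would prove separately: each reduces to an invariance $C\,\mathcal{Z}(m_{\tilde{D}})\subset\mathcal{Z}(m_{\tilde{D}})$ for an integer matrix $C\equiv I\pmod p$ (namely $C=(AB)^{*}$, resp. $C=\mathrm{adj}((AB)^{*})$ after extracting $\det(AB)$), and this is precisely Lemma \ref{lem(2.5)}(ii) applied to each $\lambda\in\mathcal{Z}_{\tilde{D}}^{n}$.

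For part (ii) I would run part (i) for every $j\ge1$ and take the union \eqref{1.3}. Fixing $\xi\in M^{*j}\mathcal{Z}(m_D)$, part (i) together with $\mathcal{Z}(m_D)=(B^{*})^{-1}\mathcal{Z}(m_{\tilde{D}})$ gives, after an exact cancellation of $\det(AB)$, the identity $c_1 B^{*}\xi=F_j z$ with $z\in\mathcal{Z}(m_{\tilde{D}})$ and $F_j:=\det(A)\,|\det\tilde{M}|^{(p-1)(p^n-1)}\,B^{*}M^{*j}\mathrm{adj}(B)^{*}\in M_n(\mathbb{Z})$ satisfying $F_j\equiv\tilde{M}^{*j}\pmod p$ (here $|\det\tilde{M}|^{(p-1)(p^n-1)}\equiv1\pmod p$ by Euler's theorem, Lemma \ref{th(2.1)}, and $\mathrm{adj}(B)^{*}\equiv\det(B)A^{*}$). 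Let $e:=O_p(\tilde{M}^{*})$ (Definition \ref{de(2.9)}), which is $\le p^n-1$ by Lemma \ref{th(2.4)}, and choose the unique $K\in\{1,\dots,e\}$ with $K\equiv j\pmod e$; then $K\le j$ and $j-K$ is a multiple of $e$. The structural computation $\tilde{M}^{*-K}B^{*}M^{*K}=\det(A)^{-1}|\det\tilde{M}|^{-(K-1)}P^{(K)}$ with $P^{(K)}\in M_n(\mathbb{Z})$ shows that the denominators are a bounded power $\det(A)\,|\det\tilde{M}|^{K-1}$, so, since $K-1\le p^n-2<(p-1)(p^n-1)$ and every prime dividing $\det A,\det B,\det M$ divides $\det\tilde{M}=\det(AB)\det M$, the exponent $(p-1)(p^n-1)$ is large enough to make $\tilde{M}^{*-K}F_j\in M_n(\mathbb{Z})$, with $\tilde{M}^{*-K}F_j\equiv\tilde{M}^{*(j-K)}\pmod p$. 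As $\tilde{M}^{*(j-K)}\equiv I\pmod p$ and $z\equiv\lambda\pmod{\mathbb{Z}^{n}}$ for some $\lambda\in\mathcal{Z}_{\tilde{D}}^{n}$, Lemma \ref{lem(2.5)}(ii) yields $\tilde{M}^{*-K}(c_1 B^{*}\xi)\equiv\lambda\pmod{\mathbb{Z}^{n}}$, i.e. $c_1 B^{*}\xi\in\tilde{M}^{*K}\mathcal{Z}(m_{\tilde{D}})\subset\mathcal{Z}(\hat{\mu}_{\tilde{M},\tilde{D}})$. The reverse inclusion $c_2 A^{*}\mathcal{Z}(\hat{\mu}_{\tilde{M},\tilde{D}})\subset\mathcal{Z}(\hat{\mu}_{M,D})$ follows from the same computation with the two pairs interchanged, the constant $c_2$ now lacking the factor $\det(AB)$ because in that direction the $j=1$ step $\tilde{M}^{*}\mathcal{Z}(m_{\tilde{D}})\subset B^{*}M^{*}\mathcal{Z}(m_D)$ from part (i) carries no denominator.

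The step I expect to be the genuine obstacle is the integrality-and-periodicity bookkeeping in part (ii): showing that $\tilde{M}^{*-K}F_j$ is integral for a $K$ bounded independently of $j$. This forces one to combine three arithmetic facts --- Euler's theorem (Lemma \ref{th(2.1)}), which makes the scalar $|\det\tilde{M}|^{(p-1)(p^n-1)}\equiv1\pmod p$; the order bound $O_p(\tilde{M}^{*})\le p^n-1$ (Lemma \ref{th(2.4)}), which lets one replace the unbounded power $j$ by a representative $K\le p^n-1$ with $j-K\equiv0\pmod{e}$; and the coset invariance of Lemma \ref{lem(2.5)}, by which integer matrices congruent to $I\pmod p$ fix each coset $\lambda+\mathbb{Z}^{n}$ with $\lambda\in\mathring{E}_{p}^{n}$ --- and it is exactly this combination that pins down the exponent $(p-1)(p^n-1)$ appearing in $c_1$ and $c_2$. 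By contrast, the congruence and adjugate manipulations behind part (i) are routine once the dictionary of the first paragraph is in place.
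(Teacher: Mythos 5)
Your part (i) is correct and takes a genuinely different route from the paper's. The paper proves the main inclusion iteratively: it inserts $\det(AB){A^*}^{-1}{B^*}^{-1}=pM'+I$ one power of $M^*$ at a time, invoking Lemma \ref{lem(2.5)}(ii) at each of the $j$ steps while the denominators $\det(AB)$ accumulate (its display \eqref{lem4.3-eq2}); you do it in one shot by checking that $G_j=(\det(AB))^{j}\,\mathrm{adj}(A)^*M^{*j}\mathrm{adj}(B)^*$ is an integer matrix congruent to $\tilde{M}^{*j}\pmod p$ and that $p\,\mathcal{Z}(m_{\tilde{D}})\subset\mathbb{Z}^n$ absorbs the error term $pVz$. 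Your treatment of the two $j=1$ inclusions coincides with the paper's. For the first inclusion of part (ii), your mechanism again differs: the paper multiplies the set inclusion $\det(AB)B^*{M^*}^j\mathcal{Z}(m_{D})\subset\tilde{M^*}^{j}\mathcal{Z}(m_{\tilde{D}})+\mathbb{Z}^n$ by the scalar $\tilde{N}^{\varphi(p)(p^n-1)}$ and absorbs it via Lemma \ref{lem(2.5)}(i), whereas you verify that the integer matrix ${\tilde{M}^*}^{-K}F_j$ is congruent to $I\pmod p$ and apply Lemma \ref{lem(2.5)}(ii); both routes combine the same three ingredients (Euler's theorem, the order bound $O_p(\tilde{M^*})\le p^n-1$, coset invariance) and land on $c_1B^*{M^*}^j\mathcal{Z}(m_{D})\subset\tilde{M^*}^{K}\mathcal{Z}(m_{\tilde{D}})$. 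I checked your integrality bookkeeping ($T_K={\tilde{M}^*}^{-K}B^*M^{*K}$ having denominator dividing $\det(A)|\det(\tilde{M})|^{K-1}$, and $K\le j$); it is sound.

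The one step that does not survive scrutiny as written is the reverse inclusion $c_2A^*\mathcal{Z}(\hat{\mu}_{\tilde{M},\tilde{D}})\subset\mathcal{Z}(\hat{\mu}_{M,D})$, which you dispatch ``by the same computation with the two pairs interchanged.'' The hypothesis of (ii) is $\mathcal{Z}_{\tilde{D}}^n\subset\mathring{E}_p^n$ and nothing of the sort holds for $D$: one only has $\mathcal{Z}(m_D)={B^*}^{-1}\mathcal{Z}(m_{\tilde{D}})\subset\frac{1}{p|\det(B)|}\mathbb{Z}^n$, so after a literal interchange the vectors to which you would need to apply Lemma \ref{lem(2.5)}(ii) no longer lie in $\mathring{E}_p^n+\mathbb{Z}^n$, and the argument breaks. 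The repair stays inside your framework but is not a relabeling: write $\tilde{\xi}=\tilde{M}^{*j}z$ with $z\in\mathcal{Z}(m_{\tilde{D}})$, take $K'\equiv j\pmod{O_p(M^*)}$ with $1\le K'\le O_p(M^*)$, and check that $H:=c_2\,B^*{M^*}^{-K'}A^*\tilde{M}^{*j}$ is an integer matrix (since $c_2{M^*}^{-K'}=\pm|\det(M)|^{(p-1)(p^n-1)-K'}\mathrm{adj}(M^*)^{K'}$) with $H\equiv B^*M^{*(j-K')}A^*\equiv I\pmod p$; then Lemma \ref{lem(2.5)}(ii) applied to $z$ --- on the $\tilde{D}$ side, where the $\mathring{E}_p^n$ structure actually lives --- gives $Hz\in\mathcal{Z}(m_{\tilde{D}})$, whence $c_2A^*\tilde{\xi}=M^{*K'}{B^*}^{-1}Hz\in M^{*K'}\mathcal{Z}(m_D)\subset\mathcal{Z}(\hat{\mu}_{M,D})$. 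This asymmetry is exactly what the paper's proof handles through its inclusion \eqref{eq:2.10.0}, i.e.\ $A^*\mathcal{Z}(m_{\tilde{D}})\subset\mathcal{Z}(m_D)$: the second direction is not the mirror image of the first, it re-uses the $\tilde{D}$-side lattice structure.
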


\begin{proof} Without loss of generality, we can let $\tilde{M}=AMB$ and $D=B\tilde{D}$ (the other  situation is similar).
	
(i) Note first that
\begin{align*}
\det(AB){A^*}^{-1}{B^*}^{-1}= \mbox{adjoint matrix of\;} (AB)^*=
I\pmod p,
\end{align*}
where the last equality follows from $AB=I\pmod p$, hence there exists  some $M'\in M_n(\mathbb{Z})$ such that $\det(AB){A^*}^{-1}{B^*}^{-1}=pM'+I$.
Therefore, for any $j\geq 1$,
\begin{align}\label{eq(2.5.0)} \nonumber
{M^*}^{j}\mathcal {Z}(m_D)&={M^*}^{j}A^*{A^*}^{-1}{B^*}^{-1}B^* \mathcal {Z}(m_D)\\
&=\frac{1}{\det (AB)}{M^*}^{j}A^*(pM'+I)B^* \mathcal {Z}(m_D).
\end{align}
From $D=B\tilde{D}$, it is easy to see that
$$
m_{\tilde{D}}(x)=\frac 1{\left| \tilde{D}\right|}\sum_{\tilde{d}\in \tilde{D} } e^{2\pi i\langle\tilde{d},x\rangle}
=\frac 1{\left| D\right|}\sum_{d\in D } e^{2\pi i\langle d,{B^*}^{-1}x\rangle}=m_D({B^*}^{-1}x) \;\;\mbox{for all} \;\;x\in\Bbb R^n.
$$
This shows that
\begin{align}\label{eq(3.11.1)}
B^* \mathcal {Z}(m_D)=\mathcal {Z}(m_{\tilde{D}})\subset \mathring{E}_p^n+\Bbb Z^n.
\end{align}
Thus by \eqref{eq(2.5.0)}, \eqref{eq(3.11.1)} and Lemma \ref{lem(2.5)} (ii), an iterative calculation gives that
\begin{align}\label{lem4.3-eq2}
{M^*}^{j}\mathcal {Z}(m_D)&=\frac{1}{\det (AB)}{M^*}^{j}A^*(pM'+I)B^* \mathcal {Z}(m_D)\nonumber\\
&\subset \frac{1}{\det (AB)}{M^*}^{j-1}M^*A^*\mathcal {Z}(m_{\tilde{D}})\nonumber\\ \nonumber
&=\frac{1}{\det (AB)}{M^*}^{j-1}A^*{A^*}^{-1}{B^*}^{-1}B^*M^*A^*\mathcal {Z}(m_{\tilde{D}})\\ \nonumber
&=\frac{1}{(\det(AB))^{2}}{M^*}^{j-1}A^*(pM'+I)\tilde{M}^*\mathcal {Z}(m_{\tilde{D}})\\
&\subset \frac{1}{(\det(AB))^{2}}{M^*}^{j-1}A^*\left(\tilde{M}^*\mathcal {Z}(m_{\tilde{D}})+\Bbb Z^n\right) \nonumber\\
&\subset \cdots\nonumber\\
&\subset \frac{1}{(\det(AB))^{j+1}}A^*\left({\tilde{M^*}}^{j}\mathcal {Z}(m_{\tilde{D}})+\Bbb Z^n\right).
\end{align}

For $j=1$, by \eqref{eq(2.5.0)} and Lemma \ref{lem(2.5)} (ii), a direct calculation also gives that
\begin{align*}
{M^*}\mathcal {Z}(m_D)&=\frac{1}{\det (AB)}{M^*}A^*(pM'+I)B^* \mathcal {Z}(m_D)\nonumber\\
&\subset \frac{1}{\det (AB)}M^*A^*\mathcal {Z}(m_{\tilde{D}})\nonumber\\ \nonumber
&=\frac{1}{\det (AB)}{B^*}^{-1}B^*M^*A^*\mathcal {Z}(m_{\tilde{D}})\\ \nonumber
&=\frac{1}{\det (AB)}{B^*}^{-1}\tilde{M}^*\mathcal {Z}(m_{\tilde{D}}).
\end{align*}
On the other hand, note that $B^*A^*\mathcal{Z}(m_{\tilde{D}})\subset \mathcal{Z}(m_{\tilde{D}})=B^*\mathcal{Z}(m_D)$, so that
\begin{align}\label{eq:2.10.0}
\mathcal{Z}(m_{\tilde{D}})\subset {A^*}^{-1}\mathcal{Z}(m_D).
\end{align}
Hence, ${{\tilde{M}^*}}\mathcal{Z}(m_{\tilde{D}})\subset  B^*M^*A^*{A^*}^{-1}\mathcal{Z}(m_{D})= B^* M^*\mathcal{Z}(m_D).$

(ii)
It follows from $AB=I\pmod p$ that $A^*B^*=pM_0+I$ for some $M_0\in M_n(\mathbb{Z})$.
Hence for any $j\geq 1$,  there exists a matrix $M_{j}\in M_n(\mathbb{Z})$ such that
\begin{align}\label{eq(2.10)} \nonumber
A^*\tilde{M^*}^jB^*=A^*{(B^*M^*A^*)}^{j}B^*&=A^*B^*M^*A^*B^*M^*\cdots A^*B^*M^*A^*B^*\\ \nonumber
&=(pM_0+I)M^*(pM_0+I)M^*\cdots(pM_0+I)M^*(pM_0+I)\\
&={M^*}^j+pM_{j}.
\end{align}
Thus
\begin{align*}
\det(AB)B^*{M^*}^j\mathcal{Z}(m_{D})&=
\det(AB)B^*(A^*\tilde{M^*}^jB^*-pM_{j})\mathcal{Z}(m_{D})\\
&\subset\det(AB)B^*A^*\tilde{M^*}^jB^*\mathcal{Z}(m_{D})-\det(AB) pB^*M_{j}{B^*}^{-1}B^*\mathcal{Z}(m_{D})\\
&\subset \det(AB)B^*A^*\tilde{M^*}^j \mathcal{Z}(m_{\tilde{D}})+\Bbb Z^n.
\end{align*}
where the last inclusion relation follows from \eqref{eq(3.11.1)} and the fact $\det(AB){B^*}^{-1}\in M_n(\mathbb{Z})$. Then using $AB=I \pmod p$ again,  we have
\begin{align}\label{eq(2.11)}
\det(AB)B^*{M^*}^j\mathcal{Z}(m_{D})\subset \tilde{M^*}^j \mathcal{Z}(m_{\tilde{D}})+\Bbb Z^n.
\end{align}

Let $N:=|\det(M)|$ and $\tilde{N}:=|\det (\tilde{M})|$. Note that $\tilde{M}=AMB$ for some $A,B\in GL_n(p)$. Then $N, \tilde{N}\notin p\mathbb{Z}$ and $\tilde{M^*}\in GL_n(p)$. By using Lemma \ref{th(2.4)}, we have $\tau:=O_p(\tilde{M^*})\leq p^n-1$, where $O_p(\tilde{M^*})$ is the order of
$\tilde{M^*}$. It is clear that for any $j\geq 1$, there exist some integers $m$ and $1\leq j'\leq \tau$ such that $j=m \tau+j'$, hence the definition of the order of matrix implies that
\begin{align}\label{eq(2.12)}
\tilde{M^*}^j\mathcal{Z}(m_{\tilde{D}})+\mathbb{Z}^n= \tilde{M^*}^{j'}\mathcal{Z}(m_{\tilde{D}})+\mathbb{Z}^n.
\end{align}
In addition, it follows from $\gcd(\tilde{N},p)=1$ and Lemma \ref{th(2.1)} that $\tilde{N}^{\varphi(p)}=1\pmod p$, then there  exists an integer $\iota$ such that
$\tilde{N}^{\varphi(p)(p^n-1-j')}=\iota p+1$.  Now using this and \eqref{eq(2.12)}, multiplying both sides of \eqref{eq(2.11)} by $\tilde{N}^{\varphi(p)(p^n-1)}$, we obtain
\begin{align*}
\tilde{N}^{\varphi(p)(p^n-1)}\det(AB)B^*{M^*}^j\mathcal{Z}(m_{D})
&\subset \tilde{N}^{\varphi(p)(p^n-1)}(\tilde{M^*}^j \mathcal{Z}(m_{\tilde{D}})+\Bbb Z^n)\\ \nonumber
&= \tilde{N}^{\varphi(p)j'}(\iota p+1)(\tilde{M^*}^{j'} \mathcal{Z}(m_{\tilde{D}})+\Bbb Z^n)\\ \nonumber
&\subset \tilde{N}^{\varphi(p)j'}(\tilde{M^*}^{j'} \mathcal{Z}(m_{\tilde{D}})+\Bbb Z^n).
\end{align*}
Note that $c_1=\tilde{N}^{(p-1)(p^n-1)}\det(AB)$. Thus by Lemma \ref{lem(2.5)}(i) and the fact $\varphi(p)=p-1$ for prime $p$, we have
\begin{align*}
c_1B^*{M^*}^j\mathcal{Z}(m_{D}) \subset\tilde{M^*}^{j'}(\mathcal{Z}(m_{\tilde{D}})+\Bbb Z^n)=\tilde{M^*}^{j'}\mathcal{Z}(m_{\tilde{D}}).
\end{align*}
Therefore, \eqref{1.3} implies that
\begin{align}\label{eq(2.13.1)}
c_1B^* \mathcal{Z}(\hat{\mu}_{M, D})&=\bigcup_{j=1}^\infty c_1B^*{M^*}^j\mathcal{Z}(m_{D}) \subset \bigcup_{j'=1}^\tau\tilde{M^*}^{j'}\mathcal{Z}(m_{\tilde{D}})\subset \mathcal{Z}(\hat{\mu}_{\tilde{M}, \tilde{D}}).
\end{align}

Now we will prove $c_2A^* \mathcal{Z}(\hat{\mu}_{\tilde{M}, \tilde{D}})\subset \mathcal{Z}(\hat{\mu}_{M, D})$, where $c_2=N^{(p-1)(p^n-1)}$.  Note first that $B^*A^*=I \pmod p$ and $\mathcal{Z}(m_{\tilde{D}})\subset \mathring{E}_p^n+\Bbb Z^n$. Applying Lemma \ref{lem(2.5)}(ii), we have
\begin{align}\label{eq(2.15.0)}
\mathcal{Z}(m_{\tilde{D}})=B^*A^*\mathcal{Z}(m_{\tilde{D}})+\Bbb Z^n.
\end{align}
Then it follows from \eqref{eq(2.15.0)} and \eqref{eq(2.10)} that, for any $j\geq 1$,
\begin{align}\label{eq:2.15}
A^*\tilde{M^*}^{j}\mathcal{Z}(m_{\tilde{D}})&=A^*\tilde{M^*}^{j}\left(B^*A^*\mathcal{Z}(m_{\tilde{D}})+\Bbb Z^n\right)\nonumber\\
&=({M^*}^j+pM_{j})A^*\mathcal{Z}(m_{\tilde{D}})+A^*\tilde{M^*}^{j}\Bbb Z^n\nonumber\\
&\subset {M^*}^jA^*\mathcal{Z}(m_{\tilde{D}})+\Bbb Z^n.
\end{align}
And again, just like in the previous case, there exists an integer $\kappa$  such that $N^{\varphi(p)(p^n-1-j'')}=p\kappa+1$, where $j''=j\pmod {O_p(M^*)}$ satisfies $1\leq j''\leq O_p(M^*)$. Multiplying both sides of \eqref{eq:2.15} by $c_2=N^{\varphi(p)(p^n-1)}$, and using the fact $A^*\mathcal{Z}(m_{\tilde{D}})\subset\mathring{E}_p^n+\Bbb Z^n$ and Lemma \ref{lem(2.5)}(i), we obtain that
\begin{align*}
c_2A^*\tilde{M^*}^{j}\mathcal{Z}(m_{\tilde{D}})&\subset N^{\varphi(p)j''}(p\kappa+1)\left({M^*}^jA^*\mathcal{Z}(m_{\tilde{D}})+\Bbb Z^n\right)\\
&\subset N^{\varphi(p)j''}\left({M^*}^jA^*\mathcal{Z}(m_{\tilde{D}})+\Bbb Z^n\right)\\
&=N^{\varphi(p)j''}\left({M^*}^{j''}A^*\mathcal{Z}(m_{\tilde{D}})+\Bbb Z^n\right)\\
&\subset {M^*}^{j''}\left(A^*\mathcal{Z}(m_{\tilde{D}})+\Bbb Z^n\right)\\
&\subset{M^*}^{j''}\mathcal{Z}(m_{D}),
\end{align*}
where the last inclusion relation follows from  \eqref{eq:2.10.0}. Similar to \eqref{eq(2.13.1)},
we have $c_2A^* \mathcal{Z}(\hat{\mu}_{\tilde{M}, \tilde{D}})\subset \mathcal{Z}(\hat{\mu}_{M, D})$.

This completes the proof of Proposition \ref{lem(4.3)}.
\end{proof}

Similar to Proposition \ref{lem(4.3)}, by symmetry, we have
\begin{re}\label{lem(4.4)}
Suppose that $(M,D)$ and $(\tilde{M}, \tilde{D})$ are conjugate in $GL_n(p)$ (through the matrix $A,B\in GL_n(p)$). Then the following  statements  hold:

(i) If $\mathcal{Z}_{D}^n\subset \mathring{E}_p^n$, then for any $j\geq1$,
$${{\tilde{M}}^j}\mathcal{Z}(m_{\tilde{D}})
\subset \frac{1}{(\det(AB))^{j+1}}B^*({M^*}^j\mathcal{Z}(m_D)+\Bbb Z^n).$$
In particular, for $j=1$, we also have
$${M^*}\mathcal{Z}(m_D)\subset A^*{{\tilde{M}}^{*}}\mathcal{Z}(m_{\tilde{D}})
\quad\mbox{and}\quad{{\tilde{M}}^{*}}\mathcal{Z}(m_{\tilde{D}})\subset \frac{1}{\det (AB)}{A^*}^{-1}{M^*}\mathcal{Z}(m_D).$$

(ii) If $\det (M)\notin p\mathbb{Z}$ and $\mathcal{Z}_{D}^n\subset \mathring{E}_p^n$, then
$$d_1B^*\mathcal{Z}(\hat{\mu}_{M, D})\subset \mathcal{Z}(\hat{\mu}_{\tilde{M}, \tilde{D}}) \quad \text{and} \quad d_2A^*\mathcal{Z}(\hat{\mu}_{\tilde{M}, \tilde{D}})\subset \mathcal{Z}(\hat{\mu}_{M, D}),$$
where $d_1={|\det (\tilde{M}) |}^{(p-1)(p^n-1)}$ and $d_2=\det(AB){|\det (M) |}^{(p-1)(p^n-1)}$.
\end{re}

\section{\bf The spectrality under the similarity transformation  of $GL_n(p)$ \label{sect.3}}

In this section, we mainly consider the spectrality of self-affine measure after a similarity transformation  of $GL_n(p)$.
We will show that, for some matrices $M$ and digit sets $D$, the admissible property  of  $(M,D)$ and the cardinality of orthogonal exponentials of $L^2(\mu_{M,D})$ are invariant under a  similarity transformation  of $GL_n(p)$.

\begin{thm}\label{th(3.1)}
Suppose that $(M,D)$ and $(\tilde{M}, \tilde{D})$ are conjugate in $GL_n(p)$. If
$\mathcal{Z}_{\tilde{D}}^n\subset \mathring{E}_p^n$ or $\mathcal{Z}_{D}^n\subset \mathring{E}_p^n$, then  $(M,D)$ is admissible if  and  only if  $(\tilde{M},\tilde{D})$ is admissible.
\end{thm}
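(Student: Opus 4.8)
The plan is to reduce admissibility to a condition on the mask polynomial and then to transport a Hadamard set from one pair to the other by an \emph{explicit integral} map, using the $p$-torsion hypothesis to keep everything inside the zero set. First I would record the restatement: $(M,D)$ is admissible if and only if there is a set $S\subset\mathbb{Z}^n$ with $|S|=|D|$ such that $M^{*-1}(s-s')\in\mathcal{Z}(m_D)$ for all distinct $s,s'\in S$. This follows from $\langle M^{-1}d,s-s'\rangle=\langle d,M^{*-1}(s-s')\rangle$, which shows that the Hadamard matrix $H$ is unitary precisely when every column difference annihilates $m_D$. As in the proof of Proposition \ref{lem(4.3)} I take, without loss of generality, $\tilde{M}=AMB$ and $D=B\tilde{D}$, so that by \eqref{eq(3.11.1)} we have $\mathcal{Z}(m_{\tilde{D}})=B^*\mathcal{Z}(m_D)$; under the hypothesis $\mathcal{Z}_{\tilde{D}}^n\subset\mathring{E}_p^n$ the whole set $\mathcal{Z}(m_{\tilde{D}})$ then lies in $\mathring{E}_p^n+\mathbb{Z}^n$, i.e. consists of $p$-torsion cosets.

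The heart of the argument is the choice of the transporting map. Set $k=\det(AB)^{p-1}$. Since $A,B\in GL_n(p)$ we have $\gcd(\det(AB),p)=1$, so Euler's theorem (Lemma \ref{th(2.1)}) gives $k\equiv 1\pmod p$, while $k(AB)^{*-1}=\det(AB)^{p-2}\,\mathrm{adj}((AB)^*)$ is an \emph{integer} matrix; reducing $(AB)^*\cdot k(AB)^{*-1}=kI$ modulo $p$ and using $(AB)^*\equiv I\pmod p$ shows $k(AB)^{*-1}\equiv I\pmod p$ as well. Given a Hadamard triple $(M,D,S)$, I would then define $\tilde{S}=\{\,kB^*(s-s_0):s\in S\,\}$ for a fixed $s_0\in S$. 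This gives $\tilde{S}\subset\mathbb{Z}^n$, $|\tilde{S}|=|S|=|\tilde{D}|$, and for distinct members
\[
\tilde{M}^{*-1}(\tilde{s}_i-\tilde{s}_j)=k\,A^{*-1}M^{*-1}(s_i-s_j)=[\,k(AB)^{*-1}\,](B^*\zeta_{ij}),
\]
where $\zeta_{ij}=M^{*-1}(s_i-s_j)\in\mathcal{Z}(m_D)$ and $B^*\zeta_{ij}\in\mathcal{Z}(m_{\tilde{D}})\subset\mathring{E}_p^n+\mathbb{Z}^n$. Since $k(AB)^{*-1}$ is integral and $\equiv I\pmod p$, Lemma \ref{lem(2.5)}(ii) keeps this vector in $\mathcal{Z}(m_{\tilde{D}})$, so $(\tilde{M},\tilde{D},\tilde{S})$ is Hadamard and $(\tilde{M},\tilde{D})$ is admissible.

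For the converse I would run the mirror construction: from a Hadamard triple $(\tilde{M},\tilde{D},\tilde{S})$ put $k'=\det(B)^{p-1}$ (again $\equiv 1\pmod p$, with $k'B^{*-1}=\det(B)^{p-2}\,\mathrm{adj}(B^*)$ integral) and $S=\{\,k'B^{*-1}(\tilde{s}-\tilde{s}_0):\tilde{s}\in\tilde{S}\,\}$. Using $B^*M^{*-1}=(AB)^*\tilde{M}^{*-1}B^*$, the relevant difference becomes $B^*M^{*-1}(s_i-s_j)=(AB)^*\big(k'\tilde{M}^{*-1}(\tilde{s}_i-\tilde{s}_j)\big)$: the scalar $k'\equiv1\pmod p$ fixes the $p$-torsion coset $\tilde{M}^{*-1}(\tilde{s}_i-\tilde{s}_j)\in\mathcal{Z}(m_{\tilde{D}})$, and then $(AB)^*\equiv I\pmod p$ with Lemma \ref{lem(2.5)}(ii) keeps the result in $\mathcal{Z}(m_{\tilde{D}})=B^*\mathcal{Z}(m_D)$, whence $M^{*-1}(s_i-s_j)\in\mathcal{Z}(m_D)$. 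The case $\mathcal{Z}_D^n\subset\mathring{E}_p^n$, and the alternative conjugacy $\tilde{D}=AD$, are treated symmetrically (this is the symmetry recorded in Remark \ref{lem(4.4)}), with $A$ playing the role of $B$.

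I expect the main obstacle to be precisely the \emph{integrality} of the transporting map. The naive candidate $\tilde{S}=B^*S$ fails, because $A^{*-1}\zeta=(AB)^{*-1}(B^*\zeta)$ generally escapes $\mathcal{Z}(m_{\tilde{D}})$: the non-integral conjugators $(AB)^{*-1},B^{*-1}$ introduce denominators coprime to $p$ (already for $n=1$, $p=2$, $A=3$, $B=1$ one has $(AB)^{*-1}\cdot\tfrac12=\tfrac16\notin\tfrac12+\mathbb{Z}$). The entire role of the Euler power $\det(\cdot)^{p-1}$ is to resolve this: it simultaneously clears those denominators (restoring integrality) and remains $\equiv I\pmod p$, so that the $p$-torsion hypothesis together with Lemma \ref{lem(2.5)}(ii) preserves membership in the zero set. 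Checking that this one device drives both directions — and confirming that the $p$-torsion assumption is needed only on the \emph{target} digit set in each direction — is the delicate point of the write-up.
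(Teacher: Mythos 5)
Your proposal is correct and follows essentially the same route as the paper's proof: both reduce admissibility to the condition $(S-S)\setminus\{0\}\subset M^*\mathcal{Z}(m_D)$ and transport the set $S$ by an integral scalar multiple of $B^*$ (forward) and of ${B^*}^{-1}$ (backward), where the scalar is chosen $\equiv 1 \pmod p$ so that Lemma \ref{lem(2.5)}(ii) together with $\mathcal{Z}_{\tilde{D}}^n\subset\mathring{E}_p^n$ keeps all differences inside the zero set. The only cosmetic differences are that the paper takes the scalars $\det(AB)$ and $|\det(B)|^{\varphi(p)}$ and cites Proposition \ref{lem(4.3)}(i), whereas you take $\det(AB)^{p-1}$ and $\det(B)^{p-1}$ and inline the corresponding computation.
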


\begin{proof} Without loss of generality, we let   $\tilde{M}:=AMB$ and $D=B\tilde{D}$ with $\mathcal{Z}_{\tilde{D}}^n\subset \mathring{E}_p^n$ (Other situations are similar).

Necessity:  \ \  Suppose $(M,D)$ is admissible, then there exists $S\subset \mathbb{Z}^n$ with $|S|=|D|$  such that $(M^{-1}D,S)$ is  a compatible pair, i.e., the matrix
\begin{align} \nonumber
H=\frac{1}{\sqrt{|S|}}\left[e^{2\pi i <M^{-1}d,s>}\right]_{d\in D,s\in S}
=\frac{1}{\sqrt{|S|}}\left[e^{2\pi i <d,{M^*}^{-1}s>}\right]_{d\in D,s\in S} \quad\mbox{is unitary.}
\end{align}
This is equivalent to $({M^*}^{-1}S-{M^*}^{-1}S)\setminus\{0\}\subset \mathcal {Z}(m_{D})$, i.e., $(S-S)\setminus\{0\}\subset M^*\mathcal {Z}(m_{D}).$
It follows from Proposition \ref{lem(4.3)}(i) that
$(S-S)\setminus\{0\}\subset M^*\mathcal {Z}(m_{D}) \subset\frac{1}{\det(AB)}{B^*}^{-1}\tilde{M}^*\mathcal{Z}(m_{\tilde{D}}).$
This implies that
$$\det(AB){B^*}(S-S)\setminus\{0\} \subset\tilde{M}^*\mathcal{Z}(m_{\tilde{D}}),$$
Hence, it is easy to see  that  $\tilde{S}:=\det(AB){B^*}S\subset \Bbb Z^n$ and $(\tilde{M}^{-1}\tilde{D},\tilde{S})$ is a compatible pair, i.e.,  $(\tilde{M},\tilde{D})$ is admissible.

Sufficiency:  \ \  If $(\tilde{M},\tilde{D})$ is admissible, then there exists  $\tilde{S} \subset \mathbb{Z}^n$  with  $|\tilde{S}|=|\tilde{D}|$ such that $(\tilde{M}^{-1}\tilde{D},\tilde{S})$ is a compatible pair.  Similar to the previous process, we have
\begin{align}\label{eq(3.7.2)}
(\tilde{S}-\tilde{S})\setminus\{0\}\subset \tilde{M^*}\mathcal {Z}(m_{\tilde{D}}).
\end{align}
Since $\gcd(\det(B),p)=1$, Lemma \ref{th(2.1)} gives
$|\det(B)|^{\varphi(p)}=1  \pmod p$,
where $\varphi$ is Euler's phi function.
Hence, it follows from Lemma \ref{lem(2.5)}(ii) that
\begin{align*}
|\det(B)|^{\varphi(p)}(\tilde{S}-\tilde{S})\setminus\{0\}\subset |\det(B)|^{\varphi(p)}\tilde{M^*}\mathcal {Z}(m_{\tilde{D}})\subset \tilde{M^*}\mathcal {Z}(m_{\tilde{D}}),
\end{align*}
so Proposition \ref{lem(4.3)}(i) implies
$|\det(B)|^{\varphi(p)}(\tilde{S}-\tilde{S})\setminus\{0\}\subset B^*{M^*}\mathcal{Z}(m_D)$, i.e.,
$$
|\det(B)|^{\varphi(p)}{B^*}^{-1}(\tilde{S}-\tilde{S})\setminus\{0\}\subset {M^*}\mathcal{Z}(m_D).
$$
Obviously,  $S:=|\det(B)|^{\varphi(p)}{B^*}^{-1}\tilde{S}\subset\Bbb Z^n$  and $(M^{-1}D,S)$ is  a compatible pair  since $|\det(B)|^{\varphi(p)}{B^*}^{-1}$ is an integer matrix. Therefore, $(M,D)$ is admissible.

Hence we complete the proof of Theorem \ref{th(3.1)}.
\end{proof}

\begin{thm}\label{th(3.2)}
Suppose that $(M,D)$ and $(\tilde{M}, \tilde{D})$ are conjugate in $GL_n(p)$. If
$\mathcal{Z}_{\tilde{D}}^n\subset \mathring{E}_p^n$ or $\mathcal{Z}_{D}^n\subset \mathring{E}_p^n$, then $L^2(\mu_{M,D})$  has  infinite many orthogonal exponential functions if and only if $L^2(\mu_{\tilde{M},\tilde{D}})$  has  infinite many orthogonal exponential functions. Moreover, if $\det (M)\notin p\mathbb{Z}$, then $n^*(\mu_{M,D})=n^*(\mu_{\tilde{M},\tilde{D}})\leq p^n$.
\end{thm}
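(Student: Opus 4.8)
The plan is to split the statement into its two assertions: the infinite part via Li's criterion (Lemma \ref{th(2.5.1)}), and the cardinality part via the inclusions of Proposition \ref{lem(4.3)}(ii). Throughout I would adopt the normalization of the proof of Proposition \ref{lem(4.3)}, namely $\tilde M=AMB$, $D=B\tilde D$ with $\mathcal Z_{\tilde D}^n\subset\mathring E_p^n$, the remaining case being symmetric through Remark \ref{lem(4.4)}. First I would note that both $\mathcal Z_D^n$ and $\mathcal Z_{\tilde D}^n$ are finite rational sets: $\mathcal Z_{\tilde D}^n\subset\mathring E_p^n$ by hypothesis, while $B^*\mathcal Z(m_D)=\mathcal Z(m_{\tilde D})$ from \eqref{eq(3.11.1)} forces $\mathcal Z(m_D)={B^*}^{-1}\mathcal Z(m_{\tilde D})$ to be discrete and rational, so $\mathcal Z_D^n$ is finite. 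Hence Lemma \ref{th(2.5.1)} applies to both measures, and using the periodicity \eqref{eq(1.6.0)} together with $M^*\mathbb Z^n\subset\mathbb Z^n$, the existence of an infinite orthogonal set for $\mu_{M,D}$ is equivalent to ${M^*}^{j}\mathcal Z(m_D)\cap\mathbb Z^n\neq\emptyset$ for some $j\ge1$, and similarly for $\mu_{\tilde M,\tilde D}$ with $\tilde M^{*j}\mathcal Z(m_{\tilde D})$.

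The heart of the infinite part is to transfer these integrality conditions, and the crucial observation is that the two coset inclusions
\[
\det(AB)\,B^*{M^*}^{j}\mathcal Z(m_D)\subset \tilde M^{*j}\mathcal Z(m_{\tilde D})+\mathbb Z^n,\qquad A^*\tilde M^{*j}\mathcal Z(m_{\tilde D})\subset {M^*}^{j}\mathcal Z(m_D)+\mathbb Z^n,
\]
which appear in the course of proving Proposition \ref{lem(4.3)} (they are exactly \eqref{eq(2.11)} and the combination of \eqref{eq:2.15} with \eqref{eq:2.10.0}), rest only on the identity $A^*\tilde M^{*j}B^*={M^*}^{j}+pM_j$ and on $\mathcal Z_{\tilde D}^n\subset\mathring E_p^n$, and in particular do \emph{not} require $\det(M)\notin p\mathbb Z$. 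Given ${M^*}^{j_0}z\in\mathbb Z^n$ with $z\in\mathcal Z(m_D)$, applying the integer-preserving map $\det(AB)B^*$ and the first inclusion produces $\tilde z\in\mathcal Z(m_{\tilde D})$ with $\tilde M^{*j_0}\tilde z\in\mathbb Z^n$; conversely, applying $A^*$ and the second inclusion to a point $\tilde M^{*j_0}\tilde z\in\mathbb Z^n$ yields $z\in\mathcal Z(m_D)$ with ${M^*}^{j_0}z\in\mathbb Z^n$. This gives the equivalence of the two integrality conditions, hence of infinitude.

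For the cardinality statement I would assume $\det(M)\notin p\mathbb Z$; since $A,B\in GL_n(p)$ one has $\det(\tilde M)=\det(A)\det(M)\det(B)\notin p\mathbb Z$, so Proposition \ref{lem(4.3)}(ii) is available. Read through the orthogonality criterion \eqref{1.4}, the containments $c_1B^*\mathcal Z(\hat\mu_{M,D})\subset\mathcal Z(\hat\mu_{\tilde M,\tilde D})$ and $c_2A^*\mathcal Z(\hat\mu_{\tilde M,\tilde D})\subset\mathcal Z(\hat\mu_{M,D})$ send any orthogonal set $\Lambda$ of $\mu_{M,D}$ to the orthogonal set $c_1B^*\Lambda$ of $\mu_{\tilde M,\tilde D}$ of the same cardinality, and conversely via $c_2A^*$; as these maps are injective, $n^*(\mu_{M,D})=n^*(\mu_{\tilde M,\tilde D})$. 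To obtain the bound $\le p^n$ I would show $\mathcal Z(\hat\mu_{\tilde M,\tilde D})\subset\mathring E_p^n+\mathbb Z^n=\tfrac1p\mathbb Z^n\setminus\mathbb Z^n$, which follows from $\mathcal Z(m_{\tilde D})\subset\mathring E_p^n+\mathbb Z^n$ and $\tilde M^{*j}\mathring E_p^n=\mathring E_p^n\pmod{\mathbb Z^n}$ (Lemma \ref{lem(2.5)}(i), valid since $\gcd(\det\tilde M,p)=1$). Then distinct points of an orthogonal set $\tilde\Lambda$ differ by an element of $\tfrac1p\mathbb Z^n\setminus\mathbb Z^n$, so reduction modulo $\mathbb Z^n$ embeds $\tilde\Lambda$ injectively into $(\tfrac1p\mathbb Z^n)/\mathbb Z^n\cong(\mathbb Z/p\mathbb Z)^n$, giving $|\tilde\Lambda|\le p^n$ and hence the claim.

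I expect the main obstacle to be the reverse direction of the infinite equivalence under the one-sided hypothesis $\mathcal Z_{\tilde D}^n\subset\mathring E_p^n$. A naive substitution $z={B^*}^{-1}\tilde z$ into ${M^*}^{j_0}=A^*\tilde M^{*j_0}B^*-pM_{j_0}$ leaves an error term $pM_{j_0}z$ that need not be integral, precisely because $\mathcal Z_D^n$ is not assumed to lie in $\tfrac1p\mathbb Z^n$. The remedy is to route the argument through the two coset inclusions above rather than through Proposition \ref{lem(4.3)}(i) directly: there the factor $p$ multiplies elements of $\mathcal Z(m_{\tilde D})\subset\tfrac1p\mathbb Z^n$ (not of $\mathcal Z(m_D)$), so the error is absorbed into $\mathbb Z^n$ and integrality is preserved.
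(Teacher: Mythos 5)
Your proposal is correct and follows essentially the same route as the paper's proof: both reduce the infinitude statement to Li's criterion (Lemma \ref{th(2.5.1)}) and transfer the integrality condition through the identity $A^*\tilde{M}^{*j}B^*={M^*}^{j}+pM_j$ (the paper via Proposition \ref{lem(4.3)}(i) for necessity and a direct computation for sufficiency, you via the intermediate inclusions \eqref{eq(2.11)} and \eqref{eq:2.15} with \eqref{eq:2.10.0}, which as you note do not require $\det(M)\notin p\mathbb{Z}$), and both obtain $n^*(\mu_{M,D})=n^*(\mu_{\tilde{M},\tilde{D}})$ from Proposition \ref{lem(4.3)}(ii) and the bound $p^n$ from Lemma \ref{lem(2.5)}(i) together with a pigeonhole argument modulo $\mathbb{Z}^n$. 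The only differences are cosmetic (the paper counts the $p^n-1$ residue classes of $\mathring{E}_p^n$ among the nonzero points of $\tilde{\Lambda}$, whereas you embed all of $\tilde{\Lambda}$ into $(\tfrac{1}{p}\mathbb{Z}^n)/\mathbb{Z}^n$), so no further comparison is needed.
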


\begin{proof} Without loss of generality, we let   $\tilde{M}:=AMB$ and $D=B\tilde{D}$ with $\mathcal{Z}_{\tilde{D}}^n\subset \mathring{E}_p^n$ (Other situations are similar).
It follows from  \eqref{eq(1.6.0)}  and \eqref{eq(3.11.1)} that
$$
\mathcal{Z}_{D}^n+\Bbb Z^n=\mathcal{Z}(m_D)=B^{*-1}\mathcal{Z}(m_{\tilde{D}})=B^{*-1}(\mathcal{Z}_{\tilde{D}}^n+\Bbb Z^n)\subset B^{*-1}(\mathring{E}_p^n+\Bbb Z^n)\subset \mathring{E}_{p|\det(B)|}^n+\Bbb Z^n.
$$
So $\mathcal{Z}_{D}^n\subset\mathring{E}_{p|\det(B)|}^n\subset \Bbb Q^n$, and it is a finite set. Then by Lemma  \ref{th(2.5.1)}, in order to conclude that $L^2(\mu_{M,D})$  has  infinite many orthogonal exponential functions if and only if $L^2(\mu_{\tilde{M},\tilde{D}})$  has  infinite many orthogonal exponential functions, it is sufficient to prove that
$$\exists\;\; j\in \Bbb N\;\; s.t. \;\;(M^{*j}\mathcal{Z}_{D}^n)\cap \mathbb{Z}^n\neq\emptyset\Longleftrightarrow \exists\;\; \tilde{j}\in \Bbb N\;\; s.t. \;\;(\tilde{M}^{*\tilde{j}}\mathcal{Z}_{\tilde{D}}^n)\cap \mathbb{Z}^n\neq\emptyset.$$

Necessity: Suppose there exists an integer $j\in \Bbb N$ such that
$(M^{*j}\mathcal{Z}_{D}^n)\cap \mathbb{Z}^n\neq\emptyset$. By \eqref{eq(1.6.0)} and Proposition \ref{lem(4.3)}(i), we have
\begin{align*}
{M^*}^{j}\mathcal{Z}_D^n\subset{M^*}^{j}\mathcal{Z}(m_D)\subset \frac{1}{(\det(AB))^{j+1}}A^*({\tilde{M^*}}^{j}\mathcal {Z}(m_{\tilde{D}})+\Bbb Z^n).
\end{align*}
Thus $\frac{1}{(\det(AB))^{j+1}}A^*({\tilde{M^*}}^{j}\mathcal {Z}(m_{\tilde{D}})+\Bbb Z^n)\cap \Bbb Z^n\neq \emptyset$, which is equivalent to
$$E:=({\tilde{M^*}}^{j}\mathcal {Z}(m_{\tilde{D}})+\Bbb Z^n)\cap (\det(AB))^{j+1}A^{*-1}\Bbb Z^n\neq \emptyset.
$$
Combining this with $\det(A)A^{*-1}\in M_n(\Bbb Z)$, and using \eqref{eq(1.6.0)} again, we have
$$
({\tilde{M^*}}^{j}\mathcal{Z}_{\tilde{D}}^n+\mathbb{Z}^n)\cap\Bbb Z^n=({\tilde{M^*}}^{j}\mathcal {Z}(m_{\tilde{D}})+\Bbb Z^n)\cap \Bbb Z^n \supset E\neq\emptyset.
$$
Therefore, $({\tilde{M^*}}^{j}\mathcal{Z}_{\tilde{D}}^n)\cap \mathbb{Z}^n\neq\emptyset.$

Sufficiency:  Suppose there exist $\tilde{j}\in\mathbb{N}$ and $\tilde{\lambda}\in \mathcal{Z}_{\tilde{D}}^n$  such that
${\tilde{M^*}}^{\tilde{j}}\tilde{\lambda}\in \mathbb{Z}^n$. Since $AB=I \pmod p $ and $\mathcal{Z}_{\tilde{D}}^n\subset \mathring{E}_p^n$, it follows that $\det(AB)\tilde{\lambda}\in\mathcal{Z}_{\tilde{D}}^n +\Bbb Z^n=\mathcal{Z}(m_{\tilde{D}})$.
Let $\lambda:=B^{*-1}\det(AB)\tilde{\lambda}$, then  $\lambda \in B^{*-1}\mathcal{Z}(m_{\tilde{D}})= \mathcal{Z}(m_D)$  by \eqref{eq(3.11.1)}. It follows from \eqref{eq(2.10)} that
$$
{M^*}^{\tilde{j}}\lambda=
(A^*\tilde{M^*}^{\tilde{j}}B^*-pM_{\tilde{j}})B^{*-1}\det(AB)\tilde{\lambda}=
\det(AB)A^*(\tilde{M^*}^{\tilde{j}}\tilde{\lambda})-pM_{\tilde{j}}B^{*-1}\det(AB)\tilde{\lambda}
$$
for some $M_{\tilde{j}}\in M_n(\Bbb Z)$. Note that $\tilde{M^*}^{\tilde{j}}\tilde{\lambda},\;p\tilde{\lambda}\in \Bbb Z^n$ and $\det(B) {B^*}^{-1}\in M_n(\mathbb{Z})$, so that ${M^*}^{\tilde{j}}\lambda\in \mathbb{Z}^n.$ Hence $({M^*}^{\tilde{j}}\mathcal{Z}_{D}^n)\cap \mathbb{Z}^n\neq\emptyset$ follows from \eqref{eq(1.6.0)}.

Now we prove $n^*(\mu_{M,D})=n^*(\mu_{\tilde{M},\tilde{D}})$ if  $\det (M)\notin p\mathbb{Z}$.
Firstly, we show that $n^*(\mu_{M,D})\leq n^*(\mu_{\tilde{M},\tilde{D}})$. Suppose that $E(\Lambda)$ is an orthogonal set of
exponential functions in $L^2(\mu_{M,D})$,  then \eqref{1.4} implies that
$$
\left(\Lambda-\Lambda\right)\setminus\{0\}\subset\mathcal{Z}(\hat{\mu}_{M, D}).
$$
From Proposition \ref{lem(4.3)}(ii), it is easy to see  that
\begin{align*}
c_1B^*\left(\Lambda-\Lambda\right)\setminus\{0\}\subset c_1B^*\mathcal{Z}(\hat{\mu}_{M, D})\subset\mathcal{Z}(\hat{\mu}_{\tilde{M}, \tilde{D}}),
\end{align*}
where $c_1=\det(AB){\left|\det \tilde{(M)} \right|}^{(p-1)(p^n-1)}$.
This together with \eqref{1.4} yields that $E(c_1B^*\Lambda)$ is an orthogonal set of exponential functions in $L^2(\mu_{\tilde{M},\tilde{D}})$, and hence $n^*(\mu_{M,D})\leq n^*(\mu_{\tilde{M},\tilde{D}})$.

Secondly, we show that $n^*(\mu_{M,D})\geq n^*(\mu_{\tilde{M},\tilde{D}})$.  If $E(\tilde{\Lambda})$ is an orthogonal set of
exponential functions in $L^2(\mu_{\tilde{M},\tilde{D}})$,  then \eqref{1.4} implies that
$$
 (\tilde{\Lambda}-\tilde{\Lambda}) \setminus\{0\}\subset\mathcal{Z}(\hat{\mu}_{\tilde{M}, \tilde{D}}).
$$
In view of Proposition \ref{lem(4.3)}(ii), we have
\begin{align*}
c_2A^*(\tilde{\Lambda}-\tilde{\Lambda}) \setminus\{0\}\subset\mathcal{Z}(\hat{\mu}_{M, D}),
\end{align*}
where $c_2={\left|\det M \right|}^{(p-1)(p^n-1)}$.  Together with \eqref{1.4}, this yields that $E(c_2A^*\tilde{\Lambda})$ is an orthogonal set of exponential functions in $L^2(\mu_{M,D})$, and hence $n^*(\mu_{M,D})\geq n^*(\mu_{\tilde{M},\tilde{D}})$.
Combining with $n^*(\mu_{M,D})\leq n^*(\mu_{\tilde{M},\tilde{D}})$, we obtain $n^*(\mu_{M,D})= n^*(\mu_{\tilde{M},\tilde{D}})$.

Finally,  we prove $n^*(\mu_{\tilde{M},\tilde{D}})\leq p^n$. It is easy to see  that
$\gcd(\det(\tilde{M}),p)=1$ since $A,B\in GL_n(p)$ and $\det(M)\notin p\mathbb{Z}$. By Lemma \ref{lem(2.5)}(i), we have $\tilde{M^*}^{\tilde{j}}\mathring{E}_p^n=\mathring{E}_p^n \pmod {\mathbb{Z}}$ for any $\tilde{j}\geq 1$.
Thus it follows from $\mathcal{Z}_{\tilde{D}}^n\subset \mathring{E}_p^n$, \eqref{eq(1.6.0)} and \eqref{1.3} that
\begin{align}\label{eq(3.3.5)}
\mathcal{Z}(\hat{\mu}_{\tilde{M},\tilde{D}})
=\bigcup_{\tilde{j}=1}^\infty\tilde{M^*}^{\tilde{j}}\mathcal{Z}(m_{\tilde{D}})
\subset\bigcup_{\tilde{j}=1}^\infty\tilde{M^*}^{\tilde{j}}(\mathring{E}_p^n+\mathbb{Z}^n)
\subset \mathring{E}_p^n+\mathbb{Z}^n.
\end{align}
Now we suppose that  there exists $\tilde{\Lambda}=\{0,\tilde{\lambda}_1,\tilde{\lambda}_2,\cdots, \tilde{\lambda}_{p^n}\}$ such that $E(\tilde{\Lambda})$ is an orthogonal set of exponential functions in $L^2(\mu_{\tilde{M},\tilde{D}})$,  then the orthogonality of $E(\tilde{\Lambda})$ and \eqref{eq(3.3.5)}  imply  that $\tilde{\Lambda}\setminus\{0\}\subset \mathring{E}_p^n+\mathbb{Z}^n$. It is clear that $|\mathring{E}_p^n|=p^n-1$, by \eqref{eq(3.3.5)} again, there must exist $1\leq j_1,j_2\leq p^n$ and an integer vector $v'\in \mathbb{Z}^n$ such that $\tilde{\lambda}_{j_1}-\tilde{\lambda}_{j_2}=v' \notin \mathcal{Z}(\hat{\mu}_{\tilde{M},\tilde{D}})$,  which  contradicts  the  orthogonality of $E(\tilde{\Lambda})$. Therefore, $n^*(\mu_{\tilde{M},\tilde{D}})\leq p^n$.

The proof of Theorem \ref{th(3.2)} is completed.
\end{proof}

\begin{re}\label{re(3.3)}
 We point out that we do not consider the case where $\det(M)\in p\mathbb{Z}$ and $L^2(\mu_{M,D})$ has only finite orthogonal exponents in Theorem \ref{th(3.2)}. In this case,  it is difficult to verify whether $n^*(\mu_{M,D})=n^*(\mu_{\tilde{M},\tilde{D}})$ holds for the conjugated pair $(M,D)$ and $(\tilde{M}, \tilde{D})$ in $GL_n(p)$.
\end{re}

 In view of Theorem \ref{th(3.2)},  we can easily extend the results of Theorem \ref{th(1.6)}(ii) to the following general form.

\begin{cor}\label{th(3.3)}
Let $M\in M_2(\mathbb{Z})$ be an  expanding matrix and $D=\{(0,0)^t,(\alpha_1,\alpha_2)^t,(\beta_1,\beta_2)^t\}$  be an integer digit set with $\alpha_1\beta_2-\alpha_2\beta_1\notin 3\mathbb{Z}$. If $\det(M)\notin 3\mathbb{Z}$, then $L^2(\mu_{M,D})$ contains at most $9$ mutually
orthogonal exponential functions, and the number $9$ is the best. Moreover, the best number can be attained if and only if $AMB \in \mathfrak{M}_2  \pmod 3 $, where   $B=\left[ {\begin{array}{*{20}{c}}
\alpha_1&\beta_1\\
\alpha_2&\beta_2
\end{array}} \right]$  and $A\in GL_2(3)$  satisfy $AB=I \pmod 3$.
\end{cor}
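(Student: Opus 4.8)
The plan is to realize the general digit set $D$ as a $GL_2(3)$-image of the standard Sierpinski digit set $\mathcal D$ of \eqref{eq(1.9)}, and then transport the known sharp count for $\mu_{M,\mathcal D}$ through Theorem \ref{th(3.2)}. Writing $B=\begin{bmatrix}\alpha_1&\beta_1\\ \alpha_2&\beta_2\end{bmatrix}$, the hypothesis $\alpha_1\beta_2-\alpha_2\beta_1=\det B\notin 3\mathbb Z$ says exactly that $B$ is nonsingular modulo $3$, so $B\in GL_2(3)$ and we may fix $A\in GL_2(3)$ with $AB=I\pmod 3$. A direct check gives $B\mathcal D=\{(0,0)^t,(\alpha_1,\alpha_2)^t,(\beta_1,\beta_2)^t\}=D$, so with $\tilde M:=AMB$ and $\tilde D:=\mathcal D$ the pairs $(M,D)$ and $(\tilde M,\mathcal D)$ satisfy condition (i) of the definition of conjugacy in $GL_2(3)$. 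Moreover $\mathcal Z_{\mathcal D}^2=\mathcal Z_{D_1}^2=\{(1/3,2/3)^t,(2/3,1/3)^t\}\subset\mathring E_3^2$, as computed in Section \ref{sect.1}, so the standing hypothesis $\mathcal Z_{\tilde D}^2\subset\mathring E_3^2$ of Theorem \ref{th(3.2)} is met.

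Since $\det M\notin 3\mathbb Z$, Theorem \ref{th(3.2)} with $p=3$, $n=2$ yields $n^*(\mu_{M,D})=n^*(\mu_{\tilde M,\mathcal D})\le 3^2=9$, which already gives the bound ``at most $9$''. It then remains to characterize equality, and for this I apply Theorem \ref{th(1.6)}(ii) to the pair $(\tilde M,\mathcal D)$: it asserts $n^*(\mu_{\tilde M,\mathcal D})=9$ if and only if $\tilde M\in\mathfrak M_2\pmod 3$. Combining the two statements gives $n^*(\mu_{M,D})=9$ if and only if $AMB\in\mathfrak M_2\pmod 3$, which is the asserted criterion; the sharpness of $9$ (that $9$ is the best possible) transfers from the sharpness in Theorem \ref{th(1.6)}(ii) together with the nonemptiness of $\mathfrak M_2$ in \eqref{eq(1.12)}.

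The delicate point, and the main obstacle, is that Theorem \ref{th(1.6)}(ii) requires $\tilde M$ to be an \emph{expanding} integer matrix with $\det\tilde M\notin 3\mathbb Z$, whereas $A$ is determined only modulo $3$ and $\tilde M=AMB$ need not be expanding for an arbitrary representative. I resolve this by exploiting the freedom in choosing $A$: replacing $A$ by $A+3tI$ for a large integer $t$ preserves $AB\equiv I\pmod 3$ and keeps the residue $\tilde M\equiv B^{-1}MB\pmod 3$ unchanged, while $\tilde M=(A+3tI)MB=AMB+3t\,MB$ has eigenvalues comparable to $3t\,\lambda(MB)\to\infty$ as $t\to\infty$ (recall $MB$ is nonsingular), so $\tilde M$ is expanding once $t$ is large. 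Since $\det\tilde M=\det(A)\det(M)\det(B)$ with each factor coprime to $3$, also $\det\tilde M\notin 3\mathbb Z$. Thus Theorem \ref{th(1.6)}(ii) applies legitimately to this representative, and because its conclusion depends only on $\tilde M\pmod 3$ the particular choice of $t$ is immaterial. This expanding-representative verification is where the care is needed; the remainder is a direct assembly of Theorem \ref{th(3.2)} and Theorem \ref{th(1.6)}(ii).
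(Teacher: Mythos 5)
Your proof is correct and takes essentially the same route as the paper: write $D=B\mathcal{D}$, use Theorem \ref{th(3.2)} (with $p=3$, $\tilde{D}=\mathcal{D}$ and $\mathcal{Z}_{\mathcal{D}}^2\subset\mathring{E}_3^2$) to transfer $n^*$, and then apply Theorem \ref{th(1.6)}(ii) to the pair $(AMB,\mathcal{D})$. The one point where you go beyond the paper is the expanding-representative step: the paper's proof applies Theorem \ref{th(1.6)}(ii) to $\tilde{M}=AMB$ without checking that $\tilde{M}$ is expanding (which neither the conjugacy definition nor the hypotheses guarantee, and which is needed even for $\mu_{\tilde{M},\mathcal{D}}$ to be defined), whereas you replace $A$ by $A+3tI$ and observe that $(A+3tI)MB=3t\bigl(I+\tfrac{1}{3t}A\bigr)MB$ has all eigenvalues of modulus comparable to $3t$ times those of the nonsingular matrix $MB$, hence is expanding for $t$ large, while the residue of $AMB$ modulo $3$ and the coprimality of $\det\tilde{M}$ to $3$ are unchanged. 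This repair is valid and fills a gap that the paper's own proof leaves implicit; apart from it, the two arguments coincide.
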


\begin{proof}
Let $\tilde{M}=AMB$, $\tilde{D}=\mathcal {D}=\{(0,0)^t,(1,0)^t,(0,1)^t\}$, then $D=B\tilde{D}$. By Theorem \ref{th(3.2)}, we only need to prove  that  the conclusions hold for the pair $(\tilde{M},\tilde{D})$. Obviously, Theorem \ref{th(1.6)}(ii) implies that Corollary \ref{th(3.3)} holds since $\det(\tilde{M})=\det(AMB)\notin 3\mathbb{Z}$. The proof is completed.
\end{proof}

 Another well-known  planar digit set is $D_0=\{(0, 0)^t, (1, 0)^t, (0, 1)^t, (-1, -1)^t\}$, it is easy to calculate that $\mathcal{Z}_{D_0}^2=\{(1/2, 0)^t, (0, 1/2)^t, (1/2, 1/2)^t\}$. By Theorem 1.1 of \cite{Liu-Dong-Li_2017},  there exist at most $4$ mutually orthogonal exponential functions in  $L^2(\mu_{M,D_0})$ if $\det(M)\notin 2\mathbb{Z}$,  and the number 4 is the best.  Applying Theorem \ref{th(3.2)},  we can extend the above  digit set  to the following more general form
\begin{equation}\label{eq(3.14)}
D=\left\{\begin{pmatrix}
0\\
0\end{pmatrix},\begin{pmatrix}
\alpha_{1}\\
\alpha_{2}\end{pmatrix},\begin{pmatrix}
\beta_{1}\\
\beta_{2}\end{pmatrix},\begin{pmatrix}
-\alpha_{1}-\beta_{1}\\
-\alpha_{2}-\beta_{2}\end{pmatrix}\right\}\subset \Bbb Z^2\;\;\mbox{with}\;\; \alpha_1\beta_2-\alpha_2\beta_1\notin 2\mathbb{Z}.
\end{equation}

\begin{cor} \label{th(3.4)}
Let $M\in M_2(\Bbb Z)$ be an expanding integer matrix with $\det(M)\notin 2\mathbb{Z}$, and let $D$, $\mu_{M,D}$
be defined by \eqref{eq(3.14)} and \eqref{eq(1.1)}, respectively. Then there exist at most $4$ mutually
orthogonal exponential functions  in $L^2(\mu_{M,D})$, and the number $4$ is the best.
\end{cor}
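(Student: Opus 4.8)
The plan is to reduce Corollary \ref{th(3.4)} to the already-cited four-term result of \cite{Liu-Dong-Li_2017} via a conjugation in $GL_2(2)$, exactly paralleling the proof of Corollary \ref{th(3.3)}. First I would set $B=\left[\begin{smallmatrix}\alpha_1&\beta_1\\\alpha_2&\beta_2\end{smallmatrix}\right]$ and observe that the hypothesis $\alpha_1\beta_2-\alpha_2\beta_1\notin 2\mathbb{Z}$ means $\det(B)$ is odd, so $B\in GL_2(2)$; choose $A\in GL_2(2)$ with $AB=I\pmod 2$, and put $\tilde{M}=AMB$ and $\tilde{D}=D_0=\{(0,0)^t,(1,0)^t,(0,1)^t,(-1,-1)^t\}$. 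A direct check shows $B\tilde{D}=D$ modulo the construction (the four vectors $0,\,B(1,0)^t,\,B(0,1)^t,\,B(-1,-1)^t$ are precisely $0,\,(\alpha_1,\alpha_2)^t,\,(\beta_1,\beta_2)^t,\,(-\alpha_1-\beta_1,-\alpha_2-\beta_2)^t$), so $(M,D)$ and $(\tilde{M},\tilde{D})$ are conjugate in $GL_2(2)$ with prime $p=2$.

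Next I would verify the zero-set hypothesis needed to apply Theorem \ref{th(3.2)}. Since $\tilde{D}=D_0$, the excerpt already records $\mathcal{Z}_{D_0}^2=\{(1/2,0)^t,(0,1/2)^t,(1/2,1/2)^t\}$, and comparing with \eqref{eq(1.7)} this is exactly $\mathring{E}_2^2$. Hence $\mathcal{Z}_{\tilde{D}}^2\subset\mathring{E}_2^2$, which is the required condition in Theorem \ref{th(3.2)}. Moreover $\det(M)\notin 2\mathbb{Z}$ together with $A,B\in GL_2(2)$ forces $\det(\tilde{M})=\det(A)\det(M)\det(B)\notin 2\mathbb{Z}$, so the hypothesis $\det(M)\notin p\mathbb{Z}$ of Theorem \ref{th(3.2)} holds with $p=2$.

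With these checks in place, Theorem \ref{th(3.2)} gives $n^*(\mu_{M,D})=n^*(\mu_{\tilde{M},\tilde{D}})\leq 2^2=4$. It then remains only to transfer the sharpness: by Theorem 1.1 of \cite{Liu-Dong-Li_2017}, since $\det(\tilde{M})\notin 2\mathbb{Z}$, the measure $L^2(\mu_{\tilde{M},\tilde{D}})$ admits at most $4$ mutually orthogonal exponentials and the bound $4$ is attained, so $n^*(\mu_{\tilde{M},\tilde{D}})=4$; by the equality $n^*(\mu_{M,D})=n^*(\mu_{\tilde{M},\tilde{D}})$ we conclude $n^*(\mu_{M,D})=4$, which is the assertion.

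I expect the only genuine obstacle to be the bookkeeping of the conjugation conventions: one must confirm that the relation $D=B\tilde{D}$ (rather than $\tilde{D}=AD$) is the branch of the definition being used, and that $B\in GL_2(2)$ rather than needing $B\in GL_2(p)$ for a different $p$. Everything else is a direct invocation of Theorem \ref{th(3.2)} and the cited sharpness result, so unlike the general propositions this corollary carries no new analytic difficulty; the work is entirely in matching the normalization $p=2$, $\tilde{D}=D_0$, and checking $\mathcal{Z}_{D_0}^2=\mathring{E}_2^2$.
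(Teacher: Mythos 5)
Your proposal is correct and is essentially the paper's own argument: the paper proves this corollary implicitly (via the paragraph preceding it and the same pattern as Corollary \ref{th(3.3)}), namely conjugating $(M,D)$ to $(\tilde{M},D_0)$ through $B=\left[\begin{smallmatrix}\alpha_1&\beta_1\\ \alpha_2&\beta_2\end{smallmatrix}\right]\in GL_2(2)$, noting $\mathcal{Z}_{D_0}^2=\mathring{E}_2^2$, and invoking Theorem \ref{th(3.2)} with $p=2$ together with the sharp four-term result of \cite{Liu-Dong-Li_2017}. Your verification of the branch $D=B\tilde{D}$, of $\det(\tilde{M})\notin 2\mathbb{Z}$, and of the zero-set condition matches exactly what the paper's reduction requires.
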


\section{\bf The spectrality of generalized  Sierpinski measures \label{sect.4}}

In this section, we will consider the spectrality of generalized Sierpinski measure, which is  generated by an expanding matrix $M\in M_2(\mathbb{Z})$ and a digit set $D=\{(0,0)^t,(\alpha_1,\alpha_2)^t,(\beta_1,\beta_2)^t\}
$ $\subset\Bbb Z^2$ with $\alpha_1\beta_2-\alpha_2\beta_1\notin 3\mathbb{Z}$. The main  purpose  of this section is to prove
Theorem \ref{th(1.8)}. Firstly, we  give some  preparatory lemmas and theorems.

In the study of spectrality of self-affine measure $\mu_{M,D}$, it is very difficult to show that $\mu_{M,D}$ is a non-spectral measure when $L^2(\mu_{M,D})$ has an infinite orthogonal set.  Dai, He and Lau \cite{Dai-He-Lau_2014} got the following useful non-spectral criterion, which is widely used by many researchers.
\begin{lemma}\label{lem4.4} \cite{Dai-He-Lau_2014}
Let $\mu=\mu_1\ast\mu_2$ be the convolution of two probability measures $\mu_i$, $i=1, 2$,
and they are not Dirac measures.
If $E(\Lambda)$ is an orthonormal set in $L^2(\mu_1)$,
then $E(\Lambda)$ is also an orthonormal set in $L^2(\mu)$,
but $\Lambda$ cannot be a spectrum of $\mu$.
\end{lemma}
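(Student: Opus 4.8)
The plan is to work entirely on the Fourier side, exploiting the multiplicativity $\hat{\mu}=\hat{\mu}_1\hat{\mu}_2$ together with the standard Jorgensen--Pedersen characterization of spectra. First I would record two elementary facts: that $|\hat{\mu}_2(\xi)|\le 1$ for every $\xi$ (since $\mu_2$ is a probability measure), and that, for any probability measure $\nu$, orthonormality of $E(\Lambda)$ in $L^2(\nu)$ is equivalent to $\hat{\nu}(\lambda-\lambda')=0$ for all distinct $\lambda,\lambda'\in\Lambda$, the unit-norm condition being automatic because $\|e_\lambda\|^2_{L^2(\nu)}=\int d\nu=1$. The first assertion of the lemma is then immediate: if $\hat{\mu}_1(\lambda-\lambda')=0$ for distinct $\lambda,\lambda'\in\Lambda$, then $\hat{\mu}(\lambda-\lambda')=\hat{\mu}_1(\lambda-\lambda')\,\hat{\mu}_2(\lambda-\lambda')=0$, so $E(\Lambda)$ remains orthonormal in $L^2(\mu)$.

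For the second assertion I would argue by contradiction, invoking the well-known criterion (Jorgensen--Pedersen) that for the spectral function $Q_\nu(\xi):=\sum_{\lambda\in\Lambda}|\hat{\nu}(\xi+\lambda)|^2$, the family $E(\Lambda)$ is orthonormal iff $Q_\nu\le 1$ everywhere and is an orthonormal basis (i.e.\ $\Lambda$ is a spectrum) iff $Q_\nu\equiv 1$. Suppose $\Lambda$ is a spectrum of $\mu$, so $Q_\mu\equiv 1$. Since $E(\Lambda)$ is orthonormal in $L^2(\mu_1)$, the Bessel bound gives $Q_{\mu_1}\le 1$, and combining this with $|\hat{\mu}_2|\le 1$ I would run the sandwich
\[
1=Q_\mu(\xi)=\sum_{\lambda\in\Lambda}|\hat{\mu}_1(\xi+\lambda)|^2\,|\hat{\mu}_2(\xi+\lambda)|^2\le\sum_{\lambda\in\Lambda}|\hat{\mu}_1(\xi+\lambda)|^2=Q_{\mu_1}(\xi)\le 1 .
\]
Equality throughout forces every summand to satisfy $|\hat{\mu}_1(\xi+\lambda)|^2\bigl(1-|\hat{\mu}_2(\xi+\lambda)|^2\bigr)=0$. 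Fixing any $\lambda_0\in\Lambda$ and letting $\xi$ vary so that $\eta:=\xi+\lambda_0$ sweeps out all of $\mathbb{R}^n$, this yields the key dichotomy: for every $\eta\in\mathbb{R}^n$, either $\hat{\mu}_1(\eta)=0$ or $|\hat{\mu}_2(\eta)|=1$. Because $\hat{\mu}_1(0)=1$ and $\hat{\mu}_1$ is continuous, $\hat{\mu}_1$ is nonvanishing on some open ball $U\ni 0$, whence $|\hat{\mu}_2(\eta)|=1$ for all $\eta\in U$.

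The final and most delicate step is to deduce from $|\hat{\mu}_2|\equiv 1$ on the open set $U$ that $\mu_2$ is a Dirac measure, contradicting the hypothesis. Here I would use the equality case of the triangle inequality for integrals: $|\hat{\mu}_2(\eta)|=1$ is equivalent to
\[
\iint\bigl(1-\cos 2\pi\langle x-y,\eta\rangle\bigr)\,d\mu_2(x)\,d\mu_2(y)=0 ,
\]
so that $\langle x-y,\eta\rangle\in\mathbb{Z}$ for $(\mu_2\times\mu_2)$-a.e.\ $(x,y)$. I expect the genuine obstacle to lie precisely in the order of the quantifiers ``for a.e.\ $(x,y)$'' and ``for all $\eta\in U$'': these cannot be swapped naively. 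The clean way is to choose a countable dense subset $\{\eta_k\}\subset U$, intersect the corresponding full-measure exceptional sets, and then use continuity of $\eta\mapsto\langle x-y,\eta\rangle$ and connectedness of $U\ni 0$ to upgrade the integrality to $\langle x-y,\eta\rangle\equiv 0$ on $U$ for $(\mu_2\times\mu_2)$-a.e.\ pair; since $U$ contains a neighborhood of the origin this forces $x=y$ a.e., i.e.\ $\mu_2$ is supported on a single point. This contradicts the assumption that $\mu_2$ is not a Dirac measure (which is the only place the non-Dirac hypothesis is actually consumed), completing the proof.
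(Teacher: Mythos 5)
The paper does not actually prove this lemma: it is quoted verbatim from the cited reference of Dai, He and Lau, so there is no internal proof to compare against. Judged on its own, your proof is correct and complete, and it follows the same route as the standard argument in that reference: the first assertion via $\hat{\mu}=\hat{\mu}_1\hat{\mu}_2$ and the automatic unit norms, and the second via the sandwich $1=Q_\mu(\xi)\le Q_{\mu_1}(\xi)\le 1$ (Parseval for the spectrum direction, Bessel for the upper bound), whose equality case forces the dichotomy ``$\hat{\mu}_1(\eta)=0$ or $|\hat{\mu}_2(\eta)|=1$'' for every $\eta$, hence $|\hat{\mu}_2|\equiv 1$ on a ball $U\ni 0$ by continuity of $\hat{\mu}_1$ and $\hat{\mu}_1(0)=1$. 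Your treatment of the one genuinely delicate point is also right: from $\iint\bigl(1-\cos 2\pi\langle x-y,\eta\rangle\bigr)\,d\mu_2\,d\mu_2=0$ one only gets $\langle x-y,\eta\rangle\in\mathbb{Z}$ a.e.\ for each \emph{fixed} $\eta$, and your repair --- intersecting the full-measure sets over a countable dense subset of $U$, then using closedness of $\mathbb{Z}$, continuity in $\eta$, and connectedness of $U$ with value $0$ at $\eta=0$ --- legitimately upgrades this to $x=y$ for $(\mu_2\times\mu_2)$-a.e.\ pair, whence $\mu_2$ is Dirac, a contradiction. Two minor observations, neither a flaw: the direction of Jorgensen--Pedersen you invoke ($\Lambda$ a spectrum $\Rightarrow Q_\mu\equiv 1$) is pure Parseval and needs no compact-support hypothesis; and, as you note, only the non-Diracness of $\mu_2$ is consumed --- if $\mu_1$ were Dirac your dichotomy would force $|\hat{\mu}_2|\equiv 1$ everywhere and yield the same contradiction, so the hypothesis on $\mu_1$ in the statement is redundant for your argument.
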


For the proof of Theorem \ref{th(1.8)}, the main  difficulty is to prove the situation ``$(i)\Rightarrow (iii)$''. In order to get it, we give a
non-spectral criterion for some special self-affine measures.
\begin{thm}\label{th(4.3.1)}
Let $M\in M_n(\mathbb{R})$ be an expanding matrix and $D$ be a finite  digit set of $\mathbb{R}^n$. If there exist  a real  $L>0$ and  an integer $j_0\geq 2$ such that $\emptyset\neq L(\mathcal{Z}(m_D)-\mathcal {Z}(m_D))\setminus\mathbb{Z}^n\subset L\mathcal {Z}(m_D)$,
\begin{equation} \label{eq(4.3.1)}
\left(L\bigcup_{j=1}^{j_0-1}{M^*}^j(\mathcal{Z}(m_D))\right)\bigcap\mathbb{Z}^n=\emptyset \ \ \ {\rm and}
 \ \ \ L{M^*}^{j}(\mathcal{Z}(m_D))\subset \mathbb{Z}^n \ {\rm for \  all } \  j\geq j_0,
\end{equation}
then $\mu_{M,D}$, defined as in \eqref{eq(1.1)}, is a non-spectral measure.
\end{thm}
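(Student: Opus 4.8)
The engine of the proof is the convolution criterion of Lemma \ref{lem4.4}. The plan is to split the infinite product \eqref{1.2} at level $j_0$ and write $\mu_{M,D}=\mu_1\ast\mu_2$, where $\mu_1$ is the \emph{tail} factor determined by $\widehat{\mu_1}(\xi)=\prod_{j\ge j_0}m_D({M^*}^{-j}\xi)$ and $\mu_2$ is the \emph{head} factor with $\widehat{\mu_2}(\xi)=\prod_{j=1}^{j_0-1}m_D({M^*}^{-j}\xi)$. Here $\mu_2$ is a finitely supported atomic measure (non-Dirac, because the nonemptiness in \eqref{eq(4.3.1)} forces $\mathcal{Z}(m_D)\ne\emptyset$ and hence $|D|\ge 2$, while $j_0\ge 2$ keeps at least the level $j=1$), and $\mu_1$ is a nonatomic affine copy of $\mu_{M,D}$; thus both factors satisfy the hypotheses of Lemma \ref{lem4.4}. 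Using \eqref{1.3} together with \eqref{eq(4.3.1)}, I would first record the clean dichotomy $\mathcal{Z}(\widehat{\mu_1})=\bigcup_{j\ge j_0}{M^*}^j\mathcal{Z}(m_D)\subset\frac{1}{L}\mathbb{Z}^n$ and $\mathcal{Z}(\widehat{\mu_2})=\bigcup_{j=1}^{j_0-1}{M^*}^j\mathcal{Z}(m_D)$ with $\mathcal{Z}(\widehat{\mu_2})\cap\frac{1}{L}\mathbb{Z}^n=\emptyset$. Consequently $\mathcal{Z}(\widehat{\mu_1})=\mathcal{Z}(\widehat{\mu}_{M,D})\cap\frac{1}{L}\mathbb{Z}^n$; that is, the lattice $\frac{1}{L}\mathbb{Z}^n$ exactly separates the tail zeros from the head zeros.

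Granting this, I would argue by contradiction: suppose $\mu_{M,D}$ is spectral with spectrum $\Lambda$, normalized so that $0\in\Lambda$. The target is the confinement $\Lambda\subset\frac{1}{L}\mathbb{Z}^n$. Once this is in hand, \eqref{1.4} gives $(\Lambda-\Lambda)\setminus\{0\}\subset\mathcal{Z}(\widehat{\mu}_{M,D})\cap\frac{1}{L}\mathbb{Z}^n=\mathcal{Z}(\widehat{\mu_1})$, so $E(\Lambda)$ is an orthonormal set in $L^2(\mu_1)$; Lemma \ref{lem4.4} then forbids $\Lambda$ from being a spectrum of $\mu_{M,D}=\mu_1\ast\mu_2$, which is the desired contradiction. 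Hence the entire proof reduces to showing that any spectrum is confined to the lattice $\frac{1}{L}\mathbb{Z}^n$.

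To attack the confinement I would decompose $\Lambda$ into cosets of $\frac{1}{L}\mathbb{Z}^n$. Two points of $\Lambda$ in the same coset have difference in $\mathcal{Z}(\widehat{\mu}_{M,D})\cap\frac{1}{L}\mathbb{Z}^n=\mathcal{Z}(\widehat{\mu_1})$, while two points in different cosets have difference in the head zero set $\mathcal{Z}(\widehat{\mu_2})$. Selecting one representative from each coset met by $\Lambda$ therefore produces an orthogonal set of exponentials for the discrete measure $\mu_2$; since $\dim L^2(\mu_2)=|\mathrm{supp}\,\mu_2|\le |D|^{\,j_0-1}<\infty$, only finitely many cosets can occur. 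The role of the difference-closure hypothesis $L(\mathcal{Z}(m_D)-\mathcal{Z}(m_D))\setminus\mathbb{Z}^n\subset L\mathcal{Z}(m_D)$ in \eqref{eq(4.3.1)} is then to drive an iteration: a scaled difference of zeros that escapes $\mathbb{Z}^n$ is forced back into $L\mathcal{Z}(m_D)$, which, combined with $L{M^*}^j\mathcal{Z}(m_D)\subset\mathbb{Z}^n$ for $j\ge j_0$, is meant to upgrade any point sitting in a nontrivial coset into an infinite orthogonal family competing with the completeness of $\Lambda$. The intended conclusion is that this can be sustained only for the trivial coset, collapsing $\Lambda$ onto $\frac{1}{L}\mathbb{Z}^n$.

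I expect the genuine difficulty to be precisely this last step: passing from \emph{finitely many cosets} to \emph{one coset}. Orthogonality is symmetric between head and tail and does not detect the asymmetry responsible for non-spectrality, so the argument must bring in the completeness of a putative spectrum (equivalently $\sum_{\lambda\in\Lambda}|\widehat{\mu}_{M,D}(\xi+\lambda)|^2\equiv 1$) and play it against the fixed finite-dimensional head factor $\mu_2$. Getting the difference-closure hypothesis \eqref{eq(4.3.1)} to interact correctly with this completeness identity — so that a cross-coset element necessarily generates more orthogonal exponentials than $\mu_2$ can support — is the crux, and is where I would concentrate the bulk of the technical work.
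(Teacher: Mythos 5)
Your reduction is set up correctly (the head/tail split at level $j_0$, the separation $\bigcup_{j\geq j_0}{M^*}^j\mathcal{Z}(m_D)\subset\frac{1}{L}\mathbb{Z}^n$ versus head zeros off the lattice, and the final application of Lemma \ref{lem4.4}), but the proof is incomplete at exactly the point you flag as the crux, and the gap is not a routine one. Your plan requires showing that every spectrum $\Lambda\ni 0$ lies in the single coset $\frac{1}{L}\mathbb{Z}^n$. Orthogonality alone cannot give this: under the hypotheses there exist orthogonal sets meeting several cosets, e.g.\ $\{0,\lambda\}$ with $\lambda\in M^*\mathcal{Z}(m_D)$, and more generally orthogonal sets whose nonzero elements and differences lie in $\bigcup_{j\neq j_0}{M^*}^j\mathcal{Z}(m_D)$ can freely mix head levels (off the lattice) with levels $j>j_0$ (on it). So confinement could only come from completeness, and you never produce that argument: the statement that a cross-coset element generates ``an infinite orthogonal family competing with the completeness of $\Lambda$'' is a hope, not a proof, and the finiteness of $\dim L^2(\mu_2)$ only bounds the number of cosets by $|D|^{j_0-1}$ — it yields no contradiction with the identity $\sum_{\lambda\in\Lambda}|\hat{\mu}_{M,D}(\xi+\lambda)|^2\equiv 1$ by itself.

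The paper closes this gap by a different device that avoids completeness entirely: it uses \emph{two} convolution decompositions rather than your single head/tail split. Besides the tail factor $\mu_2=\delta_{M^{-j_0}D}*\delta_{M^{-(j_0+1)}D}*\cdots$, it also writes $\mu_{M,D}=\mu_1*\delta_{M^{-j_0}D}$, where $\mu_1$ omits \emph{only} the level-$j_0$ factor. A pure orthogonality argument then shows that any orthogonal set $\Lambda\ni 0$ satisfies $(\Lambda-\Lambda)\setminus\{0\}\subset\mathcal{Z}(\hat{\mu}_1)$ or $(\Lambda-\Lambda)\setminus\{0\}\subset\mathcal{Z}(\hat{\mu}_2)$: if $\lambda_1\in{M^*}^{j_0}\mathcal{Z}(m_D)$ and $\lambda_2\in{M^*}^{k_0}\mathcal{Z}(m_D)$ with $1\leq k_0\leq j_0-1$ both belonged to $\Lambda$, one writes $\lambda_1-\lambda_2={M^*}^{j}\xi_3$ with $\xi_3\in\mathcal{Z}(m_D)$, multiplies by suitable powers of $M^*$, and the integrality half of \eqref{eq(4.3.1)} kills every index configuration except $j=k_0$; in that last case $L{M^*}^{j_0}\xi_1=L{M^*}^{k_0}(\xi_2+\xi_3)$, and the difference-closure hypothesis forces either $L(\xi_2+\xi_3)\in\mathbb{Z}^n$ or $\xi_2+\xi_3\in\mathcal{Z}(m_D)$, both of which contradict $L{M^*}^{k}\mathcal{Z}(m_D)\cap\mathbb{Z}^n=\emptyset$ for $1\leq k\leq j_0-1$. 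Lemma \ref{lem4.4}, applied to whichever of the two decompositions is relevant, then disqualifies $\Lambda$ as a spectrum in either case. Note that this is where the difference-closure hypothesis genuinely enters — inside an index-by-index computation with zeros — not in any interaction with completeness. To repair your argument you would either have to supply the missing completeness step (which the paper never needs), or adopt the paper's dichotomy, which requires the second, omit-one-factor decomposition that your single split does not provide.
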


\begin{proof}
Let $\mathcal {Z}_j:={M^*}^j\mathcal {Z}(m_D)$, and we denote
\begin{equation}\label{4.4}
\mu_1=\delta_{M^{-1}D}*\cdots*\delta_{M^{-{(j_0-1)}}D}*\delta_{M^{-(j_0+1)}D}*\delta_{M^{-(j_0+2)}D}*\cdots,  \ \ \ \
\mu_{2}=\delta_{M^{-j_0}D}*\delta_{M^{-(j_0+1)}D}*\cdots.
\end{equation}
Then $\mu_{M,D}=\mu_1*\delta_{M^{-{j_0}}D}=\mu_{2}*\delta_{M^{-1}D}*\cdots*\delta_{M^{-{(j_0-1)}}D}$. Moreover,
\begin{equation}\label{4.5}
\mathcal{Z}(\hat{\mu}_1)=\bigcup_{j\geq 1, j\neq j_0}\mathcal {Z}_ j\;\;\;\; \mbox{and} \; \;\;\;
\mathcal{Z}(\hat{\mu}_2)=\bigcup_{j\geq j_0}\mathcal {Z}_j.
\end{equation}

In order to obtain  that  $\mu_{M,D}$ is a non-spectral measure, from Lemma \ref{lem4.4}, it is sufficient to prove that any orthogonal set $E(\Lambda)$ of $L^2(\mu_{M,D})$ with $0\in \Lambda$ is either an orthogonal set of $L^2(\mu_1)$ or $L^2(\mu_2)$, i.e,
\begin{equation}\label{4.3.1}
(\Lambda-\Lambda)\setminus\{0\} \subset \mathcal{Z}(\hat{\mu}_1)\;\;\;\;\mbox{or}\;\;\;\;(\Lambda-\Lambda)\setminus\{0\} \subset \mathcal{Z}(\hat{\mu}_2).
\end{equation}
We first show that $\Lambda \subset \mathcal{Z}(\hat{\mu}_1)$ or $\Lambda \subset \mathcal{Z}(\hat{\mu}_2)$. Otherwise, there exist  different $\lambda_1,\lambda_2 \in \Lambda$ satisfying $\lambda_1\in \mathcal {Z}_{j_0}$ and $\lambda_2\in \mathcal {Z}_{k_0}$ for some $1\leq k_0\leq j_0-1$.
By the orthogonality of $\Lambda$, we have
\begin{equation}\label{4.6}
L\lambda_1-L\lambda_2=L{M^*}^{j_0}\xi_1-L{M^*}^{k_0}\xi_2=L{M^*}^{j}\xi_3
\end{equation}
for some $\xi_1,\xi_2,\xi_3\in \mathcal {Z}(m_D)$ and $j\geq 1$.
 We will prove that \eqref{4.6} cannot hold by dividing into the following two cases.

Case I: $j\geq j_0$.  By \eqref{eq(4.3.1)}, we see that  $L{M^*}^{j}\xi_3, L{M^*}^{j_0}\xi_1 \in \mathbb{Z}^n$. However, $L{M^*}^{k_0}\xi_2\notin \mathbb{Z}^n$
since $1\leq k_0\leq j_0-1$, this shows that  \eqref{4.6} does not  hold.

Case II: $j\leq j_0-1$.   If $k_0\neq j$, without loss of generality, we can assume $j>k_0$. Multiplying both sides of \eqref{4.6} by ${M^*}^{j_0-j}$ (if $j<k_0$, then multiplying ${M^*}^{j_0-k_0}$) yields that
\begin{equation}\label{4.7}
L{M^*}^{2j_0-j}\xi_1-L{M^*}^{k_0+j_0-j}\xi_2=L{M^*}^{j_0}\xi_3.
\end{equation}
Similar to Case I,  \eqref{4.7} also does not hold  since $L{M^*}^{j_0}\xi_3, L{M^*}^{2j_0-j}\xi_1\in \mathbb{Z}^n$ and $L{M^*}^{k_0+j_0-j}\xi_2\notin \mathbb{Z}^n$.

If $k_0=j$,  then \eqref{4.6}  is equivalent to
\begin{equation}\label{4.8}
L{M^*}^{j_0}\xi_1=L{M^*}^{k_0}(\xi_2+\xi_3).
\end{equation}
If $L(\xi_2+\xi_3)\in\mathbb{Z}^n$, then \eqref{4.8} implies that $L{M^*}^{j_0-k_0}\xi_1=L(\xi_2+\xi_3)\in \mathbb{Z}^n$,  it contradicts the assumption of \eqref{eq(4.3.1)}.   If $L(\xi_2+\xi_3)\notin\mathbb{Z}^n$, by noting that $\mathcal {Z}(m_D)=-\mathcal {Z}(m_D)$ and
$L(\mathcal{Z}(m_D)-\mathcal {Z}(m_D))\setminus  \Bbb Z^n
 \subset L\mathcal {Z}(m_D)$, we have $\xi_{23}:=\xi_2+\xi_3=\xi_2-(-\xi_3)\in\mathcal {Z}(m_D)$. Then  \eqref{eq(4.3.1)}  and \eqref{4.8} show that $L{M^*}^{k_0} \xi_{23}=L{M^*}^{j_0}\xi_1\in \mathbb{Z}^n$,  which is also a  contradiction. This tells us that $\Lambda \subset \mathcal{Z}(\hat{\mu}_1)$ or $\Lambda \subset \mathcal{Z}(\hat{\mu}_2)$.

Finally, we prove that \eqref{4.3.1} holds.
If $\Lambda \subset \mathcal{Z}(\hat{\mu}_2)$,   it follows easily from \eqref{eq(4.3.1)} that
\begin{equation}\label{4.7.1}
(\Lambda-\Lambda)\setminus\{0\}\subset \bigcup_{j=j_0}^\infty {M^*}^j\mathcal {Z}(m_D)=\mathcal{Z}(\hat{\mu}_2).
\end{equation}
If  $\Lambda \subset \mathcal{Z}(\hat{\mu}_1)$,  we  suppose $(\Lambda-\Lambda)\setminus\{0\}\not\subset\mathcal{Z}(\hat{\mu}_1)= \bigcup_{j=1,j\neq j_0}^\infty {M^*}^j\mathcal {Z}(m_D)$, i.e., there exist $\lambda_1, \lambda_2 \in \Lambda$ so that
\begin{equation}\label{eq(4.14)}
\lambda_1-\lambda_2={M^*}^{j_1}\xi_1-{M^*}^{j_2}\xi_2={M^*}^{j_0}\xi_3,
\end{equation}
where $\xi_1,\xi_2,\xi_3\in \mathcal {Z}(m_D)$ and positive integers  $j_1,j_2 \neq j_0$.
Similar to the discussion in Case II of \eqref{4.6}, it is easy to see that \eqref{eq(4.14)}
is untenable.
Hence $(\Lambda-\Lambda)\setminus\{0\}\subset\mathcal{Z}(\hat{\mu}_1)$. Combining this with \eqref{4.7.1}, we conclude that \eqref{4.3.1} holds.

This completes the proof of Theorem \ref{th(4.3.1)}.
\end{proof}

In fact, it is easy to check that  the condition $M\in\mathfrak{M}_1 \pmod 3$ in Theorem \ref{th(1.6)}(i) is equivalent to $M^*(1,-1)^t\in 3\mathbb{Z}^2$. Hence, we can rewrite Theorem \ref{th(1.6)}(i) as follows:
\begin{equation}\label{eq(4.8.1)}
\mu_{M,\mathcal {D}} \  {\rm \  is \ a \ spectral \  measure} \  \Longleftrightarrow (M, \mathcal {D}) {\rm  \ is \  admissible} \  \Longleftrightarrow
 M^*\left( {\begin{array}{*{20}{c}}
1\\
-1
\end{array}} \right)\in 3\mathbb{Z}^2.
\end{equation}

\begin{thm}\label{th(4.4)}
Let $M\in M_2(\mathbb{Z})$ be an expanding matrix and  $D=\{(0,0)^t,(\alpha_1,\alpha_2)^t,(\beta_1,\beta_2)^t\}$ be an integer digit set with $\alpha_1\beta_2-\alpha_2\beta_1\notin 3\mathbb{Z}$, and let $\mu_{M,D}$ be defined by \eqref{eq(1.1)}. Then the following statements  are equivalent:

(i) \ \  $\mu_{M,D}$ is a spectral measure;

(ii) \ \ $(M, D)$ is admissible;

(iii) \ \  $(AMB)^*(1,-1)^t\in 3\mathbb{Z}^2$,
where $B=\left[ {\begin{array}{*{20}{c}}
\alpha_1&\beta_1\\
\alpha_2&\beta_2
\end{array}} \right]$ and $A\in GL_2(3)$ satisfies $AB=I\pmod 3$.
\end{thm}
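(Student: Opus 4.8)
Theorem \ref{th(4.4)} establishes the equivalence of three conditions for the generalized Sierpinski measure. Let me plan how to prove this.

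The plan is to establish the three-way equivalence through the cycle $(iii)\Rightarrow(ii)\Rightarrow(i)\Rightarrow(iii)$, of which only the last implication is substantial. The organizing observation, used everywhere, is that the digit set factors as $D=B\mathcal{D}$ with $\mathcal{D}=\{(0,0)^t,(1,0)^t,(0,1)^t\}$ and $B\in GL_2(3)$, so that $(M,D)$ and $(\tilde{M},\mathcal{D})$ with $\tilde{M}=AMB$ are conjugate in $GL_2(3)$; moreover $\mathcal{Z}^2_{\mathcal{D}}=\{(1/3,2/3)^t,(2/3,1/3)^t\}\subset\mathring{E}_3^2$, and writing $w=(1,-1)^t$ one checks $\mathcal{Z}(m_{\mathcal{D}})=\pm\tfrac13 w+\mathbb{Z}^2$ together with $\mathcal{Z}(m_D)=B^{*-1}\mathcal{Z}(m_{\mathcal{D}})$ from \eqref{eq(3.11.1)}. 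For $(ii)\Leftrightarrow(iii)$ I would invoke Theorem \ref{th(3.1)} (conjugacy in $GL_2(3)$ preserves admissibility) to reduce to $(\tilde{M},\mathcal{D})$, and then \eqref{eq(4.8.1)}, which says $(\tilde{M},\mathcal{D})$ is admissible iff $\tilde{M}^*w=(AMB)^*w\in 3\mathbb{Z}^2$; this is precisely $(iii)$. The implication $(ii)\Rightarrow(i)$ is immediate from Theorem \ref{th(1.3)}.

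The heart of the argument is $(i)\Rightarrow(iii)$, which I would prove in contrapositive form: assuming $(AMB)^*w\notin 3\mathbb{Z}^2$, I must show $\mu_{M,D}$ is non-spectral. The natural dichotomy is whether $L^2(\mu_{M,D})$ carries infinitely many orthogonal exponentials. In the negative case the measure cannot be spectral, since any spectrum would be infinite; and by Lemma \ref{th(2.5.1)} (Li's criterion, applicable because $\mathcal{Z}^2_D$ is a finite subset of $\mathbb{Q}^2$) together with the equivalence proved inside Theorem \ref{th(3.2)}, this case occurs exactly when $\tilde{M}^{*j}w\not\equiv 0\pmod 3$ for every $j$. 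So it remains to treat the case where $\tilde{M}^{*j}w\equiv 0\pmod 3$ for some $j$; here I would set $j_0=\min\{j:\tilde{M}^{*j}w\equiv 0\pmod 3\}$, note that $j_0\ge 2$ by the standing hypothesis $\tilde{M}^*w\not\equiv 0$, and observe that once $\tilde{M}^{*j_0}w\equiv 0$ one has $\tilde{M}^{*j}w\equiv 0$ for all $j\ge j_0$.

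For this remaining case the plan is to apply the non-spectral criterion Theorem \ref{th(4.3.1)} to $\mu_{M,D}$ with the scaling constant $L=|\det B|$ and the integer $j_0$ above. The choice of $L$ is dictated by the fact that $C:=LB^{*-1}=\pm\mathrm{adj}(B^*)$ is then an integer matrix lying in $GL_2(3)$, which yields the clean inclusion $LM^{*j}\mathcal{Z}(m_D)\subset \pm\tfrac13 M^{*j}Cw+\mathbb{Z}^2$ for every $j$. Consequently the conditions $LM^{*j}\mathcal{Z}(m_D)\cap\mathbb{Z}^2=\emptyset$ (for $1\le j\le j_0-1$) and $LM^{*j}\mathcal{Z}(m_D)\subset\mathbb{Z}^2$ (for $j\ge j_0$) reduce to the single arithmetic statement $M^{*j}Cw\equiv 0\pmod 3$. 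Using $A^*\equiv (B^*)^{-1}\pmod 3$ (from $AB\equiv I$) one obtains the conjugacy $\tilde{M}^{*j}\equiv B^*M^{*j}(B^*)^{-1}\pmod 3$, and since $C\equiv |\det B|\,(B^*)^{-1}\pmod 3$ with $\det B$ invertible mod $3$, it follows that $M^{*j}Cw\equiv 0\pmod 3\iff \tilde{M}^{*j}w\equiv 0\pmod 3$. This matches exactly the behaviour recorded by $j_0$. Finally, the difference-set hypothesis of Theorem \ref{th(4.3.1)} holds because $(\mathcal{Z}(m_{\mathcal{D}})-\mathcal{Z}(m_{\mathcal{D}}))\setminus\mathbb{Z}^2=\mathcal{Z}(m_{\mathcal{D}})$, which transports through $B^{*-1}$ to give $L(\mathcal{Z}(m_D)-\mathcal{Z}(m_D))\setminus\mathbb{Z}^2=L\mathcal{Z}(m_D)\setminus\mathbb{Z}^2$.

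I expect the main obstacle to be the verification step in the previous paragraph: arranging a single scaling constant $L$ that simultaneously clears the denominators coming from $B^{*-1}$ (which lives in $\tfrac{1}{\det B}M_2(\mathbb{Z})$) and preserves the mod-$3$ information carried by $w$, so that the three hypotheses of Theorem \ref{th(4.3.1)} collapse to the purely modular criterion on $\tilde{M}^{*j}w$. The conceptual point that makes this work is the mod-$3$ conjugacy $\tilde{M}^{*j}\equiv B^*M^{*j}(B^*)^{-1}$, which lets the orbit analysis be performed with the standard matrix $\tilde{M}$ rather than with $M$ directly; identifying the correct $L$ and confirming that the escape index $j_0$ is well defined and at least $2$ are the delicate bookkeeping tasks, while the case split supplied by Lemma \ref{th(2.5.1)} handles the configurations in which no finite $j_0$ exists.
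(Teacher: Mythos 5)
Your proposal is correct, and its skeleton is the same as the paper's: the cycle $(iii)\Rightarrow(ii)\Rightarrow(i)\Rightarrow(iii)$ with the first two implications handled exactly as in the paper (Theorem \ref{th(3.1)} together with \eqref{eq(4.8.1)}, then Theorem \ref{th(1.3)}), and the contrapositive of $(i)\Rightarrow(iii)$ split into the same two cases according to whether the orbit ${(AMB)^*}^j(1,-1)^t$ ever enters $3\mathbb{Z}^2$, with Theorem \ref{th(4.3.1)} settling the eventual-vanishing case. Where you genuinely diverge is in the execution of both cases, and your choices are defensible and in places cleaner. For the never-vanishing case the paper gives a self-contained pigeonhole argument (via Proposition \ref{lem(4.3)}(i) and the inclusion \eqref{eq:4.19}) showing at most $9$ mutually orthogonal exponentials, whereas you obtain mere finiteness by citing Lemma \ref{th(2.5.1)} and Theorem \ref{th(3.2)}; both conclude non-spectrality, your route is shorter, and the paper's buys the sharp bound $9$ (which it wants anyway for Corollary \ref{th(3.3)}). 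For the eventual-vanishing case the paper takes $L=(\det(AB))^{j_0+1}$ and verifies the hypotheses of Theorem \ref{th(4.3.1)} by iterating Proposition \ref{lem(4.3)}(i) and tracking powers of $\det(AB)$, whereas you take $L=|\det B|$, so that $C:=LB^{*-1}=\pm\mathrm{adj}(B^*)$ is an integer matrix in $GL_2(3)$ and, writing $w=(1,-1)^t$, one gets $LM^{*j}\mathcal{Z}(m_D)=\pm\tfrac13 M^{*j}Cw+M^{*j}C\mathbb{Z}^2$; this collapses all three hypotheses of Theorem \ref{th(4.3.1)} to the single congruence $M^{*j}Cw\equiv 0\pmod 3$, which by $\tilde{M}^{*j}\equiv B^*M^{*j}(B^*)^{-1}\pmod 3$ and $C\equiv|\det B|\,(B^*)^{-1}\pmod 3$ is equivalent to $\tilde{M}^{*j}w\equiv 0\pmod 3$, i.e.\ precisely the behaviour recorded by $j_0$ --- a tidier verification than the paper's. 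Two points you should make explicit when writing this up: the nonemptiness demanded in the first hypothesis of Theorem \ref{th(4.3.1)} holds because $C\in GL_2(3)$ and $w\not\equiv 0\pmod 3$ force $Cw\not\equiv 0\pmod 3$ (so $L\mathcal{Z}(m_D)\cap\mathbb{Z}^2=\emptyset$), and the step ``finitely many orthogonal exponentials implies non-spectral'' uses the standard fact that $\mu_{M,D}$ is non-atomic, so that $L^2(\mu_{M,D})$ is infinite-dimensional and any spectrum must be infinite.
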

\begin{proof}
We will prove this theorem by the circle $(iii)\Rightarrow (ii)\Rightarrow (i)\Rightarrow (iii)$.

``$(iii)\Rightarrow (ii)$" \ \  Suppose $(AMB)^*(1,-1)^t\in 3\mathbb{Z}^2$. Let $ \tilde{M}=AMB$ and
$$
\tilde{D}=\mathcal {D}=\left\{\left( {\begin{array}{*{20}{c}}
	0\\
	0
	\end{array}} \right), \left( {\begin{array}{*{20}{c}}
	1\\
	0
	\end{array}} \right),\left( {\begin{array}{*{20}{c}}
	0\\
	1
	\end{array}} \right)\right\}.
$$ Then $D=B\tilde{D}$, $(M,D)$ and $(\tilde{M}, \tilde{D})$ are conjugate in $GL_n(p)$ (through the matrix $A,B\in GL_n(p)$). Obviously, $(\tilde{M}, \tilde{D})$ is admissible by Theorem \ref{th(1.6)}(i) and \eqref{eq(4.8.1)}, hence Theorem \ref{th(3.1)} implies  that $(M, D)$ is admissible.

``$(ii)\Rightarrow (i)$" \ \  The assertion follows directly from Theorem \ref{th(1.3)}.

``$(i)\Rightarrow (iii)$"  \ \ It is equivalent to show  that  $\mu_{M,D}$ is a non-spectral measure if $(AMB)^* (1,-1)^t$ $\notin 3\mathbb{Z}^2.$ We will prove this by dividing into two cases:
Case I:  ${(AMB)^*}^j (1,-1)^t$ $\notin 3\mathbb{Z}^2$ for all $j\geq 1$; Case II: there exists $j_0\geq 2$ such that  ${(AMB)^*}^j (1,-1)^t \in 3\mathbb{Z}^2$ for all $j\geq j_0$ and ${(AMB)^*}^j (1,-1)^t \notin 3\mathbb{Z}^2$ for all $j< j_0$.

Case I: By noting that $(1,-1)^t=-(-1,1)^t$ and ${(AMB)^*}^j(1,-1)^t\notin 3\mathbb{Z}^2$ for all $j\geq 1$,  we have
\begin{equation} \label{eq(4.3)}
{\tilde{M^*}}^j\left\{\left( {\begin{array}{*{20}{c}}
\frac{1}{3}\\
-\frac{1}{3}
\end{array}} \right) ,\left( {\begin{array}{*{20}{c}}
-\frac{1}{3}\\
\frac{1}{3}
\end{array}} \right)\right\}={(AMB)^*}^j\left\{\left( {\begin{array}{*{20}{c}}
\frac{1}{3}\\
-\frac{1}{3}
\end{array}} \right) ,\left( {\begin{array}{*{20}{c}}
-\frac{1}{3}\\
\frac{1}{3}
\end{array}} \right)\right\}\subset \mathring{E}_3^2  \pmod {\mathbb{Z}^2} ,\;\;\;j\geq 1.
\end{equation}
On the other hand, it follows from \eqref{eq(3.11.1)} that
\begin{equation}\label{eq(4.4)}
B^* \mathcal {Z}(m_D)=\mathcal {Z}(m_\mathcal {D})=
\left\{\left( {\begin{array}{*{20}{c}}
\frac{1}{3}\\
-\frac{1}{3}
\end{array}} \right) ,\left( {\begin{array}{*{20}{c}}
-\frac{1}{3}\\
\frac{1}{3}
\end{array}} \right)\right\}+\mathbb{Z}^2.
\end{equation}
Now we will show that there exist at most $9$   mutually
orthogonal exponential functions in $L^2(\mu_{M,D})$. Suppose on the contrary that there exists
$\Lambda=\{0\}\cup \{\lambda_i\}^9_{i=1}$ such that $E(\Lambda)$ is an orthogonal set of $L^2(\mu_{M, D})$. It follows from Proposition \ref{lem(4.3)}(i) that
\begin{align}\label{eq:4.19}
{M^*}^{j}\mathcal{Z}(m_D)\subset\frac{1}{(3l+1)^{j+1}}A^*\left({\tilde{M^*}}^{j}\mathcal {Z}(m_{\mathcal {D}})+\Bbb Z^2\right) \quad\mbox{for}\ j\geq 1,
\end{align}
where $\det(AB):=3l+1$ for some integer $l$. Thus  by \eqref{1.3} and \eqref{1.4}, we can let
\begin{align*}
\lambda_i=\frac{1}{(3l+1)^{j_i+1}}A^*\left({\tilde{M}}^{*j_i}\tilde{\lambda}_i+\nu_i\right),
\end{align*}
where $\tilde{\lambda}_i\in \mathcal {Z}(m_\mathcal {D})$, $\nu_i\in\Bbb Z^2$ and $j_i\geq 1$. Applying \eqref{eq(4.3)} and \eqref{eq(4.4)}, we have ${\tilde{M}}^{j_i}\tilde{\lambda}_i\in \mathring{E}_3^2  \pmod {\mathbb{Z}^2} $ for all $1\leq i\leq 9$. Hence there must exist $1\leq i_1, i_2\leq 9$ such that ${\tilde{M}}^{j_{i_1}}\tilde{\lambda}_{i_1}={\tilde{M}}^{j_{i_2}}\tilde{\lambda}_{i_2}\in \mathring{E}_3^2  \pmod {\mathbb{Z}^2} $.
The orthogonality of $\lambda_{i_1}$ and $\lambda_{i_2}$
implies  that  there exist $\tilde{\lambda}_{i_{12}}\in \mathcal {Z}(m_{\mathcal {D}})$ and an integer $j_{i_{12}}$ such that
\begin{align*}
\lambda_{i_1}-\lambda_{i_2}&=\frac{1}{(3l+1)^{j_{i_1}+1}}A^*\left({\tilde{M}}^{*j_{i_1}}\tilde{\lambda}_{i_1}+\nu_{i_1}\right)-
\frac{1}{(3l+1)^{j_{i_2}+1}}A^*\left({\tilde{M}}^{*j_{i_2}}\tilde{\lambda}_{i_2}+\nu_{i_2}\right)\\
&=\frac{1}{(3l+1)^{j_{i_{12}}+1}}A^*\left({\tilde{M}}^{*j_{i_{12}}}\tilde{\lambda}_{i_{12}}+\nu_{i_{12}}\right).
\end{align*}
Multiplying both sides of the above equation by $(3l+1)^{j_{i_1}+j_{i_2}+j_{i_{12}}+1}{A^*}^{-1}$, we have
\begin{align}\label{eq(4.6)}
&\quad(3l+1)^{j_{i_2}+j_{i_{12}}}\left({\tilde{M}}^{*j_{i_1}}\tilde{\lambda}_{i_1}+\nu_{i_1}\right)-
(3l+1)^{j_{i_1}+j_{i_{12}}}\left({\tilde{M}}^{*j_{i_2}}\tilde{\lambda}_{i_2}+\nu_{i_2}\right)\nonumber\\
&=(3l+1)^{j_{i_1}+j_{i_2}}\left({\tilde{M}}^{*j_{i_{12}}}\tilde{\lambda}_{i_{12}}+\nu_{i_{12}}\right).
\end{align}
It is easy to see that the left side of \eqref{eq(4.6)} is an integer  vector of $\mathbb{Z}^2$  since ${\tilde{M}}^{*j_{i_1}}\tilde{\lambda}_{i_1}={\tilde{M}}^{*j_{i_2}}\tilde{\lambda}_{i_2}\in \mathring{E}_3^2  \pmod {\mathbb{Z}^2}$. However,  ${\tilde{M}}^{*j}\mathcal {Z}(m_{\mathcal {D}})\subset \mathring{E}_3^2  \pmod {\mathbb{Z}^2}$  for all $j\geq 1$ implies that the right side of \eqref{eq(4.6)} cannot be an integer vector of $\mathbb{Z}^2$.
This contradiction shows that there are at most $9$   mutually
orthogonal exponential functions in $L^2(\mu_{M,D})$.

We now prove that  $\mu_{M,D}$ is a  non-spectral measure if $M$ belongs to Case II, i.e., there exists $j_0\geq 2$ such that
${\tilde{M}}^{*j} (1,-1)^t \in 3\mathbb{Z}^2$ for all $j\geq j_0$ and ${\tilde{M}}^{*j} (1,-1)^t \notin 3\mathbb{Z}^2$ for all $j< j_0$. Hence, \eqref{eq(4.4)} gives that
\begin{align}\label{eq?4.21.0}
{\tilde{M}}^{*j}\mathcal {Z}(m_\mathcal {D})\subset \Bbb Z^2\;\;\mbox{for}\;j\geq j_0\;\mbox{and}\;\;{\tilde{M}}^{*j}\mathcal {Z}(m_\mathcal {D}) \subset\mathring{E}_3^2+ {\mathbb{Z}^2} \;\;\mbox{for}\;j< j_0.
\end{align}
Let $L:=(3l+1)^{j_0+1}$, it follows from \eqref{eq:4.19}  that
\begin{align*}
L\bigcup_{i=1}^{j_0-1}{M^*}^i\mathcal {Z}(m_D)\subset \bigcup_{i=1}^{j_0-1}(3l+1)^{j_0-i}A^*\left({\tilde{M}}^{*i}\mathcal {Z}(m_\mathcal {D})+\Bbb Z^2\right)
\end{align*}
and
\begin{align*}
L{M^*}^j\mathcal {Z}(m_D)&=L{M^*}^{j-j_0}{M^*}^{j_0}\mathcal {Z}(m_D)\subset L{M^*}^{j-j_0} \frac{1}{(3l+1)^{j_0+1}}A^*\left({\tilde{M}}^{*j_0}\mathcal {Z}(m_\mathcal {D})+\Bbb Z^n\right)\\
&= {M^*}^{j-j_0}A^*\left({\tilde{M}}^{*j_0}\mathcal {Z}(m_\mathcal {D})+\Bbb Z^n\right)\;\;\mbox{for}\;\;j\geq j_0.
\end{align*}
Combining this with \eqref{eq?4.21.0} and $A\in GL_2(3)$,  we conclude that
\begin{align}\label{eq:4.23}
L\bigcup_{i=1}^{j_0-1}{M^*}^i\mathcal {Z}(m_D)\cap \Bbb Z^2=\emptyset\;\;\mbox{and}\;\;L{M^*}^j\mathcal {Z}(m_D)\subset \Bbb Z^2\;\; \mbox{for}\;\; j\geq j_0.
\end{align}
 Also, it follows from $3l+1=\det(AB)$ and \eqref{eq(4.4)} that $$
L\mathcal {Z}(m_D)=(3l+1)^{j_0+1}\mathcal {Z}(m_D)=(3l+1)^{j_0}\det(AB){B^*}^{-1}\mathcal {Z}(m_\mathcal {D})=\mathscr{M}\mathcal {Z}(m_\mathcal {D}),
$$
where $\mathscr{M}:=(3l+1)^{j_0}\det(AB){B^*}^{-1}$. Since $\mathscr{M}$ is an integer matrix and $(\mathcal {Z}(m_\mathcal {D})-\mathcal {Z}(m_\mathcal {D}))\setminus \mathbb{Z}^n =\mathcal {Z}(m_\mathcal {D})$, we obtain that
\begin{align}\label{eq(4.15.1)} \nonumber
L(\mathcal {Z}(m_D)-\mathcal {Z}(m_D))\setminus \mathbb{Z}^n
&=(\mathscr{M}\mathcal {Z}(m_\mathcal {D})-\mathscr{M}\mathcal {Z}(m_\mathcal {D}))\setminus \mathbb{Z}^n  \\ \nonumber
&\subset \mathscr{M}\left((\mathcal {Z}(m_\mathcal {D})-\mathcal {Z}(m_\mathcal {D}))\setminus\mathbb{Z}^n  \right)\\
&=\mathscr{M}\mathcal {Z}(m_\mathcal {D})=L\mathcal {Z}(m_D).
\end{align}
Together with  \eqref{eq:4.23}, it yields that the pair $(M,D)$ satisfies the conditions of Theorem \ref{th(4.3.1)}. Hence $\mu_{M.D}$ is a non-spectral measure.

This  completes  the proof of Theorem \ref{th(4.4)}.
\end{proof}

Now we will give an example to indicate that Theorem \ref{th(4.4)} may not hold for $\det(B)\in 3\mathbb{Z}$.
Let the integer matrix $M$ and the integer digit set $D$ be defined as follows:
\begin{equation}\label{4.15}
M=\begin{bmatrix}
0 & 3b+1\\
3a& 0 \end{bmatrix}\quad  {\rm {and}}\quad
D=\left\{\left( {\begin{array}{*{20}{c}}
0\\
0
\end{array}} \right), \left( {\begin{array}{*{20}{c}}
1\\
0
\end{array}} \right),\left( {\begin{array}{*{20}{c}}
2\\
9
\end{array}} \right)\right\},
\end{equation}
 where $a,b\geq 3$.
By a direct calculation, we have
\begin{equation}\label{4.16}
\mathcal{Z}(m_D)=\mathcal{Z}_1\cup \mathcal{Z}_2,
\end{equation}
where
\begin{gather*}
\mathcal{ Z}_1=\left\{
\begin{pmatrix}
\frac{1}{3}+k_1 \\ \frac{\ell_1}{9}+k_2
\end{pmatrix}
:0\leq\ell_1\leq8,k_1,k_2\in\mathbb{Z}
\right\}\;\;\mbox{and}\;\;
\mathcal{ Z}_2=\left\{
\begin{pmatrix}
\frac{2}{3}+k_1^\prime \\ \frac{\ell_2}{9}+k_2^\prime
\end{pmatrix}
:0\leq\ell_2\leq8,k_1^\prime,k_2^\prime\in\mathbb{Z}
\right\}.
\end{gather*}
It is easy to check that $M^*\mathcal{Z}(m_D) \cap \mathbb{Z}^2=\emptyset$, which implies that $(M, D)$ is not admissible. However, we can show that $\mu_{M,D}$ is a spectral measure.

\begin{ex} \label{th(4.7)}
Let $\mu_{M,D}$ be defined by \eqref{eq(1.1)}, where $M,D$ are  given in \eqref{4.15}. Then $\mu_{M,D}$ is a spectral measure.
\end{ex}

\begin{proof}
Let $C=\{(0, 0)^t, (\frac{1}{3}, 0)^t, (\frac{2}{3}, 0)^t\}$, and denote
\begin{equation}\label{eq:def_Lmd} \Lambda_n=\sum_{i=1}^{n} {M^*}^iC \quad{\rm and} \quad \Lambda=\bigcup_{n=1}^{\infty} \Lambda_n.
\end{equation}
We will prove that
$\Lambda$  is a spectrum of $\mu_{M,D}$.

Firstly, we show that there  exist  $c>0$ and $\eta>0$ such that
\begin{align}\label{4.19}
|\hat{\mu}_{M, D}(\xi)| \geq c
\ {\rm for \ each} \  \xi \in T_{\eta}:=\{x: {\rm{dist}}(x, T)\leq \eta\},
\end{align}
where $T=T(M,C):=\sum_{i=1}^\infty M^{-i}C.$ A direct calculation shows that
\begin{equation}\label{4.18}
M^{-i}=\left\{ {\begin{array}{*{20}{c}}
\left(\frac{1}{3a(3b+1)}\right)^{\frac{i}{2}}\begin{bmatrix}
1 & 0\\
0& 1 \end{bmatrix} \ \ \ \ \ \ \ \ \ \ \ \ \ \ \ \ \ \   {\rm if } \  i \ {\rm is \  even},  \\
\left(\frac{1}{3a(3b+1)}\right)^{\frac{i-1}{2}}\begin{bmatrix}
0 & \frac{1}{3a}\\
\frac{1}{3b+1}& 0 \end{bmatrix}\ \ \ \ \ \ \ \ \ \ \ \   {\rm if } \  i \ {\rm is \  odd}.
\end{array}}
\right.
\end{equation}
By  $a ,b\geq3$ and \eqref{4.16}, we have $T\subset [0,\frac{2}{3}\sum_{i=1}^\infty\frac{1}{9}]^2=[0,\frac{1}{12}]^2$ and ${\rm{dist}}(T,\mathcal{Z}(m_D))>0$. Let $\eta:={\rm{dist}}(T,\mathcal{Z}(m_D))/2$, recall that $T_{\eta}=\{x: {\rm{dist}}(x, T)\leq \eta\}$, then ${\rm{dist}}(T_\eta,\mathcal{Z}(m_D))\geq \eta>0$.
Combining this with the continuous of the function $|m_D(x)|$, we have
\begin{align}\label{eq:4.23.1}
\beta:=\inf\{|m_D(x)|: x \in T_{\eta}\}> 0.
\end{align}

 Let $\xi\in T_{\eta}$,
then  by the definition of $T_{\eta}$, there exists $\xi_0\in T$ such that ${\rm{dist}}(\xi, \xi_0)\leq \eta$.
Noting that $0\in C$, by the definition of $T$,
we can obtain $M^{-i} T\subset T$ for each $i\geq 1$.  Consequently, $M^{-i}\xi_0\in  T$. Since $M$ is an expanding matrix,  it follows  that
$$
{\rm{dist}}(M^{-i}\xi, T(M, C) )\leq {\rm{dist}}(M^{-i}\xi, M^{-i}\xi_0)\leq{\rm{dist}(\xi, \xi_0)}\leq \eta.
$$
This proves $M^{-i}\xi\in T_{\eta}$. Then using \eqref{eq:4.23.1}, we get
\begin{align}\label{4.20}
|m_D (M^{-i}\xi)|\geq \beta  \ {\rm for \ all} \ i\in\mathbb{N} \ {\rm and} \   \xi\in T_{\eta}.
\end{align}
Because $T_{\eta}$ is a bounded set and $M$ is an expanding matrix, it is clear that $\lim\limits_{i\to\infty}\sup\{| M^{-i}\xi|:\xi\in T_\eta\}=0$. Combining this with the facts $|\hat{\mu}_{M, D}(0)|=1$ and $\hat{\mu}_{M, D}(\zeta)$ is continuous in $\mathbb{R}^2$, we can find an integer $i_0$ such that
$$|\hat{\mu}_{M, D}(M^{-i}\xi)|\geq \frac{1}{2}\;\; \mbox{for}\;\; \xi\in T_{\eta}\;\;\mbox{and}\;\; i\geq i_0.
$$
Thus it follows from \eqref{4.20} that
\begin{align*}
|\hat{\mu}_{M, D}(\xi)|&=\left|\prod_{i=1}^\infty m_D (M^{-i}\xi)\right|=\left|\prod_{i=1}^{i_0} m_D(M^{-i}\xi)\right|\left|\prod_{i=i_0+1}^\infty m_D(M^{-i}\xi)\right|\\
&=|\hat{\mu}_{M, D}(M^{-i_0}\xi)|\prod_{i=1}^{i_0} |m_D (M^{-i}\xi)|\geq\frac{ \beta^{i_0}}{2}:=c.
\end{align*}
Therefore, \eqref{4.19} holds.

In the following proof, we write
\begin{equation}\label{4.21}
\mu_n=\delta_{M^{-1}D}*\cdots*\delta_{M^{-n}D}  \ \ \ \ {\rm {and}}\  \ \ \
\mu_{>n}=\delta_{M^{-(n+1)}D}*\delta_{M^{-(n+2)}D}*\cdots.
\end{equation}
Then $\mu_{M,D}=\mu_n*\mu_{>n}$.

Secondly,  we claim that $\Lambda_n=\sum_{i=1}^{n} {M^*}^iC$  is  a  spectrum  of $\mu_n$. In fact, since the cardinality of $\Lambda_n$  is equivalent to the  dimension of $L^2(\mu_n)$, we only need to show that $E(\Lambda_n)$ is an orthogonal set of $L^2(\mu_n)$. Recall that
$C=\{(0,0)^t,(\frac{1}{3},0)^t, (\frac{2}{3},0)^t \}$. Thus for any  $\lambda,\; \lambda^\prime\in \Lambda_n$ with $\lambda\neq \lambda^\prime$, we have $\lambda=\sum_{i=1}^{n}M^{*i}\ell_i$ and $\lambda^\prime=\sum_{i=1}^{n}M^{*i}\ell_i^\prime$, where $\ell_i,\ell_i^\prime\in C$ for all $1\leq i\leq n$. Let $\kappa$ ($1\leq\kappa\leq n$) be the first index satisfying $\ell_{\kappa}\neq \ell_{\kappa}^\prime$, then
\begin{align}\label{4.23}
\lambda-\lambda^\prime&= {M^*}^{\kappa} \big((\ell_{\kappa}-\ell_{\kappa}^\prime)+
 M^* (\ell_{\kappa+1}-\ell_{\kappa+1}^\prime)+
\sum_{i=\kappa+2}^{n} {M^*}^{i-\kappa} (\ell_i-\ell_i^\prime)\big).
\end{align}
Noting that  $M=\begin{bmatrix}
0 & 3b+1\\
3a& 0 \end{bmatrix}$  and $\ell_{\kappa}\neq \ell_{\kappa}^\prime$, we have $\ell_{\kappa}-\ell_{\kappa}^\prime\in\mathcal{ Z}(m_D)$ (see \eqref{4.16}) and
$$ M^* (\ell_{\kappa+1}-\ell_{\kappa+1}^\prime)\in\left\{ {\left( {\begin{array}{*{20}{c}}
0\\
0
\end{array}} \right),\left( {\begin{array}{*{20}{c}}
0\\
\frac{1}{3}
\end{array}} \right),\left( {\begin{array}{*{20}{c}}
0\\
\frac{2}{3}
\end{array}} \right)} \right\}+\mathbb{Z}^2.$$
Thus $(\ell_{\kappa}-\ell_{\kappa}^\prime)+ M^* (\ell_{\kappa+1}-\ell_{\kappa+1}^\prime)\in\mathcal{ Z}(m_D)$ and ${M^*}^{i-\kappa} (\ell_{i}-\ell_{i}^\prime)\in\mathbb{Z}^2$ for $i\geq \kappa+2$.  Therefore,  we deduce from \eqref{4.23} and \eqref{4.16} that
\begin{align}\label{4.24}
\lambda-\lambda^\prime\in  {M^*}^\kappa (\mathcal{ Z}(m_D)+\mathbb{Z}^2)= {M^*}^\kappa \mathcal{Z}(m_D)\subset \bigcup_{i=1}^{n} {M^*}^i \mathcal{Z}(m_D)=\mathcal{ Z}(\hat{\mu}_n).
\end{align}
This implies that $E(\Lambda_n)$ is an orthogonal set of $L^2(\mu_n)$. Hence the claim follows.

Finally, we prove that $\Lambda=\bigcup_{n=1}^{\infty} \Lambda_n$ is a spectrum of $\mu_{M,D}$. For any $\lambda,\lambda^\prime\in\Lambda$ with $\lambda\neq \lambda^\prime$, there exists an integer $n$ such that  $\lambda,\lambda^\prime\in \Lambda_n$.  As $\mu_{M,D}=\mu_n*\mu_{>n}$, it follows that $\mathcal{Z}(\hat{\mu}_n)\subset\mathcal{ Z}(\hat{\mu}_{M,D})$. This together with \eqref{4.24} yields that $\lambda-\lambda^\prime\in\mathcal{ Z}(\hat{\mu}_{M,D})$. Hence, $E(\Lambda)$ is an orthogonal set of $L^2(\mu_{M,D})$.

Let
$$
Q_n(\xi)=\sum_{\lambda\in\Lambda_n}|\hat{\mu}_{n}(\xi+\lambda)|^2
\quad \text{and} \quad Q(\xi)=\sum_{\lambda\in\Lambda}|\hat{\mu}_{M, D}(\xi+\lambda)|^2.
$$
In order to show that $\Lambda$ is  a spectrum of $\mu_{M,D}$, we only need to prove $Q(\xi)\equiv 1$ for all $\xi\in \mathbb{R}^2$ by Lemma 3.3 of \cite{Jorgensen-Pedersen_1998}. Since $Q$ is an entire function, it is sufficient to determine the value of $Q(\xi)$ for $|\xi|\leq \eta={\rm{dist}}(T,\mathcal{Z}(m_D))/2$.  For any $n\in \mathbb{N}$,
by \eqref{1.2} and the definition of $\mu_n$,
we have the following equation:
\begin{align}\label{eq:Q2n}
Q_{2n}(\xi)&=Q_{n}(\xi)+\sum_{\lambda\in\Lambda_{2n}\setminus\Lambda_{n}}| \hat{\mu}_{M,D} (\xi+\lambda)|^2\nonumber\\
&=Q_{n}(\xi)+\sum_{\lambda\in\Lambda_{2n}\setminus\Lambda_{n}}| \hat{\mu}_{2n} (\xi+\lambda)|^2
| \hat{\mu}_{M,D} ({M^{-2n}}(\xi+\lambda))|^2.
\end{align}
Obviously,  $M^{-2n}\lambda\in T$ provided that $\lambda\in\Lambda_{2n}$.
This implies that ${\rm{dist}}({M^{-2n}}(\xi+\lambda),T)
\leq |{M^{-2n}}(\xi)| \leq \eta$ for $|\xi|\leq \eta$, and then ${M^{-2n}}(\xi+\lambda)\in T_\eta$.
By \eqref{4.19} and the fact $\Lambda_{2n}$ is a spectrum of $\mu_{2n}$, the equation \eqref{eq:Q2n} can be changed to
\begin{align}\label{4.25}
Q_{2n}(\xi)&\geq Q_{n}(\xi)+c^2\sum_{\lambda\in\Lambda_{2n}\setminus\Lambda_{n}}| \hat{\mu}_{2n}(\xi+\lambda)|^2 \nonumber \\
&=Q_{n}(\xi)+c^2(1-\sum_{\lambda\in\Lambda_{n}}|\hat{\mu}_{2n} (\xi+\lambda)|^2).
\end{align}
For any $\lambda\in\Lambda_{n}$ and $|\xi|\leq \eta<1$,
it follows from \eqref{eq:def_Lmd} and \eqref{4.18} that
\begin{align*}
|{M^{-2n}}(\xi+\lambda)|\leq 3^{-2n}
\left(1+\frac{2(3+3^2+\cdots +3^n)\sqrt{2}}{3}\right)
< 3^{-(n-1)}.
\end{align*}
Define $l_n=\min_{|\xi|\leq 3^{-n}}|\hat{\mu}_{M, D}(\xi)|$ for $n\in \mathbb{N}$.
It follows that $\lim_{n\rightarrow\infty}l_{n} =1$.
Hence we can obtain that
\begin{align}\label{4.26}
|\hat{\mu}_{M,D}(\xi+\lambda)|=|\hat{\mu}_{2n}(\xi+\lambda)||\mu_{M, D}
({M^{-2n}}(\xi+\lambda))|\geq l_{n-1}|\hat{\mu}_{2n}(\xi+\lambda)|.
\end{align}
According to \eqref{4.25} and \eqref{4.26}, we get
\begin{align*}
Q_{2n}(\xi)&\geq Q_{n}(\xi)+c^2(1-\sum_{\lambda\in\Lambda_{n}}|\hat{\mu}_{2n}(\xi+\lambda)|^2)\\
&\geq Q_{n}(\xi)+c^2(1-\frac{1}{l_{n-1}}\sum_{\lambda\in\Lambda_{n}}|\hat{\mu}_{M,D}(\xi+\lambda)|^2)\\
&= Q_{n}(\xi)+c^2(1-\frac{1}{l_{n-1}}Q_{n}(\xi)).
\end{align*}
Letting $n\rightarrow \infty$,
we can obtain that $Q(\xi)\geq Q(\xi)+c^2(1-Q(\xi))$.
Since $E(\Lambda)$ is an orthonormal set of $L^2(\mu_{M,D})$, by Lemma 3.3 of \cite{Jorgensen-Pedersen_1998}, we have $Q(\xi)\leq 1$ for every point $\xi\in\mathbb{R}^2$.
This implies that $Q(\xi)\equiv 1$ for $|\xi|\leq\eta$, and hence $\mu_{M,D}$ is a spectral measure by Lemma 3.3 of \cite{Jorgensen-Pedersen_1998} again.
\end{proof}

\end{document}